\numberwithin{equation}{section}
\newcommand{\C}{{\mathbf C}}
\newcommand{\Z}{{\mathbf Z}}
\newcommand{\B}{{\mathbf{B}}}
\newcommand{\A}{{\mathbf A}}
\newcommand{\F}{{\mathbf F}}
\newcommand{\V}{{\mathbf V}}
\newcommand{\gl}{{\mathfrak{gl}}}
\newcommand{\q}{{\mathfrak{q}}}
\newcommand{\seteq}{\mathbin{:=}}
\newcommand{\uqqn}{U_q(\mathfrak{q}(n))}
\newcommand{\uqn}{U(\mathfrak{q}(n))}
\theoremstyle{plain}
\newtheorem{lemma}{Lemma}[section]
\newtheorem{prop}[lemma]{Proposition}
\newtheorem{theorem}[lemma]{Theorem}
\newcommand{\Prop}{\begin{prop}}
\newcommand{\enprop}{\end{prop}}
\newcommand{\Lemma}{\begin{lemma}}
\newcommand{\enlemma}{\end{lemma}}
\newcommand{\Th}{\begin{theorem}}
\newcommand{\enth}{\end{theorem}}
\newtheorem{corollary}[lemma]{Corollary}
\newcommand{\Cor}{\begin{corollary}}
\newcommand{\encor}{\end{corollary}}
\newtheorem{definition}[lemma]{Definition}
\newcommand{\Def}{\begin{definition}}
\newcommand{\edf}{\end{definition}}
\theoremstyle{definition}
\newtheorem{remark}[lemma]{Remark}
\newtheorem{example}[lemma]{Example}
\newcommand{\g}{{\mathfrak{g}}}
\newcommand{\Uq}{{U_q(\mathfrak{q}(n))}}
\newcommand{\qn}{{\mathfrak{q}(n)}}
\newcommand{\End}{\operatorname{End}}
\newcommand{\isoto}[1][]{\mathop{\xrightarrow[#1]%
{{\raisebox{-.6ex}[0ex][-.6ex]{$\mspace{2mu}\sim\mspace{2mu}$}}}}}
\newcommand{\tensor}{\otimes}
\newcommand{\eq}{\begin{eqnarray}}
\newcommand{\eneq}{\end{eqnarray}}
\newcommand{\eqn}{\begin{eqnarray*}}
\newcommand{\eneqn}{\end{eqnarray*}}
\newcommand{\on}{\operatorname}
\newcommand{\Ker}{\on{Ker}}
\newcommand{\bni}{\be[{\rm(i)}]}
\newcommand{\bna}{\be[{\rm(a)}]}
\newcommand{\QED}{\end{proof}}
\newcommand{\Proof}{\begin{proof}}
\newcommand{\soplus}{\mathop{\mbox{\normalsize$\bigoplus$}}\limits}
\newcommand{\cl}{\colon}
\newcommand{\id}{\on{id}}
\newcommand{\ba}{\begin{array}}
\newcommand{\ea}{\end{array}}
\newcommand{\bi}{\begin{enumerate}[{\rm(i)}]}
\newcommand{\mono}{\rightarrowtail}
\newcommand{\set}[2]{\left\{#1 \mathbin{;} #2 \right\}}
\newcommand{\Mod}{\operatorname{Mod}}
\newcommand{\SMod}{\operatorname{S-Mod}}
\newcommand{\hs}{\hspace*}
\newcommand{\eqsub}{\begin{subequations}\begin{eqnarray}}
\newcommand{\eneqsub}{\end{eqnarray}\end{subequations}}
\newcommand{\ol}{\overline}
\newcommand{\nc}{\newcommand}
\nc{\la}{\lambda}
\nc{\lam}{\lambda}
\nc{\U}[1][\g]{U_q(#1)}
\nc{\te}{\tilde{e}}
\nc{\tei}{\tilde{e}_i}
\nc{\tf}{\tilde{f}}
\nc{\tfi}{\tilde{f}_i}
\nc{\tU}{\widetilde U_q(\g)}
\nc{\tE}{\tilde{E}}
\nc{\tF}{\tilde{F}}
\nc{\tk}{\tilde{k}}
\nc{\tkone}{\tk_{\ol{1}}}
\nc{\teone}{\tilde{e}_{\ol{1}}}
\nc{\tfone}{\tilde{f}_{\ol{1}}}
\nc{\teibar}{\tilde{e}_{\ol{i}}} \nc{\tfibar}{\tilde{f}_{\ol{i}}}
\nc{\tki}{{\tk}_{\ol {i}}}
\nc{\BZ}{{\mathbb{Z}}}
\nc{\al}{\alpha}
\nc{\qs}{{q}}
\nc{\lan}{\langle}
\nc{\ran}{\rangle}
\nc{\re}{{\mathrm{re}}}
\nc{\wt}{\operatorname{wt}}
\nc{\ch}{\operatorname{ch}}
\nc{\Uf}[1][\g]{U^-_q(#1)}
\nc{\Ue}{U^+_q(\g)}
\nc{\eps}{\varepsilon}
\nc{\vphi}{\varphi}
\nc{\sphi}{\varphi^*}
\nc{\seps}{\varepsilon^*}
\nc{\nn}{\nonumber}
\def\max{{\mathop{\mathrm{max}}}}
\nc{\vp}{\varpi}
\nc{\cls}{{\operatorname{cl}}}
\nc{\Wt}{{\operatorname{Wt}}}
\nc{\Us}{U'_q(\g)}
\nc{\La}{\Lambda}
\nc{\ro}{{\rm(}}
\nc{\rf}{{\rm)}}
\nc{\norm}{{\mathrm{norm}}}
\nc{\qbox}{\quad\mbox}
\nc{\braid}{{\mathfrak{B}}}
\nc{\Ad}{\operatorname{Ad}}
\nc{\Aut}{\operatorname{Aut}}
\nc{\dt}[1]{\tilde{\tilde #1}}
\nc{\Sn}{S^{{\mathrm{norm}}}}
\nc{\aff}{{\mathrm{aff}}}
\nc{\rk}{{\mathrm{rk}}}
\nc{\tQ}{\widetilde{Q}}
\nc{\tP}{\widetilde{P}}
\nc{\tW}{\widetilde{W}}
\nc{\Dyn}{\mathrm{Dyn}}
\nc{\tD}{\widetilde{\Delta}}
\nc{\height}{{\operatorname{ht}}}
\nc{\bl}{\bigl}
\nc{\br}{\bigr}
\nc{\Hecke}{\mathrm{H}}
\nc{\HA}{\Hecke^{\mathrm{A}}}
\nc{\HB}{\Hecke^{\mathrm{B}}}
\nc{\K}{\mathrm{K}}
\newcommand{\scbul}{{\,\raise1pt\hbox{$\scriptscriptstyle\bullet$}\,}}
\nc{\vac}{{\phi}}
\nc{\Bt}{\B_\theta(\g)}
\nc{\be}{\begin{enumerate}}
\nc{\ee}{\end{enumerate}}
\nc{\low}{{\mathrm{low}}}
\nc{\upper}{{\mathrm{up}}}
\nc{\Zodd}{\Z_{\mathrm{odd}}}
\nc{\Ft}[1][n]{\mathbb{P}\mathrm{ol}_{#1}}
\nc{\Ftf}[1][n]{\widetilde{\mathbb{P}\mathrm{ol}}_{#1}}
\nc{\KA}{\on{K}^{\mathrm{A}}}
\nc{\KB}{\on{K}^{\mathrm{B}}}
\nc{\Res}{\on{Res}}
\nc{\Fc}[1][{n,m}]{\mathbf{F}_{#1}}
\nc{\tphi}{\tilde{\varphi}}
\nc{\CO}{\mathscr{O}}
\nc{\inte}{\mathrm{int}}
\nc{\Oint}{\mathcal{O}^{\ge0}_{\inte}}
\nc{\vs}{\vspace}
\nc{\tL}{\widetilde{L}}
\nc{\noi}{\noindent}
\newenvironment{rouge}
{\color{red}}
{}
\nc{\bred}{\begin{rouge}}
\nc{\ered}{\end{rouge}}
\newlength{\mylength}
\title[Crystal bases for the quantum queer superalgebra]
{Crystal bases for the quantum queer superalgebra}
\author[D. Grantcharov, J. H. Jung, S.-J. Kang, M. Kashiwara, M. Kim]{Dimitar Grantcharov$^{1}$, Ji Hye Jung$^{2}$, Seok-Jin
Kang$^{3}$, \\ Masaki Kashiwara$^{4}$, Myungho Kim$^{5}$}
\address{Department of Mathematics \\
         University of Texas at Arlington \\ Arlington, TX 76021, USA}
         \email{grandim@uta.edu}
\address{Department of Mathematical Sciences
         and
         Research Institute of Mathematics \\
         Seoul National University \\ Seoul 151-747, Korea}
         \email{jhjung@math.snu.ac.kr}
\address{Department of Mathematical Sciences
         and
         Research Institute of Mathematics \\
         Seoul National University \\ Seoul 151-747, Korea}
         \email{sjkang@math.snu.ac.kr}
\address{Research Institute for Mathematical Sciences \\
          Kyoto University \\ Kyoto 606-8502, Japan \\
          \& Department of Mathematical Sciences
         and
         Research Institute of Mathematics \\
         Seoul National University \\ Seoul 151-747, Korea}
         \email{masaki@kurims.kyoto-u.ac.jp}
\address{Department of Mathematical Sciences
         and
         Research Institute of Mathematics \\
         Seoul National University \\ Seoul 151-747, Korea}
         \email{mkim@math.snu.ac.kr}
\thanks{$^{1}$This work was partially supported by NSA grant H98230-10-1-0207 and by Max Planck Institute for Mathematics, Bonn.}
\thanks{$^{2}$This work was partially supported by BK21 Mathematical Sciences Division and by NRF Grant \# 2010-0010753.}
\thanks{$^{3}$This work was partially supported by KRF Grant \# 2007-341-C00001, by National Institute for Mathematical Sciences (2010 Thematic Program, TP1004), and by NRF Grant \# 2010-0019516.}
\thanks{$^{4}$This work was partially supported by Grant-in-Aid for Scientific Research (B) 23340005,
Japan Society for the Promotion of Science.}
\thanks{$^{5}$This work was partially supported by KRF Grant \# 2007-341-C00001, and by NRF Grant \# 2010-0019516.}
\keywords{quantum queer superalgebras, crystal bases, odd Kashiwara operators}
\subjclass[2000]{17B37, 81R50}
\begin{document}

\maketitle

\begin{abstract}
In this paper, we develop the crystal basis theory for the quantum
queer superalgebra $\Uq$. We define the notion of crystal bases and
prove the tensor product rule for $\Uq$-modules in the category
$\Oint$. Our main theorem shows that every
$\Uq$-module in the category $\Oint$ has a
unique crystal basis.

\end{abstract}

\section*{Introduction}

For the past 30 years, one of the most striking and influential
developments in combinatorial representation theory would be the
discovery of crystal bases for quantum groups and their
representations \cite{Kas90, Kas91}. Right after its discovery,
the crystal basis theory has attracted a lot of attention and
research activities because it has simple and explicit
combinatorial features and have many significant applications to a
wide variety of mathematical and physical theories. In particular,
crystal bases have an extremely nice behavior with respect to
tensor products, which leads to natural and exciting connections
with combinatorics of Young tableaux and Young walls
(\cite{Ka2003, KK08, KN, MM, N93}). Moreover, inspired by the
original works \cite{Kas90, Kas91, Kas93, Kas94}, many important
and deep results have been established for crystal bases for
quantum groups associated with symmetrizable Kac-Moody algebras
(see, for example, \cite{HK2002, JMMO, KMN1, KMN2, KS97, Li1, Li2,
Naka02, Sai}).
In \cite{Lus90, Lus91}, Lusztig provided a geometric approach to this subject.

On the other hand, not much has been known about crystal bases for
quantum groups corresponding to Lie superalgebras. A major difficulty
one encounters in the superalgebra case is the fact that the
category of finite-dimensional representations is in general not
semisimple. Fortunately, there is an interesting and natural
category of finite-dimensional $U_q(\mathfrak{g})$-modules which is
semisimple for the two super-analogues of the general linear Lie
algebra $\mathfrak{gl} (n)$: $\mathfrak{g}  = \mathfrak{gl} (m | n)$
and $\mathfrak{g} = \mathfrak{q} (n)$. This is the category
$\Oint$ of so-called {\it tensor modules};
i.e., those that appear as submodules of tensor powers ${\bf
V}^{\otimes N}$ of the natural $U_q(\mathfrak{g})$-module ${\bf V}$.
The semisimplicity
of $\Oint$ is verified  in
\cite{BKK} for the general linear Lie superalgebra $\mathfrak{g} =
\mathfrak{gl} (m|n)$ and in \cite{GJKK} for the queer Lie
superalgebra $\mathfrak{g} = \mathfrak{q}(n)$.

Furthermore, the
crystal basis theory of $\Oint$ for
$\mathfrak{g}= \mathfrak{gl} (m |n)$ was developed in \cite{BKK},
while the foundations of the highest weight representation theory of
$U_q (\qn)$ have been established in \cite{GJKK}.

In this paper, we develop the crystal basis theory for $\Uq$-modules
in the category $\Oint$. The (quantum) queer
superalgebra is interesting not only as the remaining case for which
$\Oint$ is semisimple, but also due to its
remarkable combinatorial properties. An example of such properties is
the queer analogue of the celebrated {\it Schur-Weyl duality}, often
referred to as {\it Schur-Weyl-Sergeev duality}, which was obtained
in \cite{Ser} for $U (\qn)$ and in \cite{Ol} for $U_q (\qn)$.

Being very interesting on the one hand, the representation theory of
(quantum) queer superalgebra faces numerous challenges on the other.
The queer Lie superalgebra is the only classical Lie superalgebra
whose Cartan subsuperalgebra  has a nontrivial odd part. As a
result, the highest weight space of any finite-dimensional
$\qn$-module has a structure of a Clifford module and the
corresponding  $\mathfrak{gl} (n)$-module appears with multiplicity
higher than one (in fact, a power of $2$).  Also, as observed in
\cite{GJKK}, due to the different classification of Clifford modules
over $\C$ and $\C (q)$,  the classical limit of an irreducible
highest weight $U_q (\qn)$-module is an irreducible highest weight
$U(\qn)$-module or a direct sum of two irreducible highest weight
$U(\qn)$-modules.  On top of these and in contrast to the case of
$\mathfrak{gl} (m|n)$, the odd root generators $e_{\ol{i}}$ and
$f_{\ol{i}}$ of $U_q (\qn)$ are not nilpotent.

We overcome the  challenges described  above in several steps.
First, we set the ground field to be the field $\C ((q))$ of formal
Laurent power series. By enlarging the base field, we obtain an
equivalence of the two categories of Clifford modules, and in
particular, establish a standard version of the classical limit
theorem. As the next step, we introduce the  {\it odd Kashiwara
operators} $\tilde{e}_{\ol{1}}$, $\tilde{f}_{\ol{1}}$, and
$\tilde{k}_{\ol{1}}$, where $\tilde{k}_{\ol{1}}$ corresponds to an
odd element in the Cartan subsuperalgebra of $\qn$. The definitions
of $\tilde{e}_{\ol{1}}$, $\tilde{f}_{\ol{1}}$ are new in the sense
that they are based solely on the comultiplication formulas for
$e_{\ol 1}$, $f_{\ol{1}}$ and lead to nilpotent operators on $L/qL$,
where $L$ is a crystal lattice. Furthermore, from these definitions,
we deduce a special {\it tensor product rule} for odd Kashiwara
operators.

Our definition of a {\it crystal basis} for a $U_q(\qn)$-module $M$
in the category $\Oint$ is also new: such a
basis is a triple $(L,B,(l_b)_{b \in B})$, where the crystal lattice
$L$ is a free $\C[[q]]$-submodule of $M$, $B$ is a finite
$\mathfrak{gl} (n)$-crystal,  $(l_b)_{b \in B}$ is a family of
nonzero vector spaces such that $L / qL = \soplus_{b \in B} l_b$,
with a set of compatibility conditions for the action of the
Kashiwara operators imposed in addition. The definition of crystal
bases leads naturally to the notion of  {\it abstract
$\qn$-crystals}, an example of which is the
$\mathfrak{gl}(n)$-crystal $B$ in any crystal basis $(L,B,(l_b)_{b
\in B})$. The modified notion of crystals allows us to consider the
multiple occurrence of $\mathfrak{gl} (n)$-crystals corresponding to
a highest weight $U_q(\qn)$-module $M$ in $\Oint$  as a single $\qn$-crystal.

As a result of this new setting,  the existence and uniqueness
theorem for crystal bases is proved for any highest weight (not
necessarily irreducible) module $M$ in the category $\Oint$.
Moreover, the $\qn$-crystal $B$ of $M$ depends
only on the highest weight $\lambda$ of $M$ and hence we may write
$B = B(\lambda)$. In addition to the existence and uniqueness
theorem, the decompositions of the module ${\bf V} \otimes M$ and
the crystal ${\bf B} \otimes B(\lambda)$ are established, where
${\bf B}$ is the crystal of ${\bf V}$. These decompositions are
parametrized by the set of all $\lambda + \varepsilon_j$ such that
$\lambda + \varepsilon_j$ is a strict partition $(j=1, \ldots, n)$.
One of key ingredients of the proof of our main theorem is the
characterization of highest weight vectors in ${\bf B} \otimes
B(\lambda)$ in terms of even Kashiwara operators and the highest
weight vector of $B(\lambda)$. All these statements are verified
simultaneously by a series of interlocking inductive arguments.


This paper is organized as follows. In Section 1, we recall some of
the  basic properties of $U_q(\qn)$-modules in the category
$\Oint$. Section 2 is devoted to the
definitions, examples, and some preparatory statements  related to
crystal bases. In particular, we prove the tensor product rule. In
Section 3, we give algebraic and combinatorial characterizations of
highest weight vectors in $\B^{\otimes N}$.  In Section 4, we prove
our main result: the existence and uniqueness theorem for crystal
bases.

\section{The quantum queer superalgebra}\label{sec:qn}

Let $\F=\C((q))$ be the field of formal
Laurent series in an indeterminate $q$
and let $\A=\C[[q]]$ be the subring of $\F$
consisting of formal power series in $q$. For $k \in \Z_{\ge 0}$, we
define
$$[k]= \frac{q^k - q^{-k}}{q - q^{-1}}, \quad [0]!=1, \quad [k]! =
[k] [k-1] \cdots [2][1].$$

 For an integer $n \geq 2$,
let $P^{\vee} = \Z k_1 \oplus \cdots \oplus \Z k_n$ be a free
abelian group of rank $n$ and let ${\mathfrak h} = \C \otimes_{\Z}
P^{\vee}$ be its complexification. Define the linear functionals
$\epsilon_i \in \mathfrak{h}^*$ by $\epsilon_i(k_j) = \delta_{ij}$
$(i,j=1, \ldots, n)$ and set $P= \Z \epsilon_1 \oplus \cdots \oplus
\Z \epsilon_n$. We denote by $\alpha_i = \epsilon_i -
\epsilon_{i+1}$ the {\em simple roots} and by $h_i=k_i-k_{i+1}$
the {\em simple coroots}.

\Def The {\em quantum queer superalgebra $U_q(\mathfrak{q}(n))$} is
the superalgebra over $\F$ with $1$ generated by the symbols $e_i$,
$f_i$, $e_{\ol i}$, $f_{\ol i}$ $(i=1, \ldots, n-1)$, $q^{h}$ $(h\in
P^\vee)$, $k_{\ol j}$ $(j=1, \ldots, n)$ with the following defining
relations.
\begin{align}
\allowdisplaybreaks
\nonumber & q^{0}=1, \ \ q^{h_1} q^{h_2} = q^{h_1 + h_2} \ \ (h_1,
h_2 \in P^{\vee}), \\
\nonumber & q^h e_i q^{-h} = q^{\alpha_i(h)} e_i \ \ (h\in P^{\vee}), \displaybreak[1]\\
\nonumber & q^h f_i q^{-h} = q^{-\alpha_i(h)} f_i \ \ (h\in P^{\vee}), \displaybreak[1]\\
\nonumber & q^h k_{\ol j} = k_{\ol j} q^h, \displaybreak[1]\\
\nonumber & e_i f_j - f_j e_i = \delta_{ij} \dfrac{q^{k_i - k_{i+1}} - q^{-k_i
+ k_{i+1}}}{q-q^{-1}}, \displaybreak[1]\\
\nonumber & e_i e_j - e_j e_i = f_i f_j - f_j f_i = 0 \quad \text{if} \ |i-j|>1, \displaybreak[1]\\
\nonumber & e_i^2 e_j -(q+q^{-1}) e_i e_j e_i  + e_j e_i^2= 0  \quad \text{if} \ |i-j|=1,\displaybreak[1]\\
\nonumber & f_i^2 f_j - (q+q^{-1}) f_i f_j f_i + f_j f_i^2 = 0  \quad \text{if} \ |i-j|=1,\displaybreak[1]\\
\nonumber & k_{\ol i}^2 = \dfrac{q^{2k_i} - q^{-2k_i}}{q^2 - q^{-2}}, \\
          & k_{\ol i} k_{\ol j} + k_{\ol j} k_{\ol i} =0 \ \ \text{if} \  i \neq j,
\label{eq:defining relations} \\
\nonumber & k_{\ol i} e_i - q e_i k_{\ol i} = e_{\ol i} q^{-k_i}, \ q k_{\ol i}e_{i-1}- e_{i-1}k_{\ol i}=-q^{-k_i} e_{\ol{i-1}}, \displaybreak[1]\\
\nn & k_{\ol i}e_j-e_jk_{\ol i}=0 \quad \text{if} \ j \neq i,i-1, \displaybreak[1]\\
\nn & k_{\ol i} f_i - q f_i k_{\ol i} = -f_{\ol i} q^{k_i}, \ q k_{\ol i} f_{i-1}-f_{i-1}k_{\ol i}=q^{k_i}f_{\ol{i-1}}, \displaybreak[1]\\
\nn & k_{\ol i}f_j-f_jk_{\ol i}=0 \quad \text{if} \ j \neq i, i-1,
\displaybreak[1]  \\ \nonumber & e_i f_{\ol j} - f_{\ol j} e_i =
\delta_{ij} (k_{\ol i}
q^{-k_{i+1}} - k_{\ol{i+1}} q^{-k_i}), \\
\nonumber & e_{\ol i} f_j - f_j e_{\ol i} = \delta_{ij} (k_{\ol i}
q^{k_{i+1}} - k_{\ol{i+1}} q^{k_i}), \\
\nonumber &e_i e_{\ol i} - e_{\ol i} e_i = f_i f_{\ol i} - f_{\ol i} f_i = 0, \\
\nonumber &e_i e_{i+1} - q e_{i+1}e_i =
e_{\overline{i}}e_{\overline{i+1}}+ q
e_{\overline{i+1}}e_{\overline{i}}, \\
\nonumber&q f_{i+1}f_i - f_i f_{i+1} =
f_{\overline{i}}f_{\overline{i+1}}+ q f_{\overline{i+1}}f_{\overline{i}}, \\
\nonumber & e_i^2 e_{\overline{j}} - (q+q^{-1})e_i e_{\overline{j}}
e_i + e_{\overline{j}} e_i^2= 0 \quad \text{if} \ |i-j|=1, \\
\nonumber & f_i^2 f_{\overline{j}} - (q+q^{-1})f_i f_{\overline{j}}
f_i + f_{\overline{j}} f_i^2=0 \quad \text{if} \ |i-j|=1.
\end{align}
\edf

The generators $e_i$, $f_i$ $(i=1, \ldots, n-1)$, $q^{h}$ ($h\in
P^\vee$) are regarded as {\em even} and $e_{\ol i}$, $f_{\ol i}$
$(i=1, \ldots, n-1)$, $k_{\ol j}$ $(j=1, \ldots, n)$ are {\em
odd}. From the defining relations, it is easy to see that the even
generators together with $k_{\ol 1}$ generate the whole algebra
$\Uq$.

\begin{remark}
The generators in \eqref{eq:defining relations}
are different from those in \cite[Theorem 2.1]{GJKK}.
The elements $e_i$, $f_i$, $e_{\ol i}$ and $f_{\ol i}$ in \eqref{eq:defining relations}
 correspond to $q^{k_{i+1}} e_i $, $f_i q^{-k_{i+1}}$, $q^{k_{i+1}}e_{\ol i}$
 and $f_{\ol i}q^{-k_{i+1}}$
 in \cite[Theorem 2.1]{GJKK}, respectively.
We rewrite the whole defining relations in \cite[Theorem 2.1]{GJKK}
 in terms of new generators and remove some relations which can be derived from the others.
\end{remark}

The superalgebra $U_q(\mathfrak{q}(n))$ is a bialgebra with the
comultiplication $\Delta\cl U_q(\mathfrak{q}(n)) \to
U_q(\mathfrak{q}(n)) \otimes U_q(\mathfrak{q}(n))$ defined by
\begin{equation}
\begin{aligned}
& \Delta(q^{h})  = q^{h} \otimes q^{h}\quad\text{for $h\in P^\vee$,} \\
& \Delta(e_i)  = e_i \otimes q^{-k_i + k_{i+1}} + 1 \otimes e_i, \\
& \Delta(f_i)  = f_i \otimes 1 + q^{k_i - k_{i+1}} \otimes f_i, \\
& \Delta(k_{\ol 1}) =k_{\ol 1}\otimes q^{k_1}+ q^{-k_1} \otimes k_{\ol 1}.
\end{aligned}
\end{equation}

Let $U^{+}$ (resp.\ $U^{-}$) be the subalgebra of
$U_q(\mathfrak{q}(n))$ generated by $e_i$, $e_{\ol i}$
$(i=1,\ldots, n-1)$ (resp.\ $f_i$, $f_{\ol i}$ ($i=1, \ldots, n-1$)), and
let $U^{0}$ be the subalgebra
generated by $q^{h}$ ($h\in P^\vee$) and $k_{\ol j}$ $(j=1, \ldots, n)$.
In \cite{GJKK}, it was
shown that the algebra $U_q(\mathfrak{q}(n))$ has the {\em
triangular decomposition}:
\begin{equation}
U^{-} \otimes U^{0} \otimes U^{+}\isoto
U_q(\mathfrak{q}(n)).
\end{equation}

 Hereafter, a $U_q(\mathfrak{q}(n))$-module is understood as a
$U_q(\mathfrak{q}(n))$-supermodule.
A $U_q(\mathfrak{q}(n))$-module $M$ is called a {\em weight module}
if $M$ has a weight space decomposition $M=\soplus_{\mu \in P}
M_{\mu}$, where
$$M_{\mu}\seteq  \set{ m \in M}{q^h m = q^{\mu(h)} m \ \ \text{for all} \ h
\in P^{\vee} }.$$ The set of weights of $M$ is defined to be
$$\wt(M) = \set{\mu \in P}{M_{\mu} \neq 0 }.$$

\Def A weight module $V$ is called a {\em highest weight module
with highest weight $\la \in P$} if $V_{\la}$ is finite-dimensional and
satisfies the following conditions:
\bna
\item $V$ is generated by $V_{\la}$,
\item $e_i v = e_{\ol i} v =0$ for all $v \in V_{\la}$,
$i=1, \ldots, n-1$.
\ee  \edf

As seen in \cite{GJKK}, there exists a unique irreducible highest weight module with highest
weight $\la \in P$ up to parity change, which will be denoted by
$V(\la)$.

Set
\begin{equation*}
\begin{aligned}
P^{\ge 0} = & \{ \la = \la_1 \epsilon_1 + \cdots + \la_n \epsilon_n
\in P\, ; \, \la_j \in \Z_{\ge 0} \ \ \text{for all} \ j=1, \ldots, n \}, \\
\La^{+} = & \{\la = \la_1 \epsilon_1 + \cdots + \la_n \epsilon_n \in
P^{\ge 0}\, ; \, \text{$\la_{i} \ge \la_{i+1}$ and $\la_{i}=\la_{i+1}$ implies}\\
 &  \hs{31ex}\text{$\la_{i} = \la_{i+1} = 0$ for all $i=1,
 \ldots,n-1$}\}.
\end{aligned}
\end{equation*}
Note that each element $\la \in \La^{+}$ corresponds to a {\em
strict partition} $\la = (\la_1 > \la_2 > \cdots > \la_r >0)$.
Thus we will often call $\la \in \La^{+}$ a strict partition. With
the same reason, we call $\lambda=(\la_1,\la_2,\ldots,\la_n) \in
P^{\geq 0}$ a {\it partition} if $\la_1 \geq \la_2 \geq \cdots
\geq \la_r > \la_{r+1}=\cdots =\la_n=0$. We denote $r$  by
$\ell(\la)$.

\begin{example}
Let $$\V = \soplus_{j=1}^n \F v_{j} \oplus \soplus_{j=1}^n \F
v_{\ol j}$$ be the vector representation of $U_q(\mathfrak{q}(n))$.
The action of $\Uq$ on $\V$ is given as follows:
\begin{equation}
\ba{llll}
e_iv_j=\delta_{j,i+1}v_i, &e_iv_{\ol j}=\delta_{j,i+1}v_{\ol i},
&f_iv_j=\delta_{j,i}v_{i+1},&f_iv_{\ol j}=\delta_{j,i}v_{\ol{i+1}}, \\[1ex]
 e_{\ol i}v_j=\delta_{j,i+1}v_{\ol{i}},&e_{\ol i}v_{\ol j}=\delta_{j,i+1}v_{i},&
f_{\ol i}v_j=\delta_{j,i}v_{\ol{i+1}},&f_{\ol i}v_{\ol j}=\delta_{j,i}v_{{i+1}}, \\[1ex]
q^h v_j=q^{\epsilon_j(h)} v_j, &q^h v_{\ol j}=q^{\epsilon_j(h)} v_{\ol j},
&k_{\ol i}v_j=\delta_{j,i}v_{\ol j},&k_{\ol i}v_{\ol j}=\delta_{j,i}v_{j}.
\ea
\end{equation}
Note that $\V$ is an irreducible highest weight module with highest weight $\epsilon_1$.
 \end{example}

\Def We define $\Oint$ to be the category of
finite-dimensional weight modules $M$ satisfying the following conditions:
\bna
\item  $\wt(M) \subset P^{\ge 0}$,
\item for any $\mu\in P^{\ge0}$ and $i \in \{1,\ldots, n\}$
 such that $\lan k_i,\mu\ran=0$,
we have $k_{\ol i}\vert_{M_\mu}=0$.
\ee
\edf

\begin{remark} 
By Lemma~\ref{le:invarianct under tildeki} below,
it is enough to assume $i=1$ in the
condition (b).
Note also that the condition (b) is equivalent to saying that
every weight space $M_\mu$ is completely reducible as a $U^0$-module.
\end{remark}

The fundamental properties of the category $\Oint$ are summarized in the following proposition.

\vskip 2ex

\Prop [\cite{GJKK}] \hfill

\bna

\item Every $U_q(\mathfrak{q}(n))$-module in
$\Oint$ is completely reducible.

\item Every irreducible object in $\Oint$ has the
form $V(\la)$ for some $\la \in \La^{+}$.

\item The category $\Oint$ is stable under
tensor products.
\ee
\enprop

In \cite{GJKK}, we employed the rational function field $\C(q)$
as the base field of $\uqqn$. 
But here, we  employ $\C((q))$ instead of $\C(q)$ as the base field of
$\uqqn$. 
Note that when $m$ is a non-negative integer, the
$q$-integer $\dfrac{q^{2m}-q^{-2m}}{q^2-q^{-2}}$  has a square root
in $\C((q))$ but not in $\C(q)$.
This difference gives the following two statements,
which is simpler than the corresponding statements
in \cite{GJKK}.

\begin{prop}[{cf.\ \cite[Corollary 3.9]{GJKK})}]
Let ${\rm Cliff} _q(\la)$ be the associative superalgebra over
$\C((q))$ generated by odd generators
$\set{t_{\ol i}}{i=1,2,\ldots, n}$ with the
defining relations
$$
\begin{array}{cc}
t_{\ol i} t_{\ol j} + t_{\ol j} t_{\ol i} = \delta_{ij}
\dfrac{2(q^{2\la_i}-q^{-2\la_i})}{q^2-q^{-2}}, & i,j =
1,2,\ldots,n.
\end{array}
$$
Then ${\rm Cliff} _q(\la)$ 
has up to isomorphism
\bna
\item two simple modules $E^q(\la)$ and $\Pi(E^q(\la))$ of dimension $2^{k-1} | 2^{k-1}$ if $\ell(\la)=2k$,
\item one simple module $E^q(\la) \cong \Pi(E^q(\la))$ of dimension $2^k | 2^k$ if $\ell(\la)=2k+1$.
\end{enumerate}
\end{prop}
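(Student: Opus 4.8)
The plan is to reduce the statement to the classical structure theory of Clifford superalgebras over a field containing $\sqrt{-1}$. First I would record that in ${\rm Cliff}_q(\la)$ each generator satisfies $t_{\ol i}^2 = d_i := \tfrac{q^{2\la_i}-q^{-2\la_i}}{q^2-q^{-2}}$, a scalar which vanishes precisely when $\la_i = 0$, i.e. for $i > r := \ell(\la)$, the number of nonzero parts of $\la$. For $i > r$ the element $t_{\ol i}$ is square-zero and anticommutes with every generator, so the two-sided ideal $J$ generated by $\{t_{\ol i} : i>r\}$ is nilpotent; since a nilpotent ideal annihilates every simple module, it suffices to classify the simple modules of ${\rm Cliff}_q(\la)/J$. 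For $1\le i\le r$ we have $\la_i\ge1$ and, as noted just before the statement, $d_i = q^{-2(\la_i-1)}\bigl(1+q^4+\cdots+q^{4(\la_i-1)}\bigr)$ is a nonzero square in $\C((q))$ (the unit $1+q^4+\cdots$ being a square in $\C[[q]]$); choosing $c_i$ with $c_i^2=d_i$ and setting $\tau_{\ol i} := c_i^{-1}t_{\ol i}$ identifies ${\rm Cliff}_q(\la)/J$ with the standard Clifford superalgebra ${\rm Cliff}_r$ over $\C((q))$ on odd generators $\tau_{\ol 1},\dots,\tau_{\ol r}$ with $\tau_{\ol i}^2=1$ and $\tau_{\ol i}\tau_{\ol j}=-\tau_{\ol j}\tau_{\ol i}$ for $i\ne j$.

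Next I would pin down the isomorphism type of ${\rm Cliff}_r$. Since generators indexed by disjoint blocks anticommute, there is a graded-tensor-product factorization ${\rm Cliff}_r\cong{\rm Cliff}_2\,\widehat{\otimes}\,\cdots\,\widehat{\otimes}\,{\rm Cliff}_2$ with $k$ factors if $r=2k$, and the same with one extra factor ${\rm Cliff}_1$ if $r=2k+1$. Because $\C((q))$ contains $\sqrt{-1}$, one has ${\rm Cliff}_2\cong M(1|1)=\End_{\C((q))}\bigl(\C((q))^{1|1}\bigr)$ (via the standard $2\times2$ representation) and ${\rm Cliff}_1\cong Q(1)$, the rank-one queer matrix superalgebra. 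Feeding in $M(a|a)\,\widehat{\otimes}\,M(b|b)\cong M(2ab|2ab)$ and $M(a|a)\,\widehat{\otimes}\,Q(1)\cong Q(2a)$ then yields ${\rm Cliff}_{2k}\cong M(2^{k-1}|2^{k-1})$ and ${\rm Cliff}_{2k+1}\cong Q(2^k)$.

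Finally I would read off the simple modules by super-Wedderburn theory. The superalgebra $M(p|p)$ has, up to isomorphism, exactly the two simple modules $\C((q))^{p|p}$ and $\Pi\bigl(\C((q))^{p|p}\bigr)$, and they are non-isomorphic because $\End_{M(p|p)}\bigl(\C((q))^{p|p}\bigr)=\C((q))$ has no odd part; pulling these back along ${\rm Cliff}_q(\la)\epi{\rm Cliff}_r\cong M(2^{k-1}|2^{k-1})$ and calling either one $E^q(\la)$ gives (a). The algebra $Q(n)$ has a unique simple module $\C((q))^{n|n}$, whose algebra of self-intertwiners contains an odd invertible element, so $\C((q))^{n|n}\cong\Pi\bigl(\C((q))^{n|n}\bigr)$; pulling this back along ${\rm Cliff}_q(\la)\epi{\rm Cliff}_r\cong Q(2^k)$ gives (b) with $E^q(\la):=\C((q))^{2^k|2^k}$.

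The only genuine point to watch is that everything must be carried out over $\C((q))$ rather than over $\C$: one needs $\C((q))$ to contain $\sqrt{-1}$ and the square roots $c_i$ of the structure constants $d_i$ — exactly the feature of $\C((q))$ emphasized before the statement — after which the super-Wedderburn classification and the tensor identities above hold over $\C((q))$ verbatim. This is also why the statement is cleaner here than in \cite[Corollary 3.9]{GJKK}: over $\C(q)$ some $d_i$ fail to be squares, forcing additional cases.
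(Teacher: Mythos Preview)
Your argument is correct. The paper itself does not supply a proof of this proposition; it merely states the result with a reference to \cite[Corollary~3.9]{GJKK} and explains in the surrounding text that the passage from $\C(q)$ to $\C((q))$ makes the $q$-integers $\dfrac{q^{2m}-q^{-2m}}{q^2-q^{-2}}$ into squares, so that the classification over $\C((q))$ matches the classical one over~$\C$. Your write-up is precisely a detailed execution of that remark: you quotient out the nilpotent ideal coming from the zero parts of $\la$, rescale by the square roots available in $\C((q))$ to reach the standard Clifford superalgebra ${\rm Cliff}_r$, and then invoke the well-known structure ${\rm Cliff}_{2k}\cong M(2^{k-1}\mid 2^{k-1})$, ${\rm Cliff}_{2k+1}\cong Q(2^k)$ together with super-Wedderburn. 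So your approach is exactly the one the paper has in mind, just made explicit.
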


\begin{prop}[{cf.\ \cite[Theorem 5.14]{GJKK}}]
 \label{prop:classical limit}
Let $V(\la)$ be an irreducible highest weight module with highest weight $\la \in \Lambda^+$.
Then we have
$$\ch V(\la) = \ch V_{\cls}(\la),$$
where $V_{\cls}(\la)$ is an irreducible highest weight module over $\q(n)$ with highest weight $\la$.
\end{prop}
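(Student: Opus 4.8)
The plan is to reduce the computation of $\ch V(\la)$ to the classical case by a flat deformation / specialization argument, exploiting the base field $\C((q))$ in an essential way. First I would fix $\la\in\La^+$ and consider the highest weight space $V(\la)_\la$, which carries an action of the Clifford-type superalgebra $\mathrm{Cliff}_q(\la)$ obtained from the relations $k_{\ol i}^2 = (q^{2k_i}-q^{-2k_i})/(q^2-q^{-2})$ evaluated on $V(\la)_\la$, together with the anticommutativity $k_{\ol i}k_{\ol j}+k_{\ol j}k_{\ol i}=0$ for $i\ne j$; by the defining conditions of $\Oint$ the entries $q^{2\la_i}$ appearing here are the relevant ones. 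By the previous proposition, over $\C((q))$ this Clifford algebra has (up to parity shift) a \emph{unique} simple module $E^q(\la)$, of dimension $2^{\lceil \ell(\la)/2\rceil}\,|\,2^{\lceil \ell(\la)/2\rceil}$. Since $V(\la)$ is irreducible, $V(\la)_\la\cong E^q(\la)$ (up to $\Pi$), so $\dim V(\la)_\la$ is pinned down; the same statement holds on the classical side for $V_{\cls}(\la)_\la$ as a module over the classical Clifford algebra $\mathrm{Cliff}(\la)$, and crucially — again because we may extract square roots of $q$-integers in $\C((q))$ — the two Clifford algebras have the \emph{same} representation-theoretic behavior, so $\dim V(\la)_\la = \dim V_{\cls}(\la)_\la$.

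Next I would compare characters weight space by weight space. The key structural input is the triangular decomposition $U^-\otimes U^0\otimes U^+ \isoto \Uq$ recalled in the excerpt: $V(\la) = U^-\cdot V(\la)_\la$, and $U^-$ is generated by the $f_i,f_{\ol i}$ with a PBW-type basis whose "shape" is independent of $q$ (indeed $U^-$ has a classical limit which is $U(\mathfrak{n}^-)$ for $\q(n)$). One builds the Verma-type module $M(\la) = \Uq\otimes_{U^{\ge 0}} E^q(\la)$ and observes that $\ch M(\la)$ is given by a product formula identical to the classical one (a product over positive even roots of $(1-e^{-\al})^{-1}$ times a product over positive odd roots of $(1+e^{-\al})$, times $\ch E^q(\la)$), and likewise for its classical counterpart $M_{\cls}(\la)$ with $\ch E_{\cls}(\la)$ in place of $\ch E^q(\la)$. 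Since $\dim E^q(\la) = \dim E_{\cls}(\la)$ by the first step, $\ch M(\la)=\ch M_{\cls}(\la)$.

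Finally, $V(\la)$ is the unique irreducible quotient of $M(\la)$, and $V_{\cls}(\la)$ that of $M_{\cls}(\la)$; so the characters differ from those of the Vermas by the characters of the respective maximal submodules $N(\la)\subset M(\la)$, $N_{\cls}(\la)\subset M_{\cls}(\la)$. To conclude $\ch V(\la)=\ch V_{\cls}(\la)$ I would argue that $N(\la)$ and $N_{\cls}(\la)$ have the same character: each is generated by its singular vectors, and the set of weights where singular vectors can occur, together with their multiplicities, is governed by contravariant (Shapovalov-type) form whose determinant is, up to units, a product of the same linear factors in $\la$ as classically — here one invokes the classical limit functor $\C((q))\rightsquigarrow\C$ of \cite{GJKK}, under which $M(\la)$ specializes to $M_{\cls}(\la)$ (or to a sum of two copies, matching the doubling of Clifford modules, which cancels in the character identity after normalizing). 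The main obstacle, and the step deserving the most care, is precisely this last point: controlling the radical of the Shapovalov form uniformly in $q$ and matching it with the classical radical, since the odd generators $f_{\ol i}$ are \emph{not} nilpotent and the naive specialization of an irreducible module over $\C((q))$ need not stay irreducible over $\C$. I expect this to be handled exactly as in \cite[\S 5]{GJKK} via the classical limit theorem for highest weight modules, now in the cleaner form available over $\C((q))$, so that the present proposition becomes a corollary of that machinery together with the uniqueness of simple Clifford modules established above.
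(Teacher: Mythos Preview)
The paper does not supply its own proof of this proposition: it is stated with the citation ``cf.\ \cite[Theorem~5.14]{GJKK}'' and no argument follows. The surrounding text only remarks that, because we work over $\C((q))$ rather than $\C(q)$, the Clifford algebras $\mathrm{Cliff}_q(\la)$ and their complex analogues have the \emph{same} classification of simple modules, so the character identity holds on the nose (without the factor of~$2$ that appears in \cite{GJKK}). In other words, the proposition is meant to be read as a direct corollary of the classical-limit machinery of \cite[\S5]{GJKK}, once one observes that the discrepancy there came solely from the mismatch of Clifford modules, which disappears over $\C((q))$.

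Your outline is consistent with that story, and your first two steps (matching $\dim V(\la)_\la = \dim V_{\cls}(\la)_\la$ via Clifford theory, and matching the Verma characters via the triangular decomposition) are correct and are exactly the ingredients used in \cite{GJKK}. Where your write-up drifts is in Step~3: framing the comparison of maximal submodules through ``singular vectors'' and ``Shapovalov determinant factorizations'' is more indirect than necessary and, as stated, does not close the argument --- knowing that the Shapovalov determinants have the same zero locus in~$\la$ does not by itself pin down $\ch N(\la)$. The argument in \cite{GJKK} is more direct: choose an $\A$-lattice $V_\A$ in $V(\la)$ generated by an $\A$-form of the highest weight space, show that $\overline V \seteq V_\A/(q-1)V_\A$ is a highest weight $U(\qn)$-module with $\ch\overline V = \ch V(\la)$, and then prove $\overline V$ is irreducible (using the contravariant form and the fact that, over $\C((q))$, the highest weight spaces already have the correct dimension). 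You essentially say this in your last paragraph; I would drop the singular-vector digression and go straight to that specialization argument, which is what the paper is invoking.
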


In short, contrary to \cite{GJKK}, we have the same classification for
the modules over ${\rm Cliff} _q(\la)$ as that for the modules over
the Clifford algebra with the base field $\C$. Also we have the same characters
of the irreducible modules over $\uqqn$ as those of the
irreducible modules over $\q(n)$.

\begin{remark} \label{rem:Grothendieck rings}
Define $\mathcal O^{\geq 0}_{\inte,\cls}$ to be the category of finite-dimensional weight modules $M$ over $\q(n)$
such that i) $\wt(M) \subset P^{\geq 0}$,
ii) $k_{\ol i}|_{M_\mu}=0$ for $i \in \{1, \ldots, n \}$ and $\mu \in P^{\geq 0}$
satisfying $\langle k_i, \mu \rangle =0$.
Here $k_{\ol i}$ is the element of $\q(n)$ given by
$\left(
\begin{array}{cc} 0& E_{i,i} \\ E_{i,i} & 0
\end{array} \right) $, where $E_{i,i}$ is the $n \times n$-matrix having $1$ in the $(i,i)$-position and $0$ elsewhere.
Let us denote the Grothendieck rings of the categories by
$K(\mathcal O^{\geq 0}_{\inte})$ and $K(\mathcal O^{\geq 0}_{\inte,\cls})$, respectively.
Since $\mathcal O^{\geq 0}_{\inte,\cls}$
and $\Oint$ are semisimple categories,
by taking the classical limit (i.e., taking the reduction at $q=1$),
we have a ring isomorphism
$$K(\mathcal O^{\geq 0}_{\inte}) \isoto K(\mathcal O^{\geq 0}_{\inte,\cls})$$
which sends $V(\la) \mapsto V_{\cls}(\la)$. 
\end{remark}

Now we give a decomposition of the tensor product of the natural
representation with a highest weight module.

\Th\label{th:decomposition} Let $M$ 
be a  highest weight $\Uq$-module in $\Oint$ with
highest weight $\lambda \in \Lambda^+$. Then we have
$$\V \otimes M \simeq\soplus_{\stackrel{\la + \epsilon_j :}{ \text{strict partition}}}
M_ j,$$ where $M_j$ is a highest weight $\Uq$-module in the
category $\Oint$ with highest weight $\la +
\epsilon_j$ and $\dim (M_{j})_{\la + \epsilon_j} = 2 \dim
M_{\la}$.
\enth
\begin{proof}
We will prove that our assertion holds for finite-dimensional
highest weight modules over $\q(n)$. Then,
by Remark~\ref{rem:Grothendieck rings}, our assertion holds also for
finite-dimensional highest weight modules over $\uqqn$.

Let $\uqn$ be the universal enveloping algebra of $\q(n)$ and
let $U^{\geq 0}$ be the universal enveloping algebra of the standard
Borel subalgebra of $\q(n)$. Let $M$ be a highest weight
$\uqn$-module with highest weight $\la \in \Lambda^+$ and $\V_{\cls}
= \soplus_{i=1}^n (\C v_i \oplus \C v_{\ol i})$ be the natural
representation of $\uqn$. Consider a surjective homomorphism
$$\uqn \tensor_{U^{\geq 0}} \mathbf v_\la \twoheadrightarrow M,$$
where $\mathbf v_\la \simeq M_\la$ as a $U^{\geq 0}$-module.
Now we have
$$ \V_{\cls} \tensor \bigl(\uqn \tensor_{U^{\geq 0}} \mathbf v_\la\bigr) \simeq
 \uqn \tensor_{U^{\geq 0}}  (\V_{\cls} \tensor \mathbf v_\la).
$$
Then $
F_i(\V_{\cls} \tensor \mathbf v_\la) \seteq
\soplus_{j \leq i}(\C v_j \oplus \C v_{\ol j}) \tensor \mathbf v_\la$
is a $U^{\geq 0}$-module. We set
$$
N \seteq \uqn \tensor_{U^{\geq 0}}  (\V_{\cls} \tensor \mathbf v_\la),
\quad
F_i(N) \seteq \uqn \tensor_{U^{\geq 0}} F_i(\V_{\cls} \otimes \mathbf v_\la).
$$
Since
$$
F_i(\V_{\cls} \tensor \mathbf v_\la) / F_{i-1}(\V_{\cls} \tensor
\mathbf v_\la) \simeq (\C v_i \oplus \C v_{\ol i}) \tensor \mathbf
v_\la,
$$
we see that
$$
F_i(N) / F_{i-1}(N) \simeq
\uqn \tensor_{U^{\geq 0}} \big(F_i(\V_{\cls} \tensor \mathbf v_\la) /
F_{i-1}(\V_{\cls} \tensor \mathbf v_\la)\big)
$$
is a highest weight module with highest weight $\la + \eps_i$.

\medskip
Now we shall show
\eq
N\simeq\soplus_{k \leq r} \big( F_k(N) / F_{k-1}(N)\big)\oplus
N / F_r(N),\quad \text{where $r=\ell(\la)$.}
\label{eq:fildec}
\eneq
First note that $F_i(N) / F_{i-1}(N)$ admits the central character
$$\chi_i \seteq \chi_{\la+\eps_i}\cl\mathcal Z \to \C,$$
where $\mathcal Z$ is the center of $\uqn$ and $\chi_\mu$ is the
central character afforded by the Weyl module $W(\mu)$ with
highest weight $\mu$ (see \cite[Section 1]{GJKK} for Weyl modules
and central characters). From \cite[Proposition 1.7]{GJKK}, we
know that $\chi_1,\ldots, \chi_r, \chi_{r+1}$ are different from
each other, and $\chi_{r+1} = \chi_{r+2} = \cdots = \chi_n$.

\noindent
Let us choose an element  $a \in \mathcal Z$ such that $\chi_1(a)
= \cdots = \chi_r(a)=0$ and $\chi_{r+1}(a) \neq 0$. Then we have
$ a |_{F_i(N)/F_{i-1}(N)} =0$ and hence $a F_i(N) \subset
F_{i-1}(N)$ for $i \leq r$. It follows that $a^r F_r(N) =F_{-1}(N)
=0$. Hence $N \stackrel{a^r}{\rightarrow} N$ factors through $N
\rightarrow N/F_r(N) \stackrel{\psi}{\rightarrow} N$. Since $a^r\cl
N/F_r(N) \to N/F_r(N)$ is an isomorphism, we have the diagram
\begin{displaymath}
\xymatrix@R=2ex {
&N \ar[rd]\\
N/F_r(N) \ar[ru]^-{\psi} \ar[rr]_{a^r}^-{\sim} & &  N/F_r(N)\,.}
\end{displaymath}

It follows that
$$N \simeq (N/F_r(N)) \oplus F_r(N).$$
Using a similar argument, we can conclude that
$$F_k(N) \simeq (F_k(N)/F_{k-1}(N)) \oplus F_{k-1}(N)$$
for $k \leq r$. Hence we obtain \eqref{eq:fildec}.

By \cite[Proposition 1.4 (3)]{GJKK}, we know that $F_i(N) /
F_{i-1}(N)$ admits a finite-dimensional quotient
if and only if $\la+\eps_i$ is a strict partition,
and $N/F_r(N)$ has only trivial finite-dimensional quotient.
Since $\V_{\cls} \tensor M$
is a largest finite-dimensional quotient of $N$, we get the desired result.
\end{proof}

\begin{corollary}\label{cor:Vtens}
Any irreducible $\uqqn$-module in
$\Oint$ appears as a direct summand of tensor products of\/ $\V$.
\end{corollary}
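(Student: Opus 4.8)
The plan is to deduce Corollary~\ref{cor:Vtens} from Theorem~\ref{th:decomposition}, together with the complete reducibility of $\Oint$ and the fact that its irreducible objects are precisely the $V(\lambda)$ with $\lambda\in\Lambda^+$. Hence it suffices to show that each $V(\lambda)$ occurs as a direct summand of some tensor power $\V^{\otimes N}$, and I would prove this by induction on $|\lambda|\seteq\lambda_1+\cdots+\lambda_n$. The base case $\lambda=0$ is immediate: $V(0)$ is the trivial module, which one may regard as the empty tensor product $\V^{\otimes 0}$ (or else one starts the induction at $\lambda=\epsilon_1$, where $V(\epsilon_1)=\V$ itself).

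For the inductive step, take a nonzero strict partition $\lambda=(\lambda_1>\cdots>\lambda_r>0)$ with $r=\ell(\lambda)$, and set $\mu=\lambda-\epsilon_r$. Deleting a box from the last row of a strict partition again yields a strict partition, so $\mu\in\Lambda^+$, and $|\mu|=|\lambda|-1$; by the induction hypothesis $V(\mu)$ is a direct summand of $\V^{\otimes N}$ for some $N$. Since $\mu+\epsilon_r=\lambda$ is a strict partition, Theorem~\ref{th:decomposition} applied to $M=V(\mu)$ produces a direct summand $M_r$ of $\V\otimes V(\mu)$ which lies in $\Oint$, is a highest weight module of highest weight $\lambda$, and satisfies $\dim(M_r)_\lambda=2\dim V(\mu)_\mu>0$; in particular $M_r\neq 0$.

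It then remains to extract $V(\lambda)$ from $M_r$. Being a nonzero object of $\Oint$, the module $M_r$ has an irreducible quotient, necessarily of the form $V(\nu)$ for some $\nu\in\Lambda^+$, and by complete reducibility this quotient is a direct summand of $M_r$. As a quotient of the highest weight module $M_r$, the module $V(\nu)$ is generated by the nonzero image of $(M_r)_\lambda$, which consists of vectors of weight $\lambda$ annihilated by all $e_i$ and $e_{\ol i}$; using the triangular decomposition one checks that $\Uq$ applied to such vectors produces only weights $\leq\lambda$, so $\nu\leq\lambda$, while $\lambda\leq\nu$ because $\lambda$ is then a weight of $V(\nu)$. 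Hence $\nu=\lambda$, so $V(\lambda)$ is a direct summand of $M_r$, hence of $\V\otimes V(\mu)$, and hence of $\V\otimes\V^{\otimes N}=\V^{\otimes(N+1)}$ since tensoring with $\V$ is additive. This closes the induction.

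I do not expect any serious obstacle: the substantive content is entirely carried by Theorem~\ref{th:decomposition}, and what remains is the routine verification that one can always strip a box off a nonzero $\lambda$ and stay in $\Lambda^+$, together with the elementary weight argument identifying the irreducible summand of $M_r$. The only mildly delicate point is that $M_r$ is a priori only a highest weight module (with possibly higher-dimensional highest weight space), so one genuinely needs the semisimplicity of $\Oint$ to conclude that $V(\lambda)$ splits off as a direct summand rather than appearing merely as a subquotient.
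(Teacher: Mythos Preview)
Your proof is correct and follows exactly the approach the paper intends: the paper's proof consists of the single sentence ``It follows immediately from Theorem~\ref{th:decomposition},'' and your argument is precisely the induction on $|\lambda|$ (strip a box from the last row, apply the theorem, use semisimplicity) that makes this ``immediately'' explicit.
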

\Proof
It follows immediately
from Theorem~\ref{th:decomposition}.
\QED

\section{Crystal bases}

Let $M$ be a $U_q(\mathfrak{q}(n))$-module in the category
$\Oint$. For $i=1, 2, \ldots, n-1$, let $u \in
M_{\la}$ $(\la \in P)$ be a weight vector and consider the {\em
$i$-string decomposition} of $u$:
$$u =\sum_{k\ge 0} f_i^{(k)} u_k,$$
where $e_i u_k =0$ for all $k \ge 0$ and $f_i^{(k)} = f_i^{k} /
[k]!$. We define the {\em even Kashiwara operators} $\tei$, $\tfi$
$(i=1, \ldots, n-1)$ by
\begin{equation}
\begin{aligned}
& \tei u = \sum_{k \ge 1} f_i^{(k-1)} u_k, \qquad
 \tfi u = \sum_{k \ge 0} f_i^{(k+1)} u_k.
\end{aligned}
\end{equation}
On the other hand, we define the {\em odd Kashiwara operators}
$\tilde{k}_{\ol {1}}$, $\tilde{e}_{\ol {1}}$, $\tilde{f}_{\ol
{1}}$ by
\begin{equation}
\begin{aligned}
\tkone & = q^{k_1-1}k_{\ol 1}, \\
\teone & = - (e_1 k_{\ol 1} - q k_{\ol 1} e_1) q^{k_1 -1}, \\
\tfone & = - (k_{\ol 1} f_1 - q f_1 k_{\ol 1}) q^{k_2-1}.
\end{aligned}
\end{equation}

The following lemma is obvious.
\begin{lemma}\label{com:evenodd}
The operators $\teone$ and $\tfone$ commute with
$\te_i$ and $\tf_i$ $(3\le i\le n-1)$.
\end{lemma}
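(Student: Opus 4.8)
The plan is to verify the commutation relations by a direct computation using only the defining relations \eqref{eq:defining relations} and the explicit formulas for the Kashiwara operators. Since $\teone$ and $\tfone$ are built out of $e_1$, $f_1$, $k_{\ol 1}$ and powers of $q^{k_1}$, $q^{k_2}$, and $\te_i$, $\tf_i$ for $3\le i\le n-1$ are defined in terms of $e_i$, $f_i$ acting on $i$-string decompositions, the key point is to show that $e_1$, $f_1$, $k_{\ol 1}$ all supercommute with $e_i$, $f_i$, $q^{h_i}$ when $3\le i\le n-1$, so that the whole operators commute. Note that since $k_{\ol 1}$ is odd while $e_i$, $f_i$ ($i\ge 3$) are even, ``supercommute'' here just means ordinary commute.

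\medskip
First I would collect the relevant defining relations: for $|1-i|>1$, i.e.\ $i\ge 3$, we have $e_1e_i=e_ie_1$ and $f_1f_i=f_if_1$; also $e_1f_i-f_ie_1=\delta_{1i}(\cdots)=0$ and $e_if_1-f_1e_i=0$; the relations $q^he_i q^{-h}=q^{\alpha_i(h)}e_i$ show $q^{k_1}$ and $q^{k_2}$ commute with $e_i,f_i$ for $i\ge3$ since $\alpha_i(k_1)=\alpha_i(k_2)=0$ in that range; and $q^hk_{\ol 1}=k_{\ol 1}q^h$ always. It remains to check that $k_{\ol 1}$ commutes with $e_i$ and $f_i$ for $i\ge 3$. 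This is where one must be a little careful: the defining relations only list $k_{\ol i}e_i-qe_ik_{\ol i}=e_{\ol i}q^{-k_i}$ and the analogue for $f_i$, together with $k_{\ol i}k_{\ol j}+k_{\ol j}k_{\ol i}=0$ for $i\ne j$, but there is no relation directly asserting $[k_{\ol 1},e_i]=0$ for $i\ge 3$. One derives it: from $k_{\ol i}e_i-qe_ik_{\ol i}=e_{\ol i}q^{-k_i}$ one gets $e_{\ol i}$ in terms of $k_{\ol i}$ and $e_i$; applying the relations $e_ie_{\ol i}-e_{\ol i}e_i=0$, $k_{\ol i}e_j-\ldots$ etc., together with $e_1e_i=e_ie_1$ for $i\ge 3$, one can express the bracket $[k_{\ol 1},e_i]$ and show it vanishes. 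Alternatively, and more cleanly, one invokes the already-noted fact that $\Uq$ is generated by the even generators together with $k_{\ol 1}$, and uses the $\Z/2$-grading plus weight considerations: $[k_{\ol 1},e_i]$ has weight $\alpha_i$ and is odd; one checks it is killed by all the $e_j,f_j$ adjoint actions in the appropriate sense, or simply reads it off from the explicit relations of \cite{GJKK}.

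\medskip
Once $e_1$, $f_1$, $k_{\ol 1}$, $q^{k_1}$, $q^{k_2}$ are all seen to commute with $e_i$, $f_i$, $q^{h_i}$ for $3\le i\le n-1$, the conclusion follows formally: the operator $\teone=-(e_1k_{\ol 1}-qk_{\ol 1}e_1)q^{k_1-1}$ is a noncommutative polynomial in $e_1$, $k_{\ol 1}$, $q^{k_1}$, hence commutes with $e_i$ and $f_i$ as operators on $M$; similarly for $\tfone$. The even Kashiwara operators $\te_i$, $\tf_i$ depend only on the $\Uq$-module structure restricted to the $\mathfrak{sl}_2$-triple $(e_i,f_i,h_i)$, and since $\teone$, $\tfone$ are $\Uq$-linear expressions commuting with this triple, they preserve the $i$-string decomposition and commute with $\te_i$, $\tf_i$. (One should be slightly careful that $\teone$, $\tfone$ are not quite module maps — they involve $k_{\ol 1}$ which is not central — but they \emph{are} built from operators commuting with the $i$-th $\mathfrak{sl}_2$, which is all that is needed to commute with $\te_i,\tf_i$, since the latter are defined purely in terms of that $\mathfrak{sl}_2$-action.)

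\medskip
The main obstacle is the bookkeeping in the previous paragraph: establishing that $k_{\ol 1}$ genuinely commutes with $e_i$, $f_i$ for $i\ge 3$ from the given presentation, since this is not among the explicitly listed relations and the presence of $e_{\ol i}$, $f_{\ol i}$, $k_{\ol i}$ on the right-hand sides of several relations makes a brute-force manipulation mildly delicate. In the write-up I would likely just say this follows from the defining relations (or cite the relations of \cite{GJKK} which include $[k_{\ol 1},e_i]=0$ for $|1-i|>1$ explicitly), and call the rest routine — which is presumably why the authors call the lemma ``obvious.''
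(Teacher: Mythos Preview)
Your proposal is correct and matches the paper's approach: the paper offers no proof at all, simply declaring the lemma ``obvious,'' and your argument spells out exactly what that means --- the building blocks $e_1,f_1,k_{\ol 1},q^{k_1},q^{k_2}$ of $\teone,\tfone$ commute with the $i$-th $\mathfrak{sl}_2$-triple for $i\ge3$, hence with the Kashiwara operators $\te_i,\tf_i$ defined through that triple. Your flag on $[k_{\ol 1},e_i]=0$ not appearing verbatim among the listed relations is fair bookkeeping; it does hold (and is explicit in the full relation list of \cite{GJKK}), so citing that or noting the indices are disjoint suffices.
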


Recall that an {\em abstract $\mathfrak{gl}(n)$-crystal} is a set
$B$ together with the maps $\tei, \tfi\cl B \to B \sqcup \{0\}$,
$\vphi_i, \eps_i \cl B \to \Z \sqcup \{-\infty\}$ $(i \in I =\{1,
\ldots, n-1\})$, and $\wt\cl B \to P$ satisfying the following
conditions (see \cite{Kas93}):

\begin{itemize}
\item[(i)] $\wt(\tei b) = \wt b + \alpha_i$ if $i\in I$ and
$\tei b \neq 0$,

\item[(ii)] $\wt(\tfi b) = \wt b - \alpha_i$ if $i\in I$ and
$\tfi b \neq 0$,

\item[(iii)] for any $i \in I$ and $b\in B$, $\vphi_i(b) = \eps_i(b) +
\langle h_i, \wt b \rangle$,

\item[(iv)] for any $i\in I$ and $b,b'\in B$,
$\tfi b = b'$ if and only if $b = \tei b'$,

\item[(v)] for any $i \in I$ and $b\in B$
such that $\tei b \neq 0$, we have  $\eps_i(\tei b) = \eps_i(b) -
1$, $\vphi_i(\tei b) = \vphi_i(b) + 1$,

\item[(vi)] for any $i \in I$ and $b\in B$ such that $\tfi b \neq 0$,
we have $\eps_i(\tfi b) = \eps_i(b) + 1$, $\vphi_i(\tfi b) =
\vphi_i(b) - 1$,

\item[(vii)] for any $i \in I$ and $b\in B$ such that $\vphi_i(b) = -\infty$, we
have $\tei b = \tfi b = 0$.

\end{itemize}

In this paper, we say that an abstract $\mathfrak{gl}(n)$-crystal is
a {\em $\mathfrak{gl}(n)$-crystal} if it is realized as a crystal
basis of a finite-dimensional integrable
$U_q(\mathfrak{gl}(n))$-module. In particular, for any $b$ in a
$\mathfrak{gl}(n)$-crystal $B$, we have
$$\eps_i(b)=\max\{n\in\Z_{\ge0}\,;\,\tei^nb\not=0\}, \quad
\vphi_i(b)=\max\{n\in\Z_{\ge0}\,;\,\tfi^nb\not=0\}.$$

\Def \label{def:crystal base}
Let $M= \soplus_{\mu \in P^{\ge 0}} M_{\mu}$ be a
$U_q(\mathfrak{q}(n))$-module in the category
$\Oint$. A {\em crystal basis} of $M$ is a
triple $(L, B, l_{B}=(l_{b})_{b\in B})$, where
\bna
\item $L$ is a free $\A$-submodule of $M$ such that

\bni
\item $\F \otimes_{\A} L \isoto M$,

\item $L = \soplus_{\mu \in P^{\ge 0}} L_{\mu}$, where $L_{\mu} = L
\cap M_{\mu}$,

\item  $L$ is stable under the Kashiwara operators $\tei$,
$\tfi$ $(i=1, \ldots, n-1)$, $\tkone$, $\teone$, $\tfone$.
\end{enumerate}

\item $B$ is a $\mathfrak{gl}(n)$-crystal together with
the maps $\teone, \tfone \cl B \to B \sqcup \{0\}$ such that

\bni
\item $\wt(\teone b) = \wt(b) + \alpha_1$, $\wt(\tfone b) = \wt(b) -
\alpha_1$,

\item for all $b, b' \in B$, $\tfone b = b'$ if and only if $b = \teone b'$.
\end{enumerate}

\item $l_{B}=(l_{b})_{b \in B}$ is a family of 
non-zero $\C$-vector
subspaces of $L/qL$ 
such that

\bni
\item $l_{b} \subset (L/qL)_{\mu}$ for $b \in B_{\mu}$,

\item  $L/qL = \soplus_{b \in B} l_{b}$,

\item $\tkone l_{b} \subset l_{b}$,
\item for $i=1, \ldots, n-1, \ol 1$, we have
\be[{\rm(1)}]
\item
if $\tei b=0$ then $\tei l_{b} =0$, and
otherwise $\tei$ induces an isomorphism
$l_{b}\isoto l_{\tei b}$,
\item
if $\tfi b=0$ then $\tfi l_{b} =0$,
and otherwise $\tfi$ induces an isomorphism $l_{b}\isoto l_{\tfi b}$.
\ee
\end{enumerate}
\end{enumerate}

\edf

\begin{prop}
 Let $(L, B, l_{B})$ be a crystal basis of a $\uqqn$-module $M$.
 Then we have
  $$\teone^2 = \tfone^2 = 0$$ as endomorphisms on $L/qL$.
\begin{proof}
  Since every $u \in L_{\la}$ has a $1$-string decomposition
$u=\sum_{k=0}^N f_1^{(k)}u_k$ with $e_1 u_k=0$ for $k=0, \ldots,
N$, it suffices to show that $\teone^2 u \equiv \tfone^2 u \equiv 0$ (mod $qL$) for
$u=f_1^{(s)}v$ with $e_1 v =0$ and $\wt(v) = \mu$ ($s \geq 0$).

We first show $\teone^2 u \equiv 0$ (mod $qL)$. 
From the defining relations $k_{\ol 1} e_1 -q e_1 k_{\ol 1}=e_{\ol
1}q^{-k_1}$ and $e_1 e_{\ol 1}=e_{\ol 1}e_1$, we obtain
$$e_1 k_{\ol 1}e_1-qe_1^2k_{\ol 1}=e_1 e_{\ol 1}q^{-k_1} \quad \text{and} \quad k_{\ol
1}e_1^2-qe_1k_{\ol 1} e_1=q^{-1} e_{\ol 1} e_1 q^{-k_1}=q^{-1} e_1
e_{\ol 1} q^{-k_1}.$$ Then we have
$$ e_1 k_{\ol 1} e_1 -qe_1^2 k_{\ol 1} = q k_{\ol 1}e_1^2 - q^2
e_1 k_{\ol 1} e_1.$$ That is,
\eq
\ e_1 k_{\ol 1} e_1=e_1^{(2)} k_{\ol 1}
+k_{\ol 1 } e_1^{(2)}.\label{eq:k1serre}
\eneq
Using this formula, we obtain
\begin{equation*}
\begin{aligned}
\teone^{2} &=(e_1 k_{\ol 1} -q k_{\ol 1} e_1)^2 q^{2k_1-1} \\
           &=( (e_1^{(2)}
k_{\ol 1} +k_{\ol 1 } e_1^{(2)}) k_{\ol 1} -q e_1 k_{\ol 1}^2 e_1
-q k_{\ol 1}
           e_1^2 k_{\ol 1}+q^2 k_{\ol 1} (e_1^{(2)}
k_{\ol 1} +k_{\ol 1 } e_1^{(2)}))q^{2k_1-1} \\
           &= \dfrac{q-q^{-1}}{q+q^{-1}} q^2 e_1^2 q^{4k_1}.
\end{aligned}
\end{equation*}
It follows that
\begin{equation*}
\begin{aligned}
\teone^2 u & = \dfrac{q-q^{-1}}{q+q^{-1}} q^{\langle 4k_1, \mu -s
\alpha_1 \rangle +2} e_1^2 f_1^{(s)}v \\
&=\dfrac{q-q^{-1}}{q+q^{-1}} q^{4\langle k_1, \mu \rangle -4s +2}
[\langle k_1-k_2, \mu \rangle -s+1 ] [\langle k_1-k_2, \mu \rangle
-s+2]f_1^{(s-2)}v.
\end{aligned}
\end{equation*}
Note that $q^{2\langle k_1-k_2, \mu \rangle -2s +1} [\langle
k_1-k_2, \mu \rangle -s+1 ] [\langle k_1-k_2, \mu \rangle -s+2]
\equiv 1 $ (mod $q \A)$. Since
\eqn
&&4 \langle k_1, \mu \rangle -4s +2 -(2 \langle k_1-k_2, \mu \rangle
-2s +1)\\
&&\hs{10ex} = 2 (\langle k_1-k_2, \mu \rangle -s)+4 \langle k_2, \mu \rangle
+1\ge1,
\eneqn
we have
$$q^{4\langle k_1, \mu \rangle -4s +2}
[\langle k_1-k_2, \mu \rangle -s+1 ] [\langle k_1-k_2, \mu \rangle
-s+2] \in q \A,
$$
which implies $\teone^2 u \equiv 0$ (mod $qL$) as desired.

Now we show $\tfone^2 u \equiv 0$ (mod $qL$). By a similar argument
as above, we obtain
$$ f_1 k_{\ol 1} f_1 = f_1^{(2)}k_{\ol 1} +k_{\ol 1} f_1^{(2)}.$$
Then we have \begin{equation*}
\begin{aligned}
\tfone^{2}&=(k_{\ol 1}f_1 -q f_1 k_{\ol 1})^2 q^{2k_2 -1}\\
          &=( k_{\ol{1}} (f_1^{(2)}k_{\ol 1} +k_{\ol 1} f_1^{(2)})
          -q k_{\ol 1}f_1^2 k_{\ol 1} -q f_1 k_{\ol 1}^2 f_1 +q^2
          (f_1^{(2)}k_{\ol 1} +k_{\ol 1} f_1^{(2)}) k_{\ol 1})q^{2k_2
          -1}\\
          &=\dfrac{q-q^{-1}}{q+q^{-1}}f_1^2q^{2k_1+2k_2-2} .
\end{aligned}
\end{equation*}
It follows that
$$
\begin{array}{ll}
\tfone^2 u&=
\dfrac{q-q^{-1}}{q+q^{-1}}f_1^2 q^{ \langle 2k_1+2k_2, \mu-s \alpha_1 \rangle -2 }f_1^{(s)}v \\
  &=  \dfrac{q-q^{-1}}{q+q^{-1}}     q^{2 \langle k_1+ k_2 , \mu  \rangle-2}  [s+2] [s+1] f_1^{(s+2) } v.
\end{array}
$$
If $ \langle k_1-k_2, \mu \rangle < s+2$, then $f_1^{(s+2)}v=0$,
i.e., $\tfone^2 u \equiv 0 $ (mod $qL$). If $ \langle k_1-k_2, \mu
\rangle  \geq s+2 $, we have
$$ 2\langle k_1+k_2, \mu
\rangle -2 \geq  2 \langle k_1-k_2, \mu \rangle -2 \geq  2s+2.$$
Since $q^{2s+1}[s+2][s+1] \equiv 1$ mod $q\A$, we have
 $$q^{\langle 2k_1 +2k_2 , \mu \rangle -2} [s+2][s+1] \in
q \A,$$ which proves our assertion.
\end{proof}
 \end{prop}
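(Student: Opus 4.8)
The plan is to reduce the statement to a one-variable computation for the $\mathfrak{sl}_2$ generated by $e_1$ and $f_1$, then to rewrite $\teone^2$ and $\tfone^2$ as scalar operators, and finally to run a $q$-divisibility estimate in $\A$. Since $L$ is stable under all the even and odd Kashiwara operators, $\teone$ and $\tfone$ act on $L/qL$; and every weight vector $u\in L_\la$ has a $1$-string decomposition $u=\sum_{k\ge0}f_1^{(k)}u_k$ with $e_1u_k=0$, so by additivity it suffices to prove $\teone^2u\equiv\tfone^2u\equiv0\pmod{qL}$ when $u=f_1^{(s)}v$ with $e_1v=0$, $\wt(v)=\mu$ and $s\ge0$. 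If $f_1^{(s)}v=0$ there is nothing to prove, so I may assume $f_1^{(s)}v\ne0$, hence $s\le\langle h_1,\mu\rangle$; this bound will be needed at the end.

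The first real step is to extract a Serre-type identity. Multiplying the defining relation $k_{\ol 1}e_1-qe_1k_{\ol 1}=e_{\ol 1}q^{-k_1}$ by $e_1$ on the left and on the right, moving $q^{-k_1}$ past $e_1$ and using $e_1e_{\ol 1}=e_{\ol 1}e_1$ to eliminate the common term $e_1e_{\ol 1}q^{-k_1}$, one gets
$$e_1k_{\ol 1}e_1=e_1^{(2)}k_{\ol 1}+k_{\ol 1}e_1^{(2)},$$
and the mirror manipulation with $k_{\ol 1}f_1-qf_1k_{\ol 1}=-f_{\ol 1}q^{k_1}$ and $f_1f_{\ol 1}=f_{\ol 1}f_1$ yields $f_1k_{\ol 1}f_1=f_1^{(2)}k_{\ol 1}+k_{\ol 1}f_1^{(2)}$. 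Plugging these into the expansions of $\teone^2=(e_1k_{\ol 1}-qk_{\ol 1}e_1)^2q^{2k_1-1}$ and $\tfone^2=(k_{\ol 1}f_1-qf_1k_{\ol 1})^2q^{2k_2-1}$, the terms $k_{\ol 1}e_1^{(2)}k_{\ol 1}$ (resp.\ $k_{\ol 1}f_1^{(2)}k_{\ol 1}$) cancel, and using $k_{\ol 1}^2=\dfrac{q^{2k_1}-q^{-2k_1}}{q^2-q^{-2}}$ together with $q^{2k_1}e_1=q^2e_1q^{2k_1}$ to slide $k_{\ol 1}^2$ past the $e_1$'s (resp.\ $f_1$'s), the squares collapse to
$$\teone^2=\dfrac{q-q^{-1}}{q+q^{-1}}\,q^2e_1^2q^{4k_1},\qquad
\tfone^2=\dfrac{q-q^{-1}}{q+q^{-1}}\,f_1^2q^{2k_1+2k_2-2}.$$

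The last step is to evaluate on $u=f_1^{(s)}v$ and check the coefficients lie in $q\A$. Using the standard $U_q(\mathfrak{sl}_2)$ formulas $e_1^2f_1^{(s)}v=[\langle h_1,\mu\rangle-s+1][\langle h_1,\mu\rangle-s+2]f_1^{(s-2)}v$ and $f_1^2f_1^{(s)}v=[s+1][s+2]f_1^{(s+2)}v$, together with the elementary congruence $q^{2m-1}[m][m+1]\equiv1\pmod{q\A}$ for $m\ge1$, the coefficient of $\teone^2u$ reduces to a single power $q^{2(\langle h_1,\mu\rangle-s)+4\langle k_2,\mu\rangle+1}$ of $q$, whose exponent is $\ge1$ because $s\le\langle h_1,\mu\rangle$ and $\langle k_2,\mu\rangle\ge0$ (the latter holding since $\wt(M)\subset P^{\ge0}$); hence $\teone^2u\in qL$. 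For $\tfone^2u$, either $f_1^{(s+2)}v=0$ and we are done, or $\langle h_1,\mu\rangle\ge s+2$, in which case $2\langle k_1+k_2,\mu\rangle-2\ge2\langle h_1,\mu\rangle-2\ge2s+2$ and the same congruence puts the coefficient in $q\A$. I do not expect a conceptual obstacle; the only delicate points are the bookkeeping of powers of $q$ in this last step and the middle-step collapse of the $k_{\ol 1}^2$-terms (which really does use $q^{2k_1}e_1=q^2e_1q^{2k_1}$), and the one piece of representation-theoretic input is the positivity $\wt(M)\subset P^{\ge0}$ built into the definition of $\Oint$.
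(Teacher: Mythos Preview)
Your proposal is correct and follows essentially the same route as the paper's proof: the same reduction to $u=f_1^{(s)}v$, the same Serre-type identities $e_1k_{\ol 1}e_1=e_1^{(2)}k_{\ol 1}+k_{\ol 1}e_1^{(2)}$ and $f_1k_{\ol 1}f_1=f_1^{(2)}k_{\ol 1}+k_{\ol 1}f_1^{(2)}$, the same collapse of $\teone^2$ and $\tfone^2$ to scalar multiples of $e_1^2$ and $f_1^2$, and the same $q$-divisibility estimate using $s\le\langle h_1,\mu\rangle$ and $\langle k_2,\mu\rangle\ge0$. The only cosmetic difference is that you phrase the final coefficient as ``a single power of $q$'' while the paper keeps the unit $\dfrac{q-q^{-1}}{q+q^{-1}}\in\A^\times$ visible, which does not affect the argument.
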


\vskip 3mm

\begin{example}
Let $\V = \soplus_{j=1}^n \F v_{j} \oplus \soplus_{j=1}^n \F
v_{\ol j}$ be the vector representation of $U_q(\mathfrak{q}(n))$.
Set $$\mathbf{L} = \soplus_{j=1}^n \A v_{j} \oplus
\soplus_{j=1}^n \A v_{\ol j}\quad \text{and }l_{j} = \C v_{j} \oplus \C
v_{\ol j} \subset \mathbf{L}/ q \mathbf{L},$$ and let $\B$ be the
$\mathfrak{gl}(n)$-crystal with the $\bar 1$-arrow given below.

$$\xymatrix@C=5ex
{*+{\young(1)} \ar@<0.1ex>[r]^-{1}
\ar@{-->}@<-0.9ex>[r]_{\ol 1} & *+{\young(2)} \ar[r]^2 & *+{\young(3)} \ar[r]^3 & \cdots \ar[r]^{n-1} & *+{\young(n)} }.$$
Here, the actions of $\tfi$ $(i=1, \ldots, n-1, \ol 1)$ are
expressed by $i$-arrows. Then $(\mathbf{L}, \B,
l_{\B}=(l_j)_{j=1}^n)$ is a crystal basis of $\V$.
\end{example}

\begin{remark}  \label{rem:gln structure}
Let $M$ be a $\uqqn$-module in the category  $\Oint$
with a crystal basis $(L,B,l_{B})$,
and let $B=\coprod_{k=1}^s B_k$ be the decomposition of $B$
into 
connected $\mathfrak{gl}(n)$-crystals.
Then there exists a decomposition
$$M = \soplus_{k=1}^{s} \soplus_{j=1}^{m_k} M_{k,j} $$
of $M$ as a $U_q(\mathfrak{gl}(n))$-module, where \bna
\item $m_k=\dim l_b$ for some $b \in B_k$,
\item $M_{k,j}$ has a $U_q(\mathfrak{gl}(n))$-crystal basis $(L_{k,j}, B_{k,j})$ such that
\bni
\item $L=\soplus_{k,j}L_{k,j}$, 
\item there exists a $\mathfrak{gl}(n)$-crystal isomorphism $\phi_{k,j} \cl B_k \isoto B_{k,j}$ so that
the vectors $\phi_{k,j}(b)$ $(j=1, \ldots, m_k)$ form a basis of
$l_b$ for each $b \in B_k$. \ee \ee
\end{remark}

\begin{remark}
Let $M$ be a $\uqqn$-module in the category  $\Oint$
with a crystal basis $(L,B,l_{B})$.
For $i=1,\ldots,n-1,\ol1$ and $b$, $b'\in B$, if $b'=\tf_ib$ is satisfied,
then we have isomorphisms $\tf_i\cl l_b\isoto l_{b'}$ and
$\te_i\cl l_{b'} \isoto l_b$.
If $i=1,\ldots,n-1$, then they are inverses to each other
by Remark~\ref{rem:gln structure}.
However, when $i=\ol1$, they are not inverses to each other in general.
\end{remark}

The {\it tensor product rule} given in the following theorem is one
of the most important features of crystal basis theory.

\Th \label{th2:tensor product}
 Let $M_j$ be a $\uqqn$-module in $\Oint$ with a crystal basis
$(L_j, B_j, l_{B_j})$ $(j=1,2)$. Set $B_1\otimes B_2 = B_1 \times B_2$ and
$l_{b_1\otimes b_2}=l_{b_1} \otimes l_{b_2}$ for $b_1\in B_1$ and $b_2\in B_2$.
Then $(L_1 \otimes_{\A} L_2, B_1 \otimes B_2,(l_b)_{b\in B_1 \otimes B_2})$
is a crystal basis of $M_1 \otimes_{\F}M_2$,
where the action of the Kashiwara operators on $B_1\otimes B_2$
are given as follows:
\eq
&&\begin{aligned}
\tei(b_1 \otimes b_2) & = \begin{cases} \tei b_1 \otimes b_2 \ &
\text{if} \ \vphi_i(b_1) \ge \eps_i(b_2), \\
b_1 \otimes \tei b_2 \ & \text{if} \ \vphi_i(b_1) < \eps_i(b_2),
\end{cases} \\
\tfi(b_1 \otimes b_2) & = \begin{cases} \tfi b_1 \otimes b_2 \
& \text{if} \  \vphi_i(b_1) > \eps_i(b_2), \\
b_1 \otimes \tfi b_2 \ & \text{if} \ \vphi_i(b_1) \le \eps_i(b_2),
\end{cases}
\end{aligned}\\[2ex]
 \label{eq1:tensor product}
&&\begin{aligned}
\teone (b_1 \otimes b_2) & = \begin{cases} \teone b_1 \otimes b_2
& \text{if} \ \lan k_1, \wt b_2 \ran =   \lan k_2, \wt b_2 \ran =0, \\
b_1 \otimes \teone b_2   
&  \text{otherwise,}
\end{cases} \\
\tfone(b_1 \otimes b_2) & = \begin{cases} \tfone b_1 \otimes b_2
& \text{if} \ \lan k_1, \wt b_2 \ran = \lan k_2, \wt b_2 \ran =0, \\
b_1 \otimes \tfone b_2   
& \text{otherwise}.
\end{cases}\\
\end{aligned}
\label{eq2:tensor product}
\eneq
 \enth

\begin{proof}
It is obvious that
\begin{equation*}
\begin{aligned}
& (L_1\tensor L_2)/q(L_1 \tensor L_2) =\soplus_{b_1 \in B_1,  b_2
\in  B_2} l_{b_1} \tensor l_{b_2},\\
& l_{b_1} \tensor l_{b_2} \subset ((L_1 \tensor L_2 )/ q(L_1
\tensor L_2) )_{\la+\mu} \ \ \text{for} \ b_1 \in (B_1)_{\la}, b_2
\in (B_2)_{\mu}.
\end{aligned}
\end{equation*}

For $i=1, 2, \ldots, n-1$, our assertions were already proved in
\cite{Kas90, Kas91}. Let us show the $i= {\ol 1}$ case. The following
comultiplication formulas can be checked easily:
\begin{equation*}
\left\{\begin{aligned}
& \Delta(\tkone) = \tkone \otimes q^{2 k_1} + 1 \otimes \tkone, \\
& \Delta(\teone) = \teone \otimes q^{k_1 + k_2} + 1 \otimes \teone
 - (1-q^2) \tkone \otimes e_1 q^{2 k_1}, \\
& \Delta(\tfone) = \tfone \otimes q^{k_1 + k_2} + 1 \otimes \tfone
 - (1-q^{2}) \tkone \otimes f_1 q^{k_1 + k_2-1}.
\end{aligned}
\right.
\end{equation*}
Clearly, $L_1 \tensor L_2$ and $l_{b_1} \tensor l_{b_2}$ are
stable under $\Delta (\tkone)$ for all $b_1 \in B_1, b_2 \in B_2$.

We will show that $L_1 \otimes L_2$ is stable under
$\Delta(\teone)$ and $\Delta(\tfone)$.
Let $u_1 \in L_1$ and $u_2 \in L_2$.
Then the comultiplication formula implies
$$\Delta(\teone)(u_1 \otimes u_2) =
\teone u_1 \otimes q^{k_1 + k_2} u_2 \pm u_1 \otimes
\teone u_2  - (1-q^2) \tkone u_1 \otimes e_1 q^{2 k_1} u_2,$$
where $\pm$ is according that $u_1$ is even or odd.
It is obvious that the first two terms belong to $L_1 \otimes L_2$.
For the last term, we may assume that $u_2 = f_1^{(s)} v$ with
$e_1 v=0$. Then we have
\begin{equation*}
\begin{aligned}
e_1 q^{2k_1} u_2  & = e_1 q^{2k_1} f_1^{(s)} v = q^{2 \langle k_1,
\wt(v) - s \alpha_1 \rangle} [\langle k_1 - k_2, \wt(v) \rangle -
s +1] f_1^{(s-1)} v \\
& = q^{2 \langle k_1, \wt(v) \rangle -2s } [\langle k_1 - k_2,
\wt(v) \rangle -s + 1] \te_1 u_2 \\
&=\frac{q^{\langle 3k_1 - k_2, \wt(v) \rangle -3s +2} - q^{\langle
k_1 + k_2, \wt(v) \rangle -s}}{q^2 -1} \te_1 u_2.
\end{aligned}
\end{equation*}
Since
\eqn
&&\langle 3k_1 - k_2, \wt(v) \rangle -3s +2 = 3 (\langle k_1-k_2,
\wt(v) \rangle -s)+2 \langle  k_2, \wt(v) \rangle +2>0,\\
&&\langle k_1 + k_2 , \wt(v) \rangle -s =\lan k_1,\wt(u_2)\ran
+\lan k_2,\wt(v)\ran\ge\lan k_1,\wt(u_2)\ran\ge0,
\eneqn
If $\lan k_1,\wt(u_2)\ran=0$, then $f_1u_2=0$ and hence
$s=-\lan h_1,\wt(u_2)\ran= \lan k_2,\wt(u_2)\ran$. 
Thus we conclude 
\eq
&&\ba{l}
e_1 q^{2k_1} u_2\equiv \te_1u_2\pmod{L_2}\quad\text{if $\lan k_1,\wt(u_2)\ran=0$,}
\\[1ex]
e_1 q^{2k_1} u_2 \in qL_2\quad\text{if $\lan k_1,\wt(u_2)\ran>0$.}
\ea\label{eq:e1q}
\eneq
 Hence $L_1 \otimes L_2$ is
stable under $\Delta(\teone)$.

Similarly, one can show that $f_1 q^{k_1 + k_2 -1} L_2 \subset
L_2$, which implies $L_1 \otimes L_2$ is stable under
$\Delta(\tfone)$. Thus we have shown that $L_1\otimes L_2$
is stable under the Kashiwara operators. 

\medskip
We shall prove the tensor product rule. To prove the $\teone$-case,
let $u_1 \in l_{b_1}, u_2 \in l_{b_2}$,
and we consider the following three cases separately.

\vskip 3mm
\noindent
{\bf Case 1:} $\langle k_1, \wt(b_2) \rangle = \langle k_2,
\wt(b_2) \rangle =0$.

By the comultiplication formula, we have
$$\Delta(\teone)(u_1 \otimes u_2) = \teone u_1 \otimes u_2 \pm u_1
\otimes \teone u_2 - (1-q^2) \tkone u_1 \otimes e_1 u_2.$$
Since $\langle k_2, \wt(b_2) + \alpha_1 \rangle = \langle k_2, \wt(b_2)
+ \eps_1 - \eps_2 \rangle = -1 <0$, we must have $\teone u_2=e_1 u_2 =0$.
Hence $\Delta(\teone) (u_1 \otimes u_2) = \teone
u_1 \otimes u_2 $.

If $\teone =0$ on $l_{b_1}$, then $\teone \otimes 1 =0$ on $l_{b_1}
\otimes l_{b_2}$. If $\teone\cl l_{b_1} \rightarrow l_{\teone b_1}$ is
an isomorphism, then $\teone \otimes 1 \cl l_{b_1} \otimes l_{b_2}
\rightarrow l_{\teone b_1} \otimes l_{b_2}$ is also an isomorphism
as desired.

\medskip
\noindent
{\bf Case 2:} $\langle k_1, \wt(b_2) \rangle > 0$.

By the comultiplication formula and \eqref{eq:e1q}, we have
\begin{equation*}
\begin{aligned}
\Delta(\teone)(u_1 \otimes u_2) &
= \teone u_1 \otimes q^{\langle k_1 + k_2, \wt(b_2) \rangle} u_2
\pm u_1 \otimes\teone u_2 \\
& - (1-q^2) \tkone u_1 \otimes e_1q^{2k_1} u_2 \\
& \equiv \pm u_1 \otimes \teone u_2 \quad (\text{mod} \ q L_1 \otimes
L_2).\end{aligned}
\end{equation*}

\medskip
\noindent
{\bf Case 3:}  $\langle k_1, \wt(b_2) \rangle =0,  \ \ \langle k_2,
\wt(b_2) \rangle > 0$.

The comultiplication formula and \eqref{eq:e1q} yield
\begin{equation*}
\begin{aligned}
\Delta(\teone)(u_1 \otimes u_2) & = \teone u_1 \otimes q^{\langle
k_1 + k_2, \wt(b_2) \rangle} u_2 \pm u_1 \otimes \teone u_2 \\
& - (1-q^2) \tkone u_1 \otimes e_1q^{2k_1} u_2 \\
& \equiv \pm u_1 \otimes \teone u_2 -\tkone u_1 \otimes e_1 u_2 \quad
(\text{mod} \ q L_1 \otimes L_2).
\end{aligned}
\end{equation*} 
Since $\langle k_1, \wt(b_2) \rangle =0$ and $\tkone^2=(1-q^4)^{-1}(1-q^{4k_1})$, we have
$$k_{\bar 1}u_2 =0, \ \
\tkone^2 e_1 u_2 = \dfrac{1 - q^{4 k_1}}{1-q^4} e_1 u_2 = e_1 u_2.$$
It follows that
$$\teone u_2 = -q^{-1} (e_1 k_{\bar 1} - q k_{\bar 1} e_1) q^{k_1}
u_2 = k_{\bar 1} e_1 q^{k_1}u_2 = k_{\bar 1} q^{k_1 -1} e_1 u_2 = \tkone
e_1 u_2.$$ Hence we obtain
$$\tkone \teone u_2 = \tkone^2 e_1 u_2 = e_1 u_2, $$
which implies
$$
\begin{aligned}
\Delta(\teone)(u_1 \otimes u_2) &\equiv\pm u_1 \otimes \teone u_2 - \tkone
u_1 \otimes \tkone \teone u_2 \\
&\equiv 
(1 - \tkone \otimes \tkone) (1 \otimes \teone) (u_1 \otimes u_2).
\end{aligned}
$$
The operator $1 - \tkone \otimes \tkone$ on $l_{b_1}\otimes l_{\te_1b_2}$
is invertible because 
$(\tkone \otimes \tkone)^2=-\tkone^2 \otimes \tkone^2=
-(1-q^4)^{-1}(1 - q^{4 k_1})\otimes \id$ acts on $l_{b_1\otimes \teone b_2}$ by
the multiplication of a scalar different from $1$.
Hence the map $\Delta(\teone) \cl l_{b_1} \otimes l_{b_2}
\rightarrow l_{b_1} \otimes l_{\teone b_2}$, which is either 0 or an
isomorphism according that $\teone b_2=0$ or not.

The assertions on $\tfone$ can be verified in a similar manner.
The remaining property (b) (ii) in Definition~\ref{def:crystal base}
follows immediately from the formula \eqref{eq2:tensor product}.
\end{proof}

Motivated by the properties of crystal bases, we introduce the
notion of abstract crystals.

\Def An {\em abstract $\mathfrak{q}(n)$-crystal} is a
$\mathfrak{gl}(n)$-crystal together with the maps $\teone, \tfone\cl B
\to B \sqcup \{0\}$ satisfying the following conditions:
\bna
\item $\wt(B)\subset P^{\ge0}$,
\item $\wt(\teone b) = \wt(b) + \alpha_1$, $\wt(\tfone b) = \wt(b) -
\alpha_1$,

\item for all $b, b' \in B$, $\tfone b = b'$ if and only if $b = \teone b'$,

\item 
if $3\le i\le n-1$, we have
\be[{\rm(i)}]
\item
the operators $\teone$ and $\tfone$ commute with
$\te_i$ and $\tf_i$ ,

\item if $\teone b\in B$, then
$\eps_i(\teone b)=\eps_i(b)$ and $\vphi_i(\teone b)=\vphi_i(b)$.
\ee
\end{enumerate}
 \edf
Note that any crystal basis of $\uqqn$-modules in $\Oint$
satisfies the property (d) by Lemma~\ref{com:evenodd}.

Let $B_1$ and $B_2$ be abstract $\qn$-crystals. The {\em tensor
product} $B_1 \otimes B_2$ of $B_1$ and $B_2$ is defined to be the
$\mathfrak{gl}(n)$-crystal $B_1 \otimes B_2$ together with the maps
$\teone$, $\tfone$ defined by \eqref{eq2:tensor product}. Then it is
an abstract $\qn$-crystal.

The following associativity of the tensor product
is easily checked. 

\begin{prop}
Let $B_1, B_2$ and $B_3$ be abstract $\qn$-crystals. Then we have
$$(B_1 \otimes B_2) \otimes B_3 \simeq B_1\otimes (B_2 \otimes B_3).$$
\end{prop}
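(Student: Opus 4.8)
The plan is to verify the crystal isomorphism $(B_1 \otimes B_2) \otimes B_3 \simeq B_1 \otimes (B_2 \otimes B_3)$ by checking that the obvious set-theoretic bijection $(b_1 \otimes b_2) \otimes b_3 \mapsto b_1 \otimes (b_2 \otimes b_3)$ intertwines all the structure maps. For the underlying $\mathfrak{gl}(n)$-crystal structure --- the operators $\tei$, $\tfi$ for $i=1,\ldots,n-1$, and the functions $\eps_i$, $\vphi_i$, $\wt$ --- this is the classical associativity of tensor products of $\mathfrak{gl}(n)$-crystals, which is already known from \cite{Kas90, Kas91, Kas93}, so nothing new is needed there. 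It remains to check compatibility with the odd operators $\teone$ and $\tfone$ defined by \eqref{eq2:tensor product}.

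First I would record that, since $\wt$ is additive under $\otimes$ and the case-split in \eqref{eq2:tensor product} for $\teone$, $\tfone$ on $B \otimes B'$ depends only on $\wt(b')$ through the conditions $\lan k_1, \wt(b')\ran = \lan k_2, \wt(b')\ran = 0$, the recursion has a transparent ``leftmost nonzero tail'' shape analogous to the even case. Concretely, for a triple $b_1 \otimes b_2 \otimes b_3$ I would introduce the condition $C(b) : \lan k_1, \wt(b)\ran = \lan k_2, \wt(b)\ran = 0$ and observe that $C(b_2 \otimes b_3)$ holds if and only if both $C(b_2)$ and $C(b_3)$ hold (again by additivity of weights together with $\wt(b_2),\wt(b_3)\in P^{\ge 0}$, which forces each summand of $\lan k_i, \wt(b_2)+\wt(b_3)\ran$ to vanish). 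Using this, I would expand $\teone$ applied to $(b_1 \otimes b_2)\otimes b_3$ and to $b_1 \otimes (b_2 \otimes b_3)$ in the three cases according to whether $C(b_3)$ fails, $C(b_3)$ holds but $C(b_2)$ fails, or both $C(b_2)$ and $C(b_3)$ hold, and check that both sides give the same answer: it acts on the first tensor factor $b_j$ with the smallest index $j$ such that all factors to its right satisfy $C$, and on that factor by $\teone$. The computation for $\tfone$ is identical with $\teone$ replaced by $\tfone$.

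Finally I would verify that the bijection respects the abstract $\qn$-crystal axioms (a)--(d) of the preceding definition for the tensor product: (a) $\wt(B_1\otimes B_2\otimes B_3)\subset P^{\ge0}$ is immediate; (b) the weight shift formulas for $\teone,\tfone$ follow from the ones on each factor; (c) $\tfone b = b'$ iff $b=\teone b'$ follows from the corresponding property on the single factor that $\teone$/$\tfone$ acts on, together with the fact that applying $\teone$ or $\tfone$ to that factor does not change which factor is ``active'' (since $\teone$ shifts weight by $\alpha_1 = \eps_1-\eps_2$, it changes $\lan k_1,\wt\ran$ by $+1$ and $\lan k_2,\wt\ran$ by $-1$, but the active-factor selection only cares about the vanishing of weights strictly to the right, which are untouched); (d) the commutation of $\teone,\tfone$ with $\tei,\tfi$ for $3\le i\le n-1$ and the invariance of $\eps_i,\vphi_i$ under $\teone,\tfone$ on the tensor product follow from the same properties on each factor together with the tensor product formulas for $\tei,\tfi$, noting that $\tei,\tfi$ for $i\ge3$ and $\teone,\tfone$ act, via the respective case-splits, on factors whose selection rules do not interfere.

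The main obstacle --- really the only nonroutine point --- is the case analysis in the second paragraph: one must be careful that the selection of which tensor factor $\teone$ acts on is genuinely associative, i.e.\ that grouping $(B_1\otimes B_2)\otimes B_3$ versus $B_1\otimes(B_2\otimes B_3)$ does not shift the active factor. This hinges precisely on the observation that $C(b_2\otimes b_3)\iff C(b_2)\wedge C(b_3)$, which makes the nested case-splits collapse to the single rule ``act on the leftmost factor all of whose right-neighbors satisfy $C$.'' Once that lemma is in hand, all remaining checks are mechanical, exactly as in the classical $\mathfrak{gl}(n)$ case. I therefore expect the proof to be short, with the weight-additivity lemma for the condition $C$ doing all the work.
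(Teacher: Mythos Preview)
Your proposal is correct and matches the paper's approach: the paper states the result as ``easily checked'' and its (commented-out) proof does exactly the same three-case analysis on whether $\lan k_1,\wt(b_3)\ran=\lan k_2,\wt(b_3)\ran=0$ and then whether the same holds for $b_2$, using implicitly the same observation that $C(b_2\otimes b_3)\iff C(b_2)\wedge C(b_3)$ since weights lie in $P^{\ge0}$. Your third paragraph verifying axioms (a)--(d) is unnecessary: both $(B_1\otimes B_2)\otimes B_3$ and $B_1\otimes(B_2\otimes B_3)$ are already abstract $\qn$-crystals by the sentence preceding the proposition, so once the obvious bijection commutes with $\wt$, $\tei$, $\tfi$ (classical) and $\teone$, $\tfone$ (your paragraph two), it is automatically a crystal isomorphism.
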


\begin{example} \hfill

\bna
\item If $(L, B, l_{B})$ is a crystal basis of a $\Uq$-module $M$ in the
category $\Oint$, then $B$ is an abstract
$\qn$-crystal.

\item The crystal graph $\B$ of the vector representation $\V$
is an abstract $\qn$-crystal.

\item By the tensor product rule, $\B^{\otimes N}$ is an abstract
$\qn$-crystal. When $n=3$, the $\qn$-crystal structure of $\B
\otimes \B$ is given below.

$$\xymatrix
{*+{\young(1) \otimes \young(1)} \ar[r]^1 \ar@{-->}[d]^{\ol 1} &
 *+{\young(2) \otimes \young(1)} \ar@<-0.5ex>[d]_1 \ar@{-->}@<0.5ex>[d]^{\ol 1} \ar[r]^2&
 *+{\young(3) \otimes \young(1)} \ar@<-0.5ex>[d]_1 \ar@{-->}@<0.5ex>[d]^{\ol 1} \\
 *+{\young(1) \otimes \young(2)} \ar[d]^2 &
 *+{\young(2) \otimes \young(2)} \ar[r]_2 &
 *+{\young(3) \otimes \young(2)} \ar[d]^2 \\
 *+{\young(1) \otimes \young(3)} \ar@{-->}@<-0.5ex>[r]_{\ol 1} \ar@<0.5ex>[r]^{1} &
 *+{\young(2) \otimes \young(3)} &
 *+{\young(3) \otimes \young(3)}
 }$$

\item For a strict partition $\la = (\la_1 > \la_2 > \cdots > \la_r
>0)$, 
let $Y_{\la}$ be the skew Young diagram having $\la_1$ many
boxes in the principal diagonal, $\la_2$ many boxes in the second
diagonal, etc.
For example, if $\la=(7 > 6 > 4 > 2 > 0)$, then we
have

$$Y_{\la} = \young(::::::\hfill,:::::\hfill\hfill,::::\hfill\hfill\hfill,:::\hfill\hfill\hfill\hfill,::\hfill\hfill\hfill\hfill,:\hfill\hfill\hfill,\hfill\hfill) \quad.$$

Let $\B(Y_{\la})$ be the set of all semistandard tableaux of shape
$Y_{\la}$ with entries from $1, 2, \ldots, n$. Then by an {\em
admissible reading} introduced in \cite{BKK}, $\B(Y_{\la})$ can be
embedded in $\B^{\otimes N}$, where $N=\la_1 + \cdots + \la_r$,
and it is stable under the Kashiwara operators $\tei,\tfi$ ($i=1,
\cdots, n-1,\ol1$). Hence it becomes an abstract
$\qn$-crystal. Moreover, the $\qn$-crystal structure thus obtained
does not depend on the choice of admissible reading.

Indeed, since $Y_\la$ is a skew Young diagram, it is stable under
the even Kashiwara operators, and the $\gl(n)$-crystal structure
does not depend on the choice of admissible reading. Let $T$ be a
semistandard tableau of shape $\la$ and let $\beta$ be the
lowest
box with entry $1$ in the principal diagonal of $T$. Since a
box with entry $1$ must lie in the principal diagonal of $T$, every
box with entry $1$ except $\beta$ lies in the northeast of
$\beta$. Let $\psi\cl \B(Y_\la) \rightarrow \B^{\tensor N}$ be an
admissible reading. It follows that $\beta$ is the rightmost box
with entry $1$ in $\psi(T)$. If there is a box, say $\gamma$, with
entry $2$ in the southwest of $\beta$ in $T$, then $\gamma$ must
appear after $\beta$ in $\psi(T)$. Thus we get $\tfone (\psi(T)) =0$. If
there is no box with entry $2$ in the southwest of $\beta$ in $T$,
then we know that every box with entry $2$ must lie in the
northeast of $\beta$ in $T$, and hence there is no box with entry
$2$ after $\beta$ in $\psi(T)$. Thus $\tfone$ acts on $\beta$.
Since the entry of the right box of $\beta$ in $T$ is greater than or
equal to $2$,
 we have $\tfone (\psi(T))=\psi(T')$, where $T'$ is the semistandard tableau of shape $\la$ obtained from $T$
 by replacing the entry of $\beta$ from $1$ to $2$.
It follows that $\B(Y_\la)$ is stable under the action of $\tfone$ and
it does not depend on the choice of admissible reading.

Let $\delta$ be the leftmost box with entry $2$ in $T$. If
$\delta$ lies in the second diagonal, the entry of the box lying
in the left of $\delta$ must be $1$. Then, for any admissible
reading $\psi$, $\teone \psi(T) = 0$. Thus we may assume that $\delta$ lies in
the principal diagonal of $T$,
and our assertion on $\teone$ follows from similar arguments as above.

In Figure~\ref{fi:B(Y_(3,1,0))}, we illustrate the crystal
$\B(Y_\la)$ for $n=3$ and $\la=(3>1>0)$. Note that it is
connected. However, in general, $\B(Y_\la)$ is not connected.
   \ee
\end{example}
\begin{figure}[!h]
 $$\scalebox{.8}{\xymatrix@R=1pc@H=1pc{ & &  {\young(::1,:12,1)} \ar[dl]_1 \ar_2[d]  \ar^{\ol 1}@{-->}[dr] & & & \\
& {\young(::1,:22,1)} \ar@<-0.5ex>_1[dl] \ar@<1ex>^{\ol 1}@{-->}[dl]
\ar_2[d] & {\young(::1,:13,1)} \ar_1[d]\ar^{\ol 1}@{-->}[dr] &
{\young(::1,:12,2)} \ar_2[d] & & \\
{\young(::1,:22,2)} \ar_2[d] &
{\young(::1,:23,1)}\ar@<-0.5ex>_1 [dl] \ar@<1ex>^{\ol 1}@{-->}[dl] \ar_2[d]
& {\young(::2,:13,1)}  \ar_1 [d] \ar^{\ol 1}@{-->}[dr] & {\young(::1,:13,2)}
\ar_1 [d] \ar_2[dr] & & {\young(::1,:12,3)} \ar@<-0.5ex>_1 [d]
\ar@<1ex>^{\ol 1}@{-->}[d] \\
 {\young(::1,:23,2)} \ar_2[d] & {\young(::1,:33,1)} \ar^{\ol 1}@{-->}[dl] \ar_1 [d] & {\young(::2,:23,1)} \ar@<-0.5ex>_1 [d]
\ar@<1ex>^{\ol 1}@{-->}[d] \ar_2[dl] & {\young(::2,:13,2)}
 \ar_2[d] & {\young(::1,:13,3)} \ar_1 [dl] \ar^{\ol 1}@{-->}[dr] & {\young(::1,:22,3)} \ar_2[d] \\
 {\young(::1,:33,2)} \ar_2[dr] & {\young(::2,:33,1)} \ar@<-0.5ex>@{-->}_{\ol 1}[dr] \ar@<1ex>^{1}[dr]  &{\young(::2,:23,2)} \ar_2[d] & {\young(::2,:13,3)} \ar@<-0.5ex>_1 [d] \ar@<1ex>^{\ol 1}@{-->}[d] & & {\young(::1,:23,3)} \\
   & {\young(::1,:33,3)} \ar@<-0.5ex>@{-->}_{\ol 1}[dr] \ar@<1ex>^{1}[dr]       &  {\young(::2,:33,2)} \ar_2[d] &   {\young(::2,:23,3)} &  &  \\
   & &{ \young(::2,:33,3)}&& &}}$$
\caption{${\mathbf B}(Y_\la)$ for $n=3$, $\la = (3>1>0)$} \label{fi:B(Y_(3,1,0))}
   \end{figure}

Let $B$ be an abstract $\qn$-crystal. For $i = 1,2,\ldots, n-1$,
we define the automorphism $S_i$ on $B$ by
\eq
&&S_i b =
\begin{cases}
\tfi^{\langle h_i, \wt b \rangle} b & \text{if} \quad {\langle h_i, \wt b \rangle} \geq 0, \\
\tei^{-\langle h_i, \wt b \rangle} b & \text{if} \quad {\langle h_i, \wt b \rangle} \leq 0.
\end{cases}\label{def:Sicr}
\eneq
Let $w$ be an element of the Weyl group $W$ of $\gl(n)$. Then, as
shown in \cite{Kas94}, there exists a unique action $S_{w} \cl B \to
B$ of $W$ on $B$ such that $S_{s_i}=S_i$ for $i=1,2,\ldots,n-1$. Note that
$\wt(S_w b)=w(\wt(b))$ for any $w \in W$ and $b \in B$.

For $i=1, \ldots, n-1$, we set
\eq
&&w_i = s_2 \cdots s_{i} s_1 \cdots s_{i-1}.
\label{def:wi}
\eneq
Then $w_i$ is the shortest element in $W$ such that $w_i(\alpha_i) = \alpha_1$.
We define the {\em odd Kashiwara operators} $\teibar$, $\tfibar$ $(i=2, \ldots, n-1)$ by
$$\teibar = S_{w_i^{-1}} \teone
S_{w_i}, \ \ \tfibar = S_{w_i^{-1}} \tfone S_{w_i}.$$
We say that
$b \in B$ is a {\em highest weight vector} if $\tei b = \teibar b
=0$ for all $i=1, \ldots, n-1$.

\vs{2ex}
\begin{remark}\label{rem:teibar}
These actions can be lifted to actions on $\uqqn$-modules.
Let $M$ be a $\uqqn$-module in $\Oint$.
For each $i=1,\ldots, n-1$, we have
$$M=\soplus_{\substack{\ell\ge k\ge0,\\\la\in P,\; \lan h_i,\la\ran=\ell}}
f_i^{(k)}\bl(\Ker(e_i)_\la\br).
$$
Hence we can define the endomorphism $S_i$ of $M$ by
\eq
S_i(f_i^{(k)}u)=f_i^{(\ell-k)}u\quad\text{for $u\in\Ker(e_i)_\la$.}
\label{def:Si}
\eneq
Then $S_i^2=\id_M$ and we have
$S_i(M_\la)=M_{s_i\la}$.
If $(L,B,l_B)$ is a crystal basis of $M$, then
$L$ is stable under $S_i$, and
$S_i$ induces an action on $L$ and $L/qL$.
Obviously, we have $S_i(l_b)=l_{S_ib}$ for $b\in B$, where $S_ib$
is defined in \eqref{def:Sicr}.
We define the endomorphisms $\te_{\ol i}$ and $\tf_{\ol i}$ of $M$ by
\eq
\ba{rcl}
\te_{\ol i}&=&(S_2\cdots S_iS_1\cdots S_{i-1})^{-1}\circ
\teone\circ(S_2\cdots S_iS_1\cdots S_{i-1}),\\[1ex]
\tf_{\ol i}&=&(S_2\cdots S_iS_1\cdots S_{i-1})^{-1}\circ
\tfone\circ(S_2\cdots S_iS_1\cdots S_{i-1}).
\ea\label{def:efibar}
\eneq
Then we have
$$\text{$\te_{\ol i}M_\mu\subset M_{\mu+\alpha_i}$ and
$\tf_{\ol i}M_\mu\subset M_{\mu-\alpha_i}$ for every $\mu\in P^{\ge0}$.}$$
Let $(L,B,l_B)$ be a crystal basis of $M$.
Then $L$ is stable under the action of $\te_{\ol i}$, and
$\te_{\ol i}$ induces an action on $L/qL$,
and we have
\eqn
&&\left\{
\parbox{\mylength}{
(i) if $\te_{\ol i}b\not=0$, then $\te_{\ol i}$ induces an isomorphism
$l_{b}\isoto l_{\te_{\ol i}b}$,\\[1.5ex]
(ii) if $\te_{\ol i}b=0$, then $\te_{\ol i}(l_b)=0$.
}
\right.
\eneqn
Similar properties hold for $\tf_{\ol i}$. Note that
$$
\begin{aligned}
\Ker(\teibar\cl L/qL\to L/qL) =& \Ker (\teone S_{w_i}) = S_{w_i}^{-1}(\Ker \teone)
= S_{w_i^{-1}}(\Ker \teone)\\
=& S_{w_i^{-1}} \Big( \soplus_{\teone b=0} l_{b} \Big)
             = \soplus_{\teone b =0} l_{S_{w_i^{-1}} b}
=\soplus_{\teone S_{w_i} b =0} l_b = \soplus_{\teibar b = 0} l_b .
\end{aligned}
$$
\end{remark}

\begin{example}
Let $\la$ be a strict partition.
Observe that $\B(Y_\la)$ has a unique element of weight $\la$, say $b_{Y_\la}$.
Since $\la + \alpha_i \notin \wt(\B(Y_\la))$ for any $i=1, 2, \ldots, n-1$,
$b_{Y_\la}$ is a highest weight vector.
Thus, for each admissible reading $\psi$, we see that $\psi(b_{Y_\la})$ is a highest weight vector
in $\B^{\tensor N}$.
\end{example}

\begin{lemma} \label{le:existence of h.w. vectors}
Every abstract $\q(n)$-crystal contains a highest weight vector.
\begin{proof}
Recall that $\la \in \wt(B)\seteq\set{\wt(b)}{b \in B}$
is called {\it maximal} if
$\la + \alpha_i \notin \wt(B)$ for $i=1,2,\ldots,n-1$.
Since $\wt(\teibar b)=\wt(b)+ \alpha_i $,
a vector in a crystal $B$ with a maximal weight is a highest weight vector.
Because $\wt(B)$ is a finite set,
there exists a maximal element $\lambda$ so that
we have an element $b \in B$ with a maximal weight $\lambda$.
\end{proof}
\end{lemma}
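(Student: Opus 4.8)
The plan is to produce a highest weight vector as simply a vector whose weight is maximal among all the weights occurring in $B$. First I would observe that, since $B$ is by definition a $\gl(n)$-crystal, it is a finite set, hence so is its weight set $\wt(B)\seteq\set{\wt b}{b\in B}$. Call $\la\in\wt(B)$ \emph{maximal} if $\la+\alpha_i\notin\wt(B)$ for every $i=1,\dots,n-1$. A maximal weight exists: fix a linear functional $\xi$ on $\mathbb{R}\otimes_\Z P$ with $\xi(\alpha_i)>0$ for all $i$, and pick $\la\in\wt(B)$ at which $\xi$ attains its maximum over the finite (nonempty) set $\wt(B)$; if $\la+\alpha_i$ belonged to $\wt(B)$ it would have strictly larger $\xi$-value, a contradiction. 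Fix any $b\in B$ with $\wt b=\la$.

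Next I would record how the raising operators appearing in the definition of a highest weight vector act on weights. For $\tei$ this is axiom (i) of a $\gl(n)$-crystal: $\wt(\tei b)=\wt b+\alpha_i$ whenever $\tei b\neq0$. For the odd operators $\teibar=S_{w_i^{-1}}\teone S_{w_i}$ I would combine $\wt(S_wc)=w(\wt c)$ (valid for all $w\in W$ and $c\in B$) with $\wt(\teone c)=\wt c+\alpha_1$ and the defining property $w_i(\alpha_i)=\alpha_1$ of $w_i$, to get, whenever $\teibar b\neq0$,
\[
\wt(\teibar b)=w_i^{-1}\bigl(w_i(\wt b)+\alpha_1\bigr)=\wt b+w_i^{-1}(\alpha_1)=\wt b+\alpha_i .
\]
The case $i=1$ is immediate since $w_1$ is the identity, so $\teone$ itself raises the weight by $\alpha_1$.

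The conclusion is then immediate. If $\tei b\neq0$ for some $i\in\{1,\dots,n-1\}$ then $\la+\alpha_i=\wt(\tei b)\in\wt(B)$, contradicting the maximality of $\la$; hence $\tei b=0$ for all such $i$. Running the same argument with $\teibar$ in place of $\tei$ gives $\teibar b=0$ for all $i$ as well, so $b$ is a highest weight vector by definition. I do not expect any serious obstacle: the only points that deserve a line of care are (a) deducing the existence of a maximal weight honestly from the finiteness of $\wt(B)$ (via the functional $\xi$) rather than from a nonexistent ordering of $P$, and (b) the short weight computation $\wt(\teibar b)=\wt b+\alpha_i$, which reduces entirely to the already-recorded facts $\wt(S_wc)=w(\wt c)$ and $w_i(\alpha_i)=\alpha_1$.
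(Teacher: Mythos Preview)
Your proof is correct and follows essentially the same approach as the paper's: pick $b$ of maximal weight and observe that both $\tei$ and $\teibar$ would raise the weight by $\alpha_i$, forcing them all to vanish. You are simply more explicit on two points the paper leaves implicit, namely the existence of a maximal weight (your functional $\xi$) and the verification that $\wt(\teibar b)=\wt b+\alpha_i$ via $S_w$ and $w_i(\alpha_i)=\alpha_1$.
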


\begin{remark} \label{re:Clifford algebras}
\bna
\item Let $\la$ be a strict partition with $\ell(\la) =r$
and let $M$ be a highest weight module of
highest weight $\la$ in $\Oint$. Set $\tki =
q^{k_i-1} k_{\ol i}$ for $i=1,2,\ldots, n$. 
Since $M \in \Oint$, we have $\tki=0$ on $M_\la$ for $i  > r$.
Note that $\tki^2=\dfrac{1-q^{4\la_i}}{1-q^4}$ on $M_\la$
and
 $\Big (\dfrac{1-q^{4\la_i}}{1-q^4} \Big)^{-\frac{1}{2}} \in \A \subset \F$.
Let
$$C_i \seteq \Big (\dfrac{1-q^{4\la_i}}{1-q^4} \Big)^{-\frac{1}{2}} \tki.$$
Then on $M_\la$, we have
\eq \label{defining relations of Clifford algebra}
C_i^2=1,  \ C_i C_j + C_j C_i = 0 \ (i \neq j).
\eneq
Thus $M_\lambda$ can be regarded as a module over $\F[C_1,\ldots,C_r]$,
where $\F[C_1,\ldots,C_r]$ is the associative $\F$-algebra generated by $\set{C_i}{i=1,2,\ldots,r}$
with the defining relations \eqref{defining relations of Clifford algebra}.

\item
Let $\C[C_1,\ldots,C_r]$ and $\A[C_1,\ldots,C_r]$ be the
associative $\C$-algebra  and $\A$-algebra, respectively,
generated by $\set{C_i}{i=1,2,\ldots,r}$ with the defining
relations \eqref{defining relations of Clifford algebra}. For a
superring
 $R$, we define $\Mod(R)$ and $\SMod(R)$ to be the category of
$R$-modules and the category of $R$-supermodules, respectively. 

If $r$ is odd, then we have the following commutative diagram :
$$
\xymatrix{
\Mod(\A) \ar[r]^(.34){\sim} \ar[d]^{\F \tensor_\A ( - )} & \SMod(\A[C_1,\ldots, C_r]) \ar[d]^{\F \tensor_\A ( - )}\\
\Mod(\F) \ar[r]^(.34){\sim}        & \SMod(\F[C_1,\ldots, C_r]),
}
$$
If $r$ is even, then we have the following commutative diagram :
$$\xymatrix{
\SMod(\A) \ar[r]^(.36){\sim} \ar[d]^{\F \tensor_\A ( - )} & \SMod(\A[C_1,\ldots, C_r]) \ar[d]^{\F \tensor_\A ( - )}\\
\SMod(\F) \ar[r]^(.36){\sim}        & \SMod(\F[C_1,\ldots, C_r]).
}
$$
In both cases, the horizontal arrows are given by
$$K \mapsto V \tensor_\C K$$
for each module $K$ in the left hand side,
where  $V$ denotes an irreducible supermodule over $\C[C_1,\ldots,C_r]$.
\ee
\end{remark}

To summarize, we obtain the following proposition.
\begin{prop} \label{prop:uniqueness of lattice of highest weight space of an irreducible module}
\bna
\item For a strict partition $\la \in \Lambda^+$ with $l(\la)=r$, let $\rm HT(\la)$ be the category of highest weight modules with highest weight $\la$
in $\Oint$.
Then $\rm HT(\la)$ is equivalent
to $\SMod(\F[C_1,\ldots,C_r])$, where the equivalence is given by
$${\rm HT(\la)} \ni M \mapsto M_\la \in \SMod(\F [C_1,\ldots,C_r]).$$
In particular, the homomorphism
$\End_{\uqqn}(M)\to\End_{\F[C_1,\ldots,C_r]}(M_\la)$
is an isomorphism  for any $M\in \rm HT(\la)$.
\item  For a $\uqqn$-module $M \in \rm HT(\la)$, let $L$, $L'$
be finitely generated free $\A$-submodules of $M_\la$
such that

\quad {\rm (i)} $L$ and $L'$ are stable under $\tki$'s
$(i=1,2,\ldots, n)$,

\quad {\rm (ii)} $\F \otimes_\A L \simeq \F \otimes_\A L'\simeq M_\la$.
 \ee
Then there exists a $\uqqn$-module automorphism $\vphi$ of $M$ such
that $\vphi L = L'$.
\end{prop}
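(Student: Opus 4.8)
The plan is to establish (a) first and then deduce (b) from it.

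\smallskip
\noindent\emph{Part (a).} The functor in question is $\Phi\cl\mathrm{HT}(\la)\to\SMod(\F[C_1,\ldots,C_r])$, $M\mapsto M_\la$, with restriction to the $\la$-weight space on morphisms; it is well defined by Remark~\ref{re:Clifford algebras}(a), since restrictions of $\uqqn$-homomorphisms commute with $q^{k_i}$ and $k_{\ol i}$, hence with the $C_i$. First I would note that $\Phi$ is faithful: a $\uqqn$-homomorphism killing $M_\la$ kills $\uqqn M_\la=M$. Now $\SMod(\F[C_1,\ldots,C_r])$ is semisimple, and so is $\mathrm{HT}(\la)$ (its objects are direct sums of copies of $V(\la)$ and $\Pi V(\la)$, since $\Oint$ is semisimple and a highest weight module with highest weight $\la$ has $\la$ as its unique maximal weight). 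Thus it remains to check that $\Phi$ matches up simple objects and $\Hom$-spaces between them. By the Proposition quoted from \cite{GJKK} the simples of $\mathrm{HT}(\la)$ are $V(\la)$ and $\Pi V(\la)$, and a short argument with the triangular decomposition shows $V(\la)_\la$ has no proper nonzero $U^0$-submodule, i.e.\ it is a simple $\F[C_1,\ldots,C_r]$-supermodule; hence $\Phi$ carries $\{V(\la),\Pi V(\la)\}$ bijectively onto the simple $\F[C_1,\ldots,C_r]$-supermodules listed in the Proposition above (these coincide with the simple $\mathrm{Cliff}_q(\la)$-modules, the generators $t_{\ol j}$ with $j>r$ spanning a nilpotent ideal). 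Essential surjectivity then follows because $\Phi$ commutes with finite direct sums. For fullness I only need isomorphisms on $\Hom$-spaces between $V(\la)$ and $\Pi V(\la)$, and the one nontrivial point is $\End_{\uqqn}(V(\la))\to\End_{\F[C_1,\ldots,C_r]}(V(\la)_\la)$, an injective algebra map: when $\ell(\la)$ is even the target is $\F$ and the scalars are in the image, so it is onto; when $\ell(\la)$ is odd the target is a two-dimensional super division algebra $\F\oplus\F\eta$ with $\eta$ odd, and one must exhibit an odd $\uqqn$-endomorphism of $V(\la)$ — the quantum form of the classical fact that the irreducible $\qn$-module of a strict partition of odd length is of type Q — which comes from \cite{GJKK}, or from the classical limit (Proposition~\ref{prop:classical limit} and Remark~\ref{rem:Grothendieck rings}), or from Schur--Weyl--Sergeev duality together with Corollary~\ref{cor:Vtens}. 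With this $\Phi$ is an equivalence, and the ``in particular'' clause is fullness and faithfulness applied to $\Hom_{\uqqn}(M,M)$.

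\smallskip
\noindent\emph{Part (b).} Write $C=\F[C_1,\ldots,C_r]$. Since $C_i=u_i\tki$ with $u_i=\Big(\dfrac{1-q^{4\la_i}}{1-q^4}\Big)^{-1/2}\in\A^\times$ for $i\le r$ and $\tki|_{M_\la}=0$ for $i>r$, an $\A$-submodule of $M_\la$ is stable under all the $\tki$'s if and only if it is an $\A[C_1,\ldots,C_r]$-submodule; thus $L$ and $L'$ are finitely generated free $\A[C_1,\ldots,C_r]$-sub-supermodules of $M_\la$ spanning it over $\F$. By (a) the restriction map $\Aut_{\uqqn}(M)\to\Aut_{C}(M_\la)$ is a group isomorphism, so it suffices to build a $C$-supermodule automorphism $\psi$ of $M_\la$ with $\psi(L)=L'$ and then take the $\uqqn$-automorphism $\vphi$ with $\vphi|_{M_\la}=\psi$.

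\smallskip
To build $\psi$ I would invoke Remark~\ref{re:Clifford algebras}(b): its commuting square identifies $\SMod(\A[C_1,\ldots,C_r])$ with $\Mod(\A)$ (if $r$ is odd) or $\SMod(\A)$ (if $r$ is even), compatibly with $\F\otimes_\A(-)$ and with the corresponding identification over $\F$, via $K\mapsto V\otimes_\C K$ for $V$ the irreducible $\C[C_1,\ldots,C_r]$-supermodule. Hence $M_\la=V\otimes_\C W$, $L=V\otimes_\C K_L$, $L'=V\otimes_\C K_{L'}$ for an $\F$-(super)space $W$ and finitely generated free $\A$-(super)modules $K_L,K_{L'}$, together with isomorphisms $\F\otimes_\A K_L\isoto W\isoto\F\otimes_\A K_{L'}$ realizing $K_L,K_{L'}$ as full-rank lattices in $W$. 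Over the principal ideal domain $\A=\C[[q]]$, elementary-divisor theory (applied degreewise when $r$ is even) produces a basis $w_1,\ldots,w_m$ of $K_L$ and scalars $d_1,\ldots,d_m\in\F^\times$ with $d_1w_1,\ldots,d_mw_m$ a basis of $K_{L'}$; so $\psi_W\in\mathrm{GL}_\F(W)$, $w_i\mapsto d_iw_i$ (degree-preserving when $r$ is even), sends $K_L$ onto $K_{L'}$. Then $\psi=\mathrm{id}_V\otimes\psi_W$ is an even $C$-supermodule automorphism of $M_\la$ carrying $L$ to $L'$, and the automorphism $\vphi$ of $M$ with $\vphi|_{M_\la}=\psi$ satisfies $\vphi L=L'$, as required.

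\smallskip
\noindent\emph{Expected main obstacle.} Everything in (b) is routine once (a) and Remark~\ref{re:Clifford algebras} are available — it is linear algebra over a PID. The genuine difficulty lies entirely in (a), and within it in the single point of identifying $\End_{\uqqn}(V(\la))$ with $\End_{\mathrm{Cliff}_q(\la)}(V(\la)_\la)$ when $\ell(\la)$ is odd, i.e.\ of exhibiting an odd self-intertwiner of $V(\la)$; this is where one must appeal to \cite{GJKK}, the classical limit, or Sergeev duality rather than to formal category theory.
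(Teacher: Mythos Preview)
Your argument is correct and matches the paper's intent: the paper gives no proof beyond the phrase ``To summarize, we obtain the following proposition,'' so the statement is meant to fall out of Remark~\ref{re:Clifford algebras} together with standard highest-weight bookkeeping. Your treatment of (b) via the equivalences in Remark~\ref{re:Clifford algebras}(b) and elementary divisors over the PID $\A$ is exactly what is intended.

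One remark on the ``main obstacle'' you single out in (a): it dissolves without any appeal to \cite{GJKK}, the classical limit, or Sergeev duality. When $r=\ell(\la)$ is odd, the simple Clifford supermodule $V(\la)_\la\cong E^q(\la)$ carries an odd automorphism $\eta$ by the proposition on $\mathrm{Cliff}_q(\la)$ quoted just before Remark~\ref{re:Clifford algebras}. View $V(\la)_\la$ as a $U^{\ge0}$-module with $U^+$ acting by zero and extend $\eta$ by functoriality to an odd $\uqqn$-automorphism of the generalized Verma module $\uqqn\otimes_{U^{\ge0}}V(\la)_\la$. Its unique maximal submodule---the sum of all submodules missing the $\la$-weight space---is preserved by any automorphism, so $\eta$ descends to an odd $\uqqn$-automorphism of the irreducible quotient $V(\la)$. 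Hence the injection $\End_{\uqqn}(V(\la))\hookrightarrow\End_{\F[C_1,\ldots,C_r]}(V(\la)_\la)$ is already surjective once you know $V(\la)_\la$ is simple over the Clifford algebra, which you have established; no external input is needed. With this in hand, the whole of (a) is indeed the routine semisimple-category argument you wrote, and (b) is linear algebra.
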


\vskip 1cm

\section{Highest weight vectors in $\B^{\tensor N}$}

In this Section, we will give algebraic and combinatorial
characterizations of highest weight vectors in the abstract $\q(n)$-crystal
$\B^{\tensor N}$.

\begin{definition}
Let $B$ be an abstract $\q(n)$-crystal.
\bni
\item An element $b \in B$ is called a \emph{$\gl(a)$-highest weight vector} if $\te_i b = 0$ for $1 \leq i < a \leq n$.
\item An element $b \in B$ is called a \emph{$\q (a)$-highest weight vector} if $\te_i b = \teibar b = 0$ for $1 \leq i < a \leq n$.
\end{enumerate}
\end{definition}
\noindent
In particular, a highest weight vector in $B$ is a $\q(n)$-highest weight vector.\\

From now on, we denote by $\tensor_{j \geq m \geq i} (r_1 \ r_2
\cdots r_m )^{\tensor y_m} $ the following vector in $\B^{\tensor
N}$ :
$$\begin{array}{l}
\underbrace{(r_1 \tensor \cdots \tensor r_j) \tensor
\cdots \tensor (r_1 \tensor \cdots \tensor r_j)}_{y_j - \rm{times}}
\tensor
\underbrace{(r_1 \tensor \cdots \tensor r_{j-1}) \tensor
\cdots \tensor (r_1 \tensor \cdots \tensor r_{j-1})}_{y_{j-1} - \rm{times}}
\tensor \\[.5ex]
\cdots \tensor \underbrace{(r_1 \tensor \cdots \tensor r_{i+1}) \tensor
\cdots \tensor (r_1 \tensor \cdots \tensor r_{i+1})}_{y_{i+1} - \rm{times}}
\tensor \underbrace{(r_1 \tensor \cdots \tensor r_i) \tensor
\cdots \tensor (r_1 \tensor \cdots \tensor r_i)}_{y_i - \rm{times}},
\end{array}
$$
where $N = \sum^{j}_{m=i} m y_m$.

Let $b$ be an element of a $\gl(n)$-crystal $B$. We denote
by $C(b)$ the connected component of $B$ containing $b$.

\begin{definition}
Let $B_i$ be a $\gl(n)$-crystal and let $b_i \in B_i$
$(i=1,2)$. We say that $b_1$ is \emph{$\gl(n)$-crystal equivalent}
to $b_2$ if there exists an isomorphism of $\gl(n)$-crystals $C(b_1)
\isoto C(b_2)$ sending $b_1$ to $b_2$.
\end{definition}

Recall that $w_i = s_2 \cdots s_{i} s_1 \cdots s_{i-1}$.
\begin{lemma} \label{le:swib0}
Let $B$ be a $\gl(n)$-crystal.
 \bna
\item A vector $b_0$ in $\B \tensor B$ is a
$\gl(n)$-highest weight vector if and only if
 $b_0=1 \tensor \tf_1 \cdots \tf_{j-1} b$ for some $j \in \{1,2,\ldots, n\}$ and
some $\gl(n)$-highest weight vector $b \in B$ such that $\wt(b_0) =
\wt(b)+\eps_j$ is a partition.

\item Let $b$ be a $\gl(n)$-highest weight vector in $B$ and 
$j \in \{1,2,\ldots, n\}$.
If $\wt(b)+\eps_j$ is a partition, then
$b_0=1\tensor \tf_1 \cdots \tf_{j-1} b$ is a $\gl(n)$-highest weight
vector in $\B \tensor B$ and we have
$$
S_{w_i} b_0 =
\begin{cases}
3 \tensor \tf_3 \cdots \tf_{j+1} S_{w_i} b & \text{if} \quad j+1 \leq i < n, \\
1 \tensor S_{w_i} b & \text{if} \quad i=j, \\
1 \tensor \tf_1 S_{w_i} b & \text{if} \quad i=j-1,
\end{cases}
$$
 and
$$
S_{u_i}b_0 = 1 \tensor \tf_1 \tf_2 S_{u_i} b' \quad \text{if} \quad i \leq j-2,
$$
where $z_i = s_3 s_4 \cdots s_{i+1}$, $u_i = z_i w_i $ and $b' = \tf_{i+2} \cdots \tf_{j-1} b$.
\ee
\begin{proof}
{\rm (a)}
For a partition $\la$, let us denote by $B_{\gl(n)}(\la)$ the crystal graph of the highest weight $\gl(n)$-module
with highest weight $\la$.
It is enough to show that the assertion holds for $B=B_{\gl(n)}(\la)$ for any partition $\la$.

Let $b_0=1 \tensor \tf_1 \cdots \tf_{j-1} b$ for some
$\gl(n)$-highest weight
 vector $b \in B$ such that $\wt(b_0)$ is a partition.
Since any two $\gl(n)$-highest weight vectors with the same
highest weight are $\gl(n)$-crystal equivalent,
by embedding $B$ to $\B^{\tensor N}$ for some $N$,
we may assume that
$b=\tensor_{n \geq m \geq 1} (12 \cdots m)^{\tensor x_m}$, where
$x_m=\langle k_m-k_{m+1} , \wt(b)  \rangle$ for $1 \leq m \le n-1$.
Since $\wt(b)+\eps_j = \wt(b_0)$ is a partition, we have
$x_{j-1} \geq 1$. Thus we have
\begin{equation} \label{eq:gln h.w. vectors}
\begin{array}{ll}
&1 \tensor \tf_1 \tf_2 \cdots \tf_{j-1} b = \\
& \qquad1 \tensor \tensor_{m \geq j} (1 \cdots m)^{\tensor x_m}
\tensor (2 3 \cdots j) \tensor \tensor_{j-1 \geq m \geq 1}(1 \cdots
m)^{\tensor(x_m-\delta_{m,{j-1}})},
\end{array}
\end{equation}
which is a $\gl(n)$-highest weight vector in $\B \tensor B$.
Since we have
$$\B \tensor B \simeq \soplus_{ \la + \eps_j  : \, \, partiton} B_{\gl(n)}(\la+\eps_j), $$
the number of highest weight vectors in $\B \tensor B$ is the same as
the number of vectors of the form in \eqref{eq:gln h.w. vectors}.

\bigskip
\noi
{\rm(b)} 
We may assume that $b=\tensor_{n \geq m \geq 1} (12 \cdots m)^{\tensor x_m}$
as above.
Then by \eqref{eq:gln h.w. vectors}, we have 
\begin{equation}\label{eq:b0}
b_0 = 1 \tensor \tensor_{m \geq j} (1 2\cdots m)^{\tensor x_m} \tensor (2 3 \cdots j) \tensor
                           \tensor_{j-1 \geq m \geq 1}(1 2 \cdots m)^{\tensor (x_m-\delta_{m,j-1})}.
\end{equation}
We also have
\begin{equation} \label{eq:swib}
\begin{aligned}
S_{w_i}b = & \tensor_{m \geq i+1} (1 2\cdots m)^{\tensor x_m} \tensor(1 3 4 \cdots i+1)^{\tensor x_i} \tensor \\
          & \tensor_{i-1 \geq m \geq 1}(3 4 \cdots m+2)^{\tensor x_m}.
\end{aligned}
\end{equation}
Here we used the following facts :
\begin{enumerate}
\item For $w \in W$ and $\gl(n)$-highest weight vectors $b_1$ and $b_2$,
$$S_w(b_1 \tensor b_2) = S_w b_1 \tensor S_w b_2. $$
\item Suppose that $0 < a_1 < a_2 < \cdots < a_r \leq n $, $0 < x_1 < x_2 < \cdots < x_r \leq n$ and
$w(\{a_1, \ldots a_r \})=\{x_1, \ldots x_r\}$.
Then we have
$$S_w (a_1 \otimes \cdots \otimes a_r) = x_1 \otimes \cdots \otimes x_r.$$
\end{enumerate}

\noi
{\bf Case 1:} $j+1 \leq i < n$. \\
From \eqref{eq:b0}, we have
\begin{equation*}
\begin{array}{lll}
S_{w_i}b_0 =&3 \tensor \tensor_{m \geq i+1} (1 2\cdots m)^{\tensor x_m} \tensor(1 3 4 \cdots i+1)^{\tensor x_i} \tensor  \\
             & \tensor_{i-1 \geq m \geq j}(3 4 \cdots m+2)^{\tensor x_m} \tensor (4 5 \cdots j+2) \tensor \\
             & \tensor_{j-1 \geq m \geq 1} (3 \cdots m+2)^{\tensor(x_m-\delta_{m, j-1})}.
\end{array}
\end{equation*}
On the other hand, from \eqref{eq:swib}, we have
\begin{equation*}
\begin{array}{lll}
\tf_3 \cdots \tf_{j+1} S_{w_i}b =&\tensor_{m \geq i+1} (1 2\cdots m)^{\tensor x_m} \tensor(1 3 4 \cdots i+1)^{\tensor x_i} \tensor \\
                                 &\tensor_{i-1 \geq m \geq j}(3 4 \cdots m+2)^{\tensor x_m} \tensor (4 5 \cdots j+2) \tensor \\
                                 &\tensor_{j-1 \geq m \geq 1}(3 4 \cdots m+2)^{\tensor(x_m-\delta_{m,j-1})}.
\end{array}
\end{equation*}
Thus we get $$S_{w_i} b_0 = 3 \tensor \tf_3 \cdots \tf_{j+1} S_{w_i} b.$$

\noi
{\bf Case 2:} $i=j$. \\
From \eqref{eq:b0} and \eqref{eq:swib}, we have
\begin{equation*}
\begin{array}{lll}
 S_{w_i} b_0 &= S_{w_i}\big(1\tensor \tensor_{m \geq j}(1\cdots m)^{\tensor x_m} \tensor(2 \cdots j)
               \tensor\tensor_{j-1 \geq m \geq 1}(1 \cdots m)^{\tensor (x_m-\delta_{m,j-1})}\big)\\
             &= S_2 \cdots S_j \big(1\tensor \tensor_{m \geq j}(1\cdots m)^{\tensor x_m} \tensor
               \tensor_{j-1 \geq m \geq 1}(2 \cdots m+1)^{\tensor x_m}\big) \\
             &= 1\tensor \tensor_{m \geq j+1}(1\cdots m)^{\tensor x_m} \tensor(1 3 4 \cdots j+1)^{\tensor x_j} \tensor
               \tensor_{j-1 \geq m \geq 1}(3 \cdots m+2)^{\tensor x_m} \\
             &=1 \tensor S_{w_i} b.
\end{array}
\end{equation*}

\noi
{\bf Case 3:} $i=j-1$. \\
From \eqref{eq:b0}, we have
\begin{equation*}
\begin{array}{lll}
 S_{w_i} b_0 &=& S_2 \cdots S_{j-1} \big(1\tensor \tensor_{m \geq j}(1\cdots m)^{\tensor x_m} \tensor
               (2 \cdots j) \tensor (1 2 \cdots j-1)^{\tensor (x_{j-1} -1)} \tensor \\
             & & \tensor_{j-2 \geq m \geq 1}(2 \cdots m+1)^{\tensor x_m}\big) \\
             &=& 1\tensor \tensor_{m \geq j}(1\cdots m)^{\tensor x_m} \tensor (2 \cdots j) \tensor
             \tensor(1 3 4 \cdots j)^{\tensor (x_{j-1}-1)} \tensor \\
             &&\tensor_{j-2 \geq m \geq 1}(3 \cdots m+2)^{\tensor x_m}.

\end{array}
\end{equation*}
On the other hand, from \eqref{eq:swib}, we have
\begin{equation*}
\begin{array}{lll}
 &1 \tensor \tf_1 S_{w_i} b \\
 &= 1 \tensor \tf_1 \big( \tensor_{m \geq j}(1\cdots m)^{\tensor x_m} \tensor
               (1 3 4  \cdots j)^{\tensor x_{j-1}} \tensor \tensor_{j-2 \geq m \geq 1}(3 \cdots m+2)^{\tensor x_m}\big) \\
 &= 1\tensor \tensor_{m \geq j}(1\cdots m)^{\tensor x_m} \tensor (2  \cdots j)\otimes
               (1 3 4  \cdots j)^{\tensor (x_{j-1}-1)} \tensor \tensor_{j-2 \geq m \geq 1}(3 \cdots m+2)^{\tensor x_m}.
\end{array}
\end{equation*}
Hence we get
\begin{equation*}
S_{w_i} b_0 = 1 \tensor \tf_1 S_{w_i} b.
\end{equation*}

\noi
{\bf Case 4:} $i \leq j-2$. \\
Note that
\begin{equation*}
u_i(m) =
\begin{cases}
 m+3 & 1 \leq m < i, \\
 1 & m= i, \\
 2 & m=i+1 , \\
 3 & m=i+2 , \\
 m & m \geq i+3.
\end{cases}
\end{equation*}

We have
\begin{equation*}
\begin{array}{l}
S_{u_i} b_0 \\
= S_3 \cdots S_{i+1}
\big(1 \tensor \tensor_{m \geq j}(1 \cdots m)^{\tensor x_m}
     \tensor (2 \cdots j) \tensor
     \tensor_{j-1 \geq m \geq i+1} (1 \cdots m)^{\tensor (x_m -\delta_{m, j-1})} \\[1ex]
\hs{15ex} \tensor (1 3 \cdots i+1)^{\tensor x_i} \tensor
     \tensor_{i-1 \geq m \geq 1} (3 \cdots m+2)^{\tensor x_m} \big) \\[1.5ex]
=1 \tensor \tensor_{m \geq j}(1 \cdots m)^{\tensor x_m} \tensor (2 \cdots j)
      \tensor_{j-1 \geq m \geq i+2} (1 \cdots m)^{\tensor (x_m -\delta_{m, j-1})} \\[1ex]
 \hs{10ex} \tensor (1 2 4 \cdots i+2)^{\tensor x_{i+1}} \tensor (1 4 \cdots i+2)^{\tensor x_i}
    \tensor \tensor_{i-1 \geq m \geq 1} (4 \cdots m+3)^{\tensor x_m}.
\end{array}
\end{equation*}

On the other hand, we have
\begin{equation*}
\begin{array}{l}
\tf_1 \tf_2 ( S_{u_i} b') \\[1ex]
\hs{3ex}= \tf_1 \tf_2  S_{u_i} \big(\tensor_{m \geq j}(1 \cdots m)^{\tensor x_m}
   \tensor (1 \cdots i+1 \ i+3 \cdots j) \\
\hs{25ex} \tensor
    \tensor_{j-1 \geq m \geq 1} (1 \cdots m)^{\tensor (x_m -\delta_{m, j-1})} \big) \\[1ex]
\hs{3ex}=\tf_1 \tf_2 \big( \tensor_{m \geq j}(1 \cdots m)^{\tensor x_m}
         \tensor (1 2 4 \cdots j) \tensor
         \tensor_{j-1 \geq m \geq i+2} (1 \cdots m)^{\tensor (x_m -\delta_{m, j-1})} \\
\hs{15ex}  \tensor (1 2 4 \cdots i+2)^{\tensor x_{i+1}}
   \tensor (1 4 \cdots i+2)^{\tensor x_i} \tensor
   \tensor_{i-1 \geq m \geq 1} (4 \cdots m+3)^{\tensor x_m}  \big) \\[1ex]
\hs{3ex}=\tensor_{m \geq j}(1 \cdots m)^{\tensor x_m}
         \tensor (2 3 4 \cdots j) \tensor
         \tensor_{j-1 \geq m \geq i+2} (1 \cdots m)^{\tensor (x_m -\delta_{m, j-1})} \\
\hs{15ex}  \tensor (1 2 4 \cdots i+2)^{\tensor x_{i+1}}
   \tensor (1 4 \cdots i+2)^{\tensor x_i} \tensor
   \tensor_{i-1 \geq m \geq 1} (4 \cdots m+3)^{\tensor x_m}.
\end{array}
\end{equation*}

Thus, we obtain
\begin{equation*}
S_{u_i} b_0 = 1 \tensor \tf_1 \tf_2 S_{u_i} b'.
\end{equation*}
\end{proof}
\end{lemma}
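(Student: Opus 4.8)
The plan is to treat parts (a) and (b) in parallel by reducing, in both cases, to the model crystals $B_{\gl(n)}(\la)$ for partitions $\la$: every connected $\gl(n)$-crystal is isomorphic to such a $B_{\gl(n)}(\la)$, the operators $\tf_i$ and the Weyl group operators $S_w$ are transported by $\gl(n)$-crystal isomorphisms, and on $B_{\gl(n)}(\la)$ an admissible reading gives an embedding into $\B^{\tensor N}$ under which highest weight vectors admit an explicit columnar description. Since $\B\tensor B$ decomposes as $\coprod_k\B\tensor B_k$ along the decomposition of $B$ into connected components, it suffices throughout to argue for $B=B_{\gl(n)}(\la)$.

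For (a), I would first observe that if $b_1\tensor b$ with $b_1\in\B$ is a $\gl(n)$-highest weight vector of $\B\tensor B$, then the even tensor product rule (the part of Theorem~\ref{th2:tensor product} coming from \cite{Kas90,Kas91}) forces $\te_i b_1=0$ for every $i$, hence $b_1=\young(1)$, together with $\vphi_i(\young(1))\ge\eps_i(b)$ for all $i$, i.e.\ $\eps_1(b)\le 1$ and $\eps_i(b)=0$ for $i\ge2$. Conversely, realizing a $\gl(n)$-highest weight vector $b$ of $B$ as $\tensor_{n\ge m\ge1}(1\,2\cdots m)^{\tensor x_m}$ with $x_m=\langle k_m-k_{m+1},\wt(b)\rangle$, the hypothesis that $\wt(b)+\eps_j$ be a partition is exactly the inequality $x_{j-1}\ge1$, which makes $\tf_1\tf_2\cdots\tf_{j-1}b$ well-defined and explicitly computable (one column $(1\,2\cdots j-1)$ gets raised entrywise to $(2\,3\cdots j)$); from this explicit form one checks directly that $1\tensor\tf_1\cdots\tf_{j-1}b$ satisfies the two conditions above, hence is a highest weight vector. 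That these exhaust all of them follows by a count: by the Pieri rule $\B\tensor B_{\gl(n)}(\la)\simeq\bigoplus_{j:\,\la+\eps_j\ \text{partition}}B_{\gl(n)}(\la+\eps_j)$, so the number of highest weight vectors equals $\#\{j:\la+\eps_j\ \text{partition}\}$, while the vectors $1\tensor\tf_1\cdots\tf_{j-1}b_\la$ just produced are pairwise distinct because $\wt(1\tensor\tf_1\cdots\tf_{j-1}b_\la)=\la+\eps_j$.

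For (b), $b_0$ is a highest weight vector by part (a), and the computation of $S_{w_i}b_0$ rests on two elementary properties of the Weyl group action on $\gl(n)$-crystals which I would verify first: $S_w(b'\tensor b'')=S_w b'\tensor S_w b''$ whenever $b'$ and $b''$ are $\gl(n)$-highest weight vectors, and $S_w(a_1\tensor\cdots\tensor a_r)=w(a_1)\tensor\cdots\tensor w(a_r)$ whenever $a_1<\cdots<a_r$ (a box being identified with its entry). Feeding into these the explicit form of $b_0$ from part (a) and the explicit form of $S_{w_i}b$ (obtained by letting $w_i$ permute the entries of each column of $b$), I would carry out the comparison in the four ranges $j+1\le i<n$, $i=j$, $i=j-1$, and $i\le j-2$; in the last range the relevant element is $u_i=z_i w_i$ with $z_i=s_3\cdots s_{i+1}$, whose one-line notation sends $m\mapsto m+3$ for $m<i$, $i\mapsto1$, $i+1\mapsto2$, $i+2\mapsto3$ and fixes $m\ge i+3$, and tracking the columns of $S_{u_i}b_0$ against those of $1\tensor\tf_1\tf_2 S_{u_i}b'$ yields the identity.

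I expect the principal obstacle to be the bookkeeping in part (b), and in particular the range $i\le j-2$: keeping track of the many tensor factors and their heights, of the entrywise shifts produced by $w_i$ and $u_i$, and of the fact that the residual operators $\tf_3\cdots\tf_{j+1}$, $\tf_1$, or $\tf_1\tf_2$ appearing on the right-hand sides precisely reproduce the columns of $S_{w_i}b_0$ or $S_{u_i}b_0$. A secondary point requiring care is justifying the reduction to $B_{\gl(n)}(\la)\subset\B^{\tensor N}$ and the two auxiliary identities for $S_w$ used above.
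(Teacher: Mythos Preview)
Your approach is essentially identical to the paper's: both reduce to $B=B_{\gl(n)}(\la)$, realize $b$ explicitly as a tensor of columns $\tensor_{n\ge m\ge1}(1\,2\cdots m)^{\tensor x_m}$ inside $\B^{\tensor N}$, prove part (a) by an explicit description of $\tf_1\cdots\tf_{j-1}b$ together with a Pieri count of highest weight vectors, and then carry out part (b) via the same four-case analysis using the same two auxiliary facts about $S_w$. One small correction to watch for when you verify the second auxiliary fact: the formula is not $S_w(a_1\tensor\cdots\tensor a_r)=w(a_1)\tensor\cdots\tensor w(a_r)$ but rather $x_1\tensor\cdots\tensor x_r$ where $x_1<\cdots<x_r$ is the increasing rearrangement of $\{w(a_1),\ldots,w(a_r)\}$; this is exactly how the paper states it, and getting the order right matters for the column-by-column bookkeeping in the case analysis.
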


\begin{lemma}\label{lem:e1f1}
Assume that $b\in \B^{\tensor N}$ satisfies
$\tf_1b\not=0$ and $\teone \tf_1b=0$.
Then $\teone b=0$.
\end{lemma}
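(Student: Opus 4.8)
The plan is to argue entirely inside the combinatorial model of $\B^{\tensor N}$. Identify $\B$ with $\{1,2,\ldots,n\}$, on which $\teone$ sends $2\mapsto 1$ and annihilates every other letter; then an element of $\B^{\tensor N}$ is a word $b=c_1\tensor\cdots\tensor c_N$ with $c_k\in\{1,\ldots,n\}$. The first step is to write down the closed form of the odd operator on such words. Iterating the tensor product rule \eqref{eq2:tensor product}, and using that for any tensor power $\B^{\tensor m}$ the equalities $\lan k_1,\wt(b')\ran=\lan k_2,\wt(b')\ran=0$ hold precisely when no letter of $b'$ equals $1$ or $2$, one obtains: $\teone$ acts on $b$ through the letter $c_{j^\ast}$, where $j^\ast$ is the largest index with $c_{j^\ast}\in\{1,2\}$; moreover $\teone b\ne 0$ if and only if $c_{j^\ast}=2$, in which case $\teone b$ is obtained from $b$ by changing $c_{j^\ast}$ from $2$ to $1$. (If no letter of $b$ lies in $\{1,2\}$ then $\teone b=0$, but that case will not arise below since $\tf_1 b\ne 0$ forces some letter to be $1$.)

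Next I would invoke the signature rule for the even operator $\tf_1$ coming from Theorem~\ref{th2:tensor product}: scanning the letters of $b$ from left to right, writing $+$ for each letter $1$, $-$ for each letter $2$ and nothing for the other letters, and then repeatedly deleting an adjacent pair ``$+\,-$'', the operator $\tf_1$ acts on the letter $c_p$ producing the leftmost surviving $+$, turning that $c_p=1$ into a $2$. In particular $c_p=1\in\{1,2\}$, so $p\le j^\ast$; and since $b$ and $\tf_1 b$ differ only in the $p$-th letter, whose old and new values both lie in $\{1,2\}$, the index $j^\ast$ is the same for $b$ and for $\tf_1 b$.

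It then remains to split into two cases. If $p=j^\ast$, then the $j^\ast$-th letter of $b$ equals $1$, so $\teone b=0$ at once. If $p<j^\ast$, then $b$ and $\tf_1 b$ have the same $j^\ast$-th letter, so by the description of $\teone$ above $\teone b\ne 0$ if and only if $\teone(\tf_1 b)\ne 0$; the hypothesis $\teone \tf_1 b=0$ therefore forces $\teone b=0$. This proves the lemma. (One may note that the first case is in fact vacuous under the hypotheses, since $p=j^\ast$ would make the $j^\ast$-th letter of $\tf_1 b$ equal to $2$, hence $\teone(\tf_1 b)\ne 0$; but it is cleanest to dispose of it directly as above.)

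I do not expect a genuine obstacle here. The two points requiring care are the extraction of the displayed closed form of $\teone$ on $\B^{\tensor N}$ from the recursion \eqref{eq2:tensor product}, and the observation that applying $\tf_1$ does not change the position $j^\ast$ of the rightmost letter in $\{1,2\}$; both are routine bookkeeping once the even and odd tensor product rules are in hand.
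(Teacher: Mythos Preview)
Your argument is correct and follows essentially the same route as the paper's proof: both rest on the combinatorial description of $\teone$ as acting on the rightmost letter in $\{1,2\}$, together with the observation that $\tf_1$ acts at a position no further right than this letter, so it cannot disturb the criterion for $\teone$ to vanish. The paper phrases this as a short proof by contradiction (assume $\teone b\ne 0$, write $b=b_1\otimes 2\otimes b_2$ with $b_2$ free of $1$'s and $2$'s, and deduce $\teone\tf_1 b\ne 0$), whereas you spell out the direct version with the index $j^\ast$; the underlying content is the same.
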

\Proof
If $b$ does not contain  $2$, then it is trivial.
Assume that $b$ contains $2$ and $\teone b\not=0$. Then we can write
$b=b_1\otimes 2\otimes b_2$ such that $b_2$ contains neither $1$ nor $2$.
Since $\tf_1b\not=0$, we have
$\tf_1b=(\tf_1b_1)\otimes 2\otimes b_2$ and $\tf_1b_1\not=0$.
Therefore, $\teone \tf_1b=(\tf_1b_1)\otimes 1\otimes b_2$ does not vanish,
which is a contradiction.
\QED
\begin{theorem} \label{th:h.w. vectors}
Suppose that $b$ is a $\gl(n)$-highest weight vector in $\B^{\tensor (N-1)}$
and $b_0=1 \tensor \tf_1 \cdots \tf_{j-1} b$ is a highest
weight vector in $\B^{\tensor N}$. Then $b$ is
a highest weight vector in $\B^{\tensor(N-1)}$.
\end{theorem}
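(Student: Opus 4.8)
The plan is to verify the two highest weight conditions on $b$ that do not follow from $b$ being a $\gl(n)$-highest weight vector: namely $\teone b=0$ and $\teibar b=0$ for $i=2,\dots,n-1$. Writing $w_1=\mathrm{id}$ so that $\teone=\te_{\ol1}$, these are uniformly the assertions $\teone S_{w_i}b=0$ for $i=1,\dots,n-1$. On the other side, $b_0$ being a highest weight vector gives $\teone S_{w_i}b_0=0$ for all such $i$, and (since $\tei b_0=0$) that $\wt(b_0)=\wt(b)+\eps_j$ is a partition, which is exactly the hypothesis needed to invoke Lemma~\ref{le:swib0}(b). So for each $i$ I would substitute the explicit description of $S_{w_i}b_0$ from Lemma~\ref{le:swib0}(b) into $\teone S_{w_i}b_0=0$ and peel it down to $\teone S_{w_i}b=0$.

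The cases $i\ge j$ are the easy ones. If $i=j$ then $S_{w_j}b_0=1\tensor S_{w_j}b$; by the tensor product rule $\teone(1\tensor S_{w_j}b)=1\tensor\teone S_{w_j}b$ unless $\lan k_1,\wt(S_{w_j}b)\ran=\lan k_2,\wt(S_{w_j}b)\ran=0$, and in that exceptional case $\wt(\teone S_{w_j}b)\notin P^{\ge0}$ forces $\teone S_{w_j}b=0$ via the axiom $\wt(B)\subset P^{\ge0}$; either way $\teone S_{w_j}b=0$ (for $j=1$ this is already $\teone b=0$). If $i>j$ then $S_{w_i}b_0=3\tensor\tf_3\cdots\tf_{j+1}S_{w_i}b$; the same reasoning (with $\teone(3)=0$) yields $\teone(\tf_3\cdots\tf_{j+1}S_{w_i}b)=0$, and since $\teone$ commutes with $\tf_3,\dots,\tf_{j+1}$ by Lemma~\ref{com:evenodd} this reads $\tf_3\cdots\tf_{j+1}(\teone S_{w_i}b)=0$. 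I would then argue by contradiction: if $\teone S_{w_i}b\ne0$, then condition (d)(ii) for abstract $\qn$-crystals ($\teone$ preserves $\eps_m$ and $\vphi_m$ for $m\ge3$) together with $\tf_3\cdots\tf_{j+1}S_{w_i}b\ne0$ let one push $\teone$ through, showing by descending induction on $m$ that $\teone$ stays nonzero on each $\tf_m\cdots\tf_{j+1}S_{w_i}b$, whence $\tf_3\cdots\tf_{j+1}(\teone S_{w_i}b)\ne0$, a contradiction.

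For $i=j-1$ one has $S_{w_{j-1}}b_0=1\tensor\tf_1S_{w_{j-1}}b$ with $\tf_1S_{w_{j-1}}b\ne0$, so $\teone\tf_1S_{w_{j-1}}b=0$ and Lemma~\ref{lem:e1f1} immediately gives $\teone S_{w_{j-1}}b=0$ (for $j=2$ this supplies $\teone b=0$). For $i\le j-2$ I would first replace $w_i$ by $u_i=z_iw_i$ with $z_i=s_3\cdots s_{i+1}$: as $z_i$ involves only indices $\ge3$, $S_{z_i}$ commutes with $\teone$, so $\teone S_{w_i}b=0\iff\teone S_{u_i}b=0$ and likewise with $b_0$. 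Then $S_{u_i}b_0=1\tensor\tf_1\tf_2S_{u_i}b'$ with $b'=\tf_{i+2}\cdots\tf_{j-1}b$; stripping off $1\tensor(-)$ and then $\tf_1$ (Lemma~\ref{lem:e1f1}) reduces the goal to deducing $\teone S_{u_i}b=0$ from $\teone\tf_2S_{u_i}b'=0$. Since $b=\te_{j-1}\cdots\te_{i+2}b'$ and $u_i$ sends $i+2$ to $3$ and fixes $i+3,\dots,j-1$, passing from $b'$ back to $b$ under $S_{u_i}$ introduces only operators of index $\ge3$, which commute with $\teone$ and can be pushed through as above.

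The main obstacle is precisely this last step: disposing of the operator $\tf_2$. It does not commute with $\teone$, and the literal analogue of Lemma~\ref{lem:e1f1} with $\tf_2$ in place of $\tf_1$ fails (for instance $\teone\tf_2(3\tensor2)=0$ but $\teone(3\tensor2)\ne0$), so extra input is needed. I expect to settle it by a direct combinatorial analysis of the action of $\teone$ on $\B^{\tensor N}$ — by the tensor product rule it acts on the rightmost tensor factor whose entry lies in $\{1,2\}$ — using the explicit form of $\tf_2S_{u_i}b'$ that emerges from the proof of Lemma~\ref{le:swib0}(b); alternatively by an induction on $j$ reducing the case $i\le j-2$ to the already-treated cases $i\ge j-1$ with a smaller $j$. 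The boundary values $j\in\{1,2,n\}$ are covered above: for $j=1$ only the cases $i=j$ and $i>j$ occur and no peeling is needed; the condition $\teone b=0$ comes from the case $i=j$ when $j=1$, from $i=j-1$ when $j=2$, and from $i=1\le j-2$ when $j\ge3$; and $i=j=n$ is vacuous since only $\teibar$ with $i\le n-1$ is required.
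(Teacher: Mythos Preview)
Your cases $i\ge j-1$ match the paper's argument essentially verbatim, and your identification of the obstacle in the case $i\le j-2$ is accurate: passing from $\teone\tf_2 S_{u_i}b'=0$ to $\teone S_{u_i}b'=0$ cannot be done with the information you have used so far, and your own counterexample $3\tensor 2$ shows that no purely combinatorial ``$\tf_2$-analogue'' of Lemma~\ref{lem:e1f1} exists. Neither of your two proposed workarounds is likely to succeed as stated. A direct combinatorial analysis using only $\teone\tf_1\tf_2 S_{u_i}b'=0$ is exactly what the counterexample rules out; and an induction on $j$ does not obviously produce a smaller \emph{highest weight} configuration to which the earlier cases apply.

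The missing idea is that in Case $i\le j-2$ you have not yet used all of the hypothesis on $b_0$: you only fed in $\te_{\ol i}b_0=0$, but $\te_{\ol{i+1}}b_0=0$ is equally available. Since $w_2u_i=z_iw_{i+1}$ and $S_{z_i}$ commutes with $\teone$, this second vanishing gives $\teone S_{w_2}S_{u_i}b_0=0$, so $S_{u_i}b_0=1\tensor\tf_1\tf_2 S_{u_i}b'$ is not merely annihilated by $\teone$ but is a $\q(3)$-highest weight vector. The paper then isolates exactly the $\q(3)$ statement you need as a separate lemma (Lemma~\ref{le:q3hw}): if $b'$ is a $\gl(3)$-highest weight vector and $1\tensor\tf_1\tf_2 b'$ is a $\q(3)$-highest weight vector, then $\teone b'=0$. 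Its proof is a short concrete computation that crucially uses $\te_{\ol2}(1\tensor\tf_1\tf_2 b')=0$, i.e.\ precisely the extra piece of the hypothesis you dropped. Once $\teone S_{u_i}b'=0$ is in hand, you finish as you suggested: commute $S_{z_i}$ through to get $\te_{\ol i}b'=0$, and then commute $\te_{\ol i}$ past $\te_{j-1}\cdots\te_{i+2}$ (these have index $\ge i+2$, hence commute with $S_1,\ldots,S_i$ and with $\teone$) to obtain $\te_{\ol i}b=0$.
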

\begin{proof}
We shall prove $\teibar b =0 $ for $1 \leq i < n$.
%
%

\bigskip
\noindent
{\bf Case 1:} $j+1 \leq i <n$.

By Lemma~\ref{le:swib0}, we have $$S_{w_i} b_0 = 3 \tensor \tf_3 \cdots \tf_{j+1} S_{w_i} b.$$
Since $0=\te_{\ol 1} S_{w_i} b_0= \te_{\ol 1} (3 \tensor \tf_3 \cdots \tf_{j+1} S_{w_i} b)$,
we obtain $\te_{\ol 1} S_{w_i} b =0$. \\

\noindent
{\bf Case 2:} $i=j$.

We have
\begin{equation*}
 S_{w_i} b_0 =1 \tensor S_{w_i} b.
\end{equation*}
Since $0=\te_{\ol 1} S_{w_i} b_0= \te_{\ol 1} (1 \tensor S_{w_i} b)$,
we get $\te_{\ol 1} S_{w_i} b =0$.

\bigskip
\noindent
{\bf Case 3:} $i=j-1$.

Since
\begin{equation*}
S_{w_i} b_0 = 1 \tensor \tf_1 S_{w_i} b,
\end{equation*}
we have
\begin{equation*}
\te_{\ol 1} \tf_1 S_{w_i} b = 0.
\end{equation*}
Hence Lemma~\ref{lem:e1f1} implies $\te_{\ol 1} S_{w_i} b = 0$.

\bigskip
\noindent
{\bf Case 4:} $i \leq j-2$.

Set $b' \seteq \tf_{i+2} \cdots \tf_{j-1} b$.
Then $\te_k b' =0$ for $k \leq i+1$.
Hence $b'$ is a $\gl(i+2)$-highest weight vector.
Since $u_i^{-1}(\alpha_1)$ and $u_i^{-1}(\alpha_2)$ are positive roots,
$S_{u_i} b'$ is a $\gl(3)$-highest weight vector.
Here we have used the fact: 
\eq&&
\parbox{\mylength}{if $b$ is a $\gl(n)$-highest weight vector
and $w^{-1}(\alpha_i)$ is a positive root for $w\in W$
and $i$, then $\te_iS_wb=0$.}\label{eq:wh}
\eneq
For the same reason, $S_{u_i} b_0$ is a $\gl(n)$-highest weight vector.

By Lemma~\ref{le:swib0}, we have
\begin{equation*}
S_{u_i}b_0 = 1 \tensor \tf_1 \tf_2 S_{u_i} b'.
\end{equation*} 
Since $\te_{\ol 1}$ commutes with $S_3, \ldots S_{n-1}$,
$\te_{\ol1}$ commutes with $S_{z_i}$.
Hence
\begin{equation*}
\te_{\ol 1} S_{u_i} b_0 = \te_{\ol 1} S_{z_i} S_{w_i} b_0 = S_{z_i} \te_{\ol 1} S_{w_i} b_0 = 0.
\end{equation*}
Since $w_2 u_i = z_i w_{i+1}$, we also have
\begin{equation*}
\te_{\ol 1} S_{w_2} S_{u_i} b_0 = \te_{\ol 1} S_{z_i} S_{w_{i+1}}
b_0 =S_{z_i} \te_{\ol 1} S_{w_{i+1}} b_0 =0.
\end{equation*}
Thus $S_{u_i} b_0$ is a $\q(3)$-highest weight vector.
By Lemma~\ref{le:q3hw} below, we have $\te_{\ol 1} S_{u_i} b' =0.$
Since $\te_{\ol 1}$ commutes with $S_{z_i}$, we get
$\te_{\ol1} S_{z_i} S_{w_i} b' = S_{z_i} \te_{\ol 1} S_{w_i} b'$, and hence we conclude
$\te_{\ol i} b'= 0$.

On the other hand,
$\te_{\ol i}$ commutes with $\te_{j-1}\cdots\te_{i+2}$,
because $\te_k$ ($k\ge i+2$) commutes with
$S_1,\ldots, S_i$ and $\teone$.
Hence $\te_{j-1}\cdots\te_{i+2}$ commutes with $\te_{\ol i}$.
Since $b=\te_{j-1}\cdots\te_{i+2}b'$,
we obtain $\te_{\ol i}b=\te_{\ol i}\te_{j-1}\cdots\te_{i+2}b'=
\te_{j-1}\cdots\te_{i+2}\te_{\ol i}b'=0$. 
\end{proof}

\begin{lemma} \label{le:q3hw}
Suppose that $b$ is a $\gl(3)$-highest
weight vector in $\B^{\tensor (N-1)}$
and $b_0=1 \tensor \tf_1 \tf_2 b$ is a $\q(3)$-highest
weight vector in $\B^{\tensor N}$. Then $\te_{\ol 1} b =0$.
\begin{proof}
If $\te_{\ol1} b \neq 0$, then $b=b_1 \tensor 2 \tensor b_2$, where
$b_2$ contains neither $1$ nor $2$.
Since $\te_{\ol 1} b_0 =0$, we have $\te_{\ol 1} \tf_1 \tf_2 b=0$ and
hence Lemma~\ref{lem:e1f1} implies $\te_{\ol 1} \tf_2 b=0$.
It follows that $\tf_2 b = b_1\tensor 3 \tensor b_2$.
Hence $\te_{\ol 1}\tf_2 b=0$ implies $$\te_{\ol 1}b_1=0.$$
Moreover, $\tf_2(b_1 \tensor 2 \tensor b_2)=b_1\tensor 3 \tensor b_2$
implies that $\varphi_2(b_1)=0$ and $b_2$ does not contain $3$.
Since $\varepsilon_2(b_1)=0$, we conclude that
$b_1$ is $\gl(3)$-crystal equivalent to $1^{\tensor x}$ for some
positive integer $x$. Thus we get
\begin{equation}
\begin{array}{rl}
S_2 S_1 b_0 &= S_2 S_1 (1 \tensor \tf_1 \tf_2 b)
            = S_2 S_1 (1 \tensor \tf_1 b_1 \tensor 3\otimes b_2) \\
            &= S_2 (1 \tensor S_1 b_1 \tensor 3\otimes b_2)
            = 1 \tensor \te_2 S_2 S_1 b_1 \tensor 3\otimes b_2.
\end{array}\label{eq:s2s1}
\end{equation}
Here the third equality follows from
$$S_1(1\tensor \tf_1 (1^{\tensor x})) = S_1(1 \tensor 2 \tensor 1^{\tensor (x-1)})
= 1 \tensor 2\tensor 2^{\tensor (x-1)} = 1 \tensor S_1 (1^{\tensor x}),$$
and the last equality follows from
$$S_2(1 \tensor S_1(1^{\tensor x}) \tensor 3) =
S_2(1\tensor 2^{\tensor x} \tensor 3)=1\tensor 3^{\tensor (x-1)}\tensor2\tensor 3
=1 \tensor \te_2S_2 S_1 (1^{\tensor x}) \tensor 3.$$
Since $\te_{\ol 2} b_0 = 0$ by the assumption, $\teone S_2 S_1 b_0=0$,
and \eqref{eq:s2s1} implies
$$\te_{\ol 1} \te_{2} S_2 S_1 b_1 = 0.$$
On the other hand,  $\tf_1(b_1\otimes 3\otimes b_2)=\tf_1\tf_2b\not=0$ implies
$\tf_1b_1\not=0$.
Hence $b_1$ contains $1$, and
$\te_{\ol 1} b_1 =0 $ implies that
$b_1 = b_3 \tensor 1 \tensor b_4$ where
$b_4$ contains neither $1$ nor $2$.
Since $b_3$ is a $\gl(3)$-highest weight vector,
we have $S_1 b_1  = S_1 b_3 \tensor 2 \tensor b_4$.
Since $\te_2 S_1 b_1 =0$ by \eqref{eq:wh}, we have $\te_2 S_1 b_3 =0$.
Then we have
$\te_2 S_2 S_1 b_1 =b_5 \tensor 2 \tensor b_4$,
for some $b_5$
because
$\te_2S_2(2^{\otimes y} \otimes 2 \otimes 3^{\otimes z})
=\te_2 \bigl(3^{\otimes (y+1-z)} \otimes 2^{\otimes z} \otimes 3^{\otimes z} \bigr)
=3^{\otimes(y-z)} \otimes 2^{\otimes z}  \otimes 2 \otimes 3^{\otimes z}$.
This contradicts $\te_{\ol 1} \te_2 S_2 S_1 b_1 = 0$.
Hence we get the desired result $\teone b=0$.
\end{proof}
\end{lemma}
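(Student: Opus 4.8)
The plan is to argue by contradiction: assume $\teone b\neq 0$ and produce a contradiction with $b_0$ being a $\q(3)$-highest weight vector. First I would record what $\teone b\neq 0$ means: by the tensor product rule \eqref{eq2:tensor product} and the explicit action of $\teone$ on the vector crystal $\B$, there is a decomposition $b=b_1\tensor 2\tensor b_2$ in $\B^{\tensor(N-1)}$ with $b_2$ containing neither $1$ nor $2$, the displayed $2$ being the box on which $\teone$ acts, so $\teone b=b_1\tensor 1\tensor b_2$. Since $b_0=1\tensor\tf_1\tf_2 b$ is, by hypothesis, an element of $\B^{\tensor N}$, it is non-zero; hence $\tf_2 b\neq 0$ and $\tf_1\tf_2 b\neq 0$.

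Next I would extract structure from $\teone b_0=0$. As $\tf_1$ creates a $2$, the word $\tf_1\tf_2 b$ contains a $2$, so \eqref{eq2:tensor product} gives $\teone(1\tensor\tf_1\tf_2 b)=1\tensor\teone\tf_1\tf_2 b$, whence $\teone\tf_1\tf_2 b=0$; Lemma~\ref{lem:e1f1}, applied with $c=\tf_2 b$, then gives $\teone\tf_2 b=0$. Reading off from the signature rule where $\tf_2$ must act on $b=b_1\tensor 2\tensor b_2$ — using that $b_2$ has no $2$ and that $\te_1 b=\te_2 b=0$ — I would deduce $\tf_2 b=b_1\tensor 3\tensor b_2$, that $b_2$ contains no $3$, that $\varphi_2(b_1)=\varepsilon_2(b_1)=0$, and (since $3$ and $b_2$ carry no $1$ or $2$) that $\teone b_1=0$. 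From $\tf_1\tf_2 b=\tf_1 b_1\tensor 3\tensor b_2\neq 0$ I also obtain $\tf_1 b_1\neq 0$ and that $b_1$ is a $\gl(3)$-highest weight vector, with $x\seteq\varphi_1(b_1)\geq 1$. The key reduction is then to combine $\te_1 b_1=\te_2 b_1=\teone b_1=0$, $\varphi_2(b_1)=0$, $\varphi_1(b_1)=x$ into the statement that $b_1$ is $\gl(3)$-crystal equivalent to $1^{\tensor x}$.

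The contradiction comes from a Weyl-group computation. Since $b_0$ is $\q(3)$-highest weight and $w_2=s_2s_1$, we get $\teone S_2 S_1 b_0=S_2 S_1\,\te_{\ol 2}b_0=0$. Using $b_1\sim 1^{\tensor x}$, that $b_2$ contains no $1,2,3$, the identity $S_w(b'\tensor b'')=S_w b'\tensor S_w b''$ on highest weight vectors, and the fact \eqref{eq:wh}, one computes
$$S_2 S_1 b_0=1\tensor\bigl(\te_2 S_2 S_1 b_1\bigr)\tensor 3\tensor b_2 .$$
Since $\tf_1 b_1\neq 0$ and $\teone b_1=0$, write $b_1=b_3\tensor 1\tensor b_4$ with $b_4$ containing no $1$ or $2$; using $\te_2 S_1 b_1=0$ (from \eqref{eq:wh}) and the action of $S_2$ on strings of $2$'s and $3$'s, one checks $\te_2 S_2 S_1 b_1=b_5\tensor 2\tensor b_4$ for some $b_5$. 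As $3$ and $b_2$ contain no $1$ or $2$, this yields $\teone S_2 S_1 b_0=1\tensor(b_5\tensor 1\tensor b_4)\tensor 3\tensor b_2\neq 0$, contradicting $\teone S_2 S_1 b_0=0$. Hence $\teone b=0$, as desired.

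I expect the main obstacle to be, first, establishing that $b_1$ is $\gl(3)$-crystal equivalent to $1^{\tensor x}$: this is the one place where all the hypotheses on $b$ and on $b_0$ combine, and it rests on a careful analysis of $\te_1,\te_2,\teone$ together with the signature rule in $\B^{\tensor k}$. Second, the explicit evaluations of $S_2 S_1 b_0$ and of $\te_2 S_2 S_1 b_1$, which require careful bookkeeping with the tensor product rule and with the operators $S_i$ on the relevant substrings; Lemma~\ref{le:swib0} can be used to organize the computation of $S_2 S_1 b_0$.
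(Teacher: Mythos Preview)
Your proposal is correct and follows essentially the same route as the paper: assume $\teone b\neq0$, decompose $b=b_1\tensor 2\tensor b_2$, use $\teone b_0=0$ together with Lemma~\ref{lem:e1f1} to force $\teone\tf_2 b=0$ and hence $b_1$ $\gl(3)$-crystal equivalent to $1^{\tensor x}$, then compute $S_2S_1b_0=1\tensor(\te_2S_2S_1b_1)\tensor 3\tensor b_2$ and use $\te_{\ol2}b_0=0$ to derive $\teone\te_2S_2S_1b_1=0$, contradicted by the further decomposition $b_1=b_3\tensor1\tensor b_4$. The only cosmetic difference is that the paper carries out the $S_2S_1$ computation directly via the crystal equivalence $b_1\sim1^{\tensor x}$ rather than invoking Lemma~\ref{le:swib0}.
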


\begin{lemma} \label{le:e1barbneq0}
If $\varepsilon_1 b = 0$ and $\langle k_1, \wt(b) \rangle = \langle k_2, \wt(b) \rangle > 0$,
then $\te_{\ol 1} b \neq 0$.
\begin{proof}
Assume that $\te_{\ol 1} b = 0$.
Then $b=b_1 \tensor 1 \tensor b_2$ for some $b_1$ and $b_2$,
where $b_2$ contains neither $1$ nor $2$.
Since $\varepsilon_1(b_1)=0$, we have
$$\langle k_1 , \wt(b) \rangle = 1+ \langle k_1 , \wt(b_1) \rangle \geq 1+\langle k_2 , \wt(b_1) \rangle = \langle k_2 , \wt(b) \rangle +1,$$
which is a contradiction.
\end{proof}
\end{lemma}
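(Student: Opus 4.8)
The plan is to argue by contradiction in $\B^{\tensor N}$: assume $\teone b = 0$ and contradict the hypothesis $\langle k_1,\wt b\rangle = \langle k_2,\wt b\rangle > 0$. The first step is to locate the tensor factor on which $\teone$ acts. On the single-box crystal $\B$ the operator $\teone$ sends the box $2$ to the box $1$ and annihilates every other box. Unwinding the tensor product rule \eqref{eq2:tensor product} — at a factorization $b' \tensor b''$ the operator $\teone$ passes to $b''$ precisely when $b''$ still contains a box with entry $1$ or $2$ — one sees that $\teone$ acts on the rightmost box of $b$ whose entry lies in $\{1,2\}$. Consequently $\teone b = 0$ exactly when either $b$ contains no box with entry $1$ or $2$, or else the rightmost such box has entry $1$.

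The first alternative is excluded, since $\langle k_1,\wt b\rangle > 0$ forces $b$ to contain a box with entry $1$. Hence we may write $b = b_1 \tensor 1 \tensor b_2$, where the displayed $1$ is the rightmost box of $b$ with entry in $\{1,2\}$; thus $b_2$ contains no box with entry $1$ or $2$. Such a $b_2$ is transparent to the even operator $\te_1$ (by the tensor rule, $\varepsilon_1(b_2)=0$ forces $\te_1$ to act strictly to its left), and $\te_1(b_1 \tensor 1) = (\te_1 b_1) \tensor 1$ since $\varepsilon_1$ of the box $1$ equals $0 \le \varphi_1(b_1)$. Therefore $\varepsilon_1 b = 0$ yields $\varepsilon_1 b_1 = 0$, so that $\langle h_1,\wt b_1\rangle = \varphi_1(b_1) - \varepsilon_1(b_1) = \varphi_1(b_1) \ge 0$, i.e.\ $\langle k_1,\wt b_1\rangle \ge \langle k_2,\wt b_1\rangle$.

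It then remains to count entries: $b_2$ contributes nothing to $\langle k_1,-\rangle$ or to $\langle k_2,-\rangle$, while the displayed box contributes $1$ to $\langle k_1,-\rangle$ and $0$ to $\langle k_2,-\rangle$, so
$$\langle k_1,\wt b\rangle = 1 + \langle k_1,\wt b_1\rangle \ge 1 + \langle k_2,\wt b_1\rangle = 1 + \langle k_2,\wt b\rangle > \langle k_2,\wt b\rangle,$$
contradicting $\langle k_1,\wt b\rangle = \langle k_2,\wt b\rangle$. I do not expect a genuine obstacle here; the only points that need a little care are the first step — reading off from \eqref{eq2:tensor product} exactly which box $\teone$ (and likewise $\te_1$) acts on — and the bookkeeping of which boxes contribute to $\langle k_1,-\rangle$ versus $\langle k_2,-\rangle$.
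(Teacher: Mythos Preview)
Your proof is correct and follows essentially the same route as the paper: assume $\teone b=0$, write $b=b_1\otimes 1\otimes b_2$ with $b_2$ free of $1$'s and $2$'s, deduce $\varepsilon_1(b_1)=0$, and then count entries to obtain $\langle k_1,\wt b\rangle\ge \langle k_2,\wt b\rangle+1$. The only difference is that you spell out why the decomposition exists and why $\varepsilon_1(b_1)=0$, whereas the paper leaves these as implicit consequences of the tensor product rule.
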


\begin{prop} \label{prop:strict partition}
If $b$ is a highest weight vector in $\B^{\tensor N}$, then $\wt(b)$ is a strict partition.
\end{prop}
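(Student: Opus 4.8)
The plan is to argue by contradiction and reduce everything to the node~$1$ using the Weyl-group twist $S_{w_i}$, after which the statement falls out of Lemma~\ref{le:e1barbneq0} together with property~\eqref{eq:wh}. First I would dispose of the easy half: since $b$ is a highest weight vector we have $\te_ib=0$ for $1\le i\le n-1$, so $b$ is a $\gl(n)$-highest weight vector; as $\wt(\B)=\{\epsilon_1,\dots,\epsilon_n\}\subset P^{\ge0}$ we get $\wt(\B^{\tensor N})\subset P^{\ge0}$, and the crystal axioms give $\langle h_i,\wt b\rangle=\varphi_i(b)\ge0$. Hence $\lambda\seteq\wt(b)=\sum_j\lambda_j\epsilon_j$ satisfies $\lambda_1\ge\cdots\ge\lambda_n\ge0$, i.e.\ $\lambda$ is a partition. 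By the definition of $\Lambda^+$, $\lambda$ fails to be a strict partition exactly when there is an index $i\in\{1,\dots,n-1\}$ with $\lambda_i=\lambda_{i+1}>0$, that is $\langle k_i,\lambda\rangle=\langle k_{i+1},\lambda\rangle>0$; I assume such an $i$ exists and set $b'\seteq S_{w_i}b\in\B^{\tensor N}$.

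Next I would transport the data through $S_{w_i}$. Recall from~\eqref{def:wi} that $w_i$ is the shortest $w\in W$ with $w(\alpha_i)=\alpha_1$; since $W=S_n$ acts on $P$ by permuting the $\epsilon_j$, the relation $w_i(\epsilon_i)-w_i(\epsilon_{i+1})=\epsilon_1-\epsilon_2$ forces $w_i(\epsilon_i)=\epsilon_1$ and $w_i(\epsilon_{i+1})=\epsilon_2$. Consequently $w_i^{-1}(\alpha_1)=\epsilon_i-\epsilon_{i+1}=\alpha_i$ is a positive root, and $\langle k_1,\wt(b')\rangle=\langle k_1,w_i\lambda\rangle=\langle k_i,\lambda\rangle=\lambda_i$, $\langle k_2,\wt(b')\rangle=\langle k_{i+1},\lambda\rangle=\lambda_{i+1}$, so $\langle k_1,\wt(b')\rangle=\langle k_2,\wt(b')\rangle>0$. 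Because $b$ is a $\gl(n)$-highest weight vector and $w_i^{-1}(\alpha_1)$ is positive, property~\eqref{eq:wh} gives $\te_1b'=0$, i.e.\ $\varepsilon_1(b')=0$. Thus $b'$ satisfies the hypotheses of Lemma~\ref{le:e1barbneq0}, which yields $\te_{\ol1}b'\ne0$.

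Finally I would conclude: by the definition of $\te_{\ol i}$ we have $\te_{\ol i}b=S_{w_i^{-1}}\bigl(\te_{\ol1}(S_{w_i}b)\bigr)=S_{w_i^{-1}}(\te_{\ol1}b')$, and since $S_{w_i^{-1}}=(S_{w_i})^{-1}$ is a bijection of $B$ onto itself (in particular it never sends an element of $B$ to $0$), we obtain $\te_{\ol i}b\ne0$, contradicting that $b$ is a highest weight vector. Hence no repeated nonzero part occurs and $\wt(b)$ is a strict partition. I do not expect a real obstacle here: the only things to check are the two elementary identities $w_i(\epsilon_i)=\epsilon_1$, $w_i(\epsilon_{i+1})=\epsilon_2$ (immediate from $w_i(\alpha_i)=\alpha_1$ and $W=S_n$) and that Lemma~\ref{le:e1barbneq0} applies to $b'$; all the substantive content is already packaged in Lemma~\ref{le:e1barbneq0} and property~\eqref{eq:wh}.
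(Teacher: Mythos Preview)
Your proof is correct and follows essentially the same approach as the paper: assume a repeated nonzero part at position $i$, set $b'=S_{w_i}b$, use \eqref{eq:wh} with $w_i^{-1}(\alpha_1)=\alpha_i$ to get $\te_1b'=0$, and then invoke Lemma~\ref{le:e1barbneq0} to obtain $\teone b'\neq0$, contradicting $\te_{\ol i}b=0$. The paper's version is terser, omitting the preliminary verification that $\wt(b)$ is a partition and the explicit weight computation for $b'$, but the argument is the same.
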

\begin{proof} 
Assuming that $\langle k_i, \wt(b) \rangle = \langle k_{i+1}, \wt(b)
\rangle > 0$, we shall derive a contradiction.
Set $b' \seteq S_{w_i} b$. Since $w_i^{-1}(\alpha_1)=\alpha_i$,
\eqref{eq:wh} implies $\te_1 b'=0$. Hence
 Lemma~\ref{le:e1barbneq0} implies $\teone b'\not=0$, which is a contradiction.
\end{proof}

\begin{lemma} \label{le:A}
Let $b$  be a vector in $\B^{\tensor N}$.

{\rm (a)} If $\te_{\ol 1} b= \te_1 b=0$ and $\langle k_1, \wt(b)
\rangle \geq \langle k_2, \wt(b) \rangle +2$, then $\te_{\ol 1} (1
\tensor \tf_1 b) = 0$.

{\rm (b)} 
If $\te_{\ol 1} b= \te_1 b = \te_2 b =0$ and $\langle k_2,
\wt(b) \rangle > \langle k_3, \wt(b) \rangle$, then $\te_{\ol
1} (1 \tensor \tf_1 \tf_2 b) = 0$.
\end{lemma}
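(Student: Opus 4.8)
The plan is to analyze the tensor product rule for $\teone$ on $\B^{\otimes N}$ directly, tracking where the distinguished $1$ on the left factor can interact with the rest. For part (a), write $b = b_1 \otimes b_2 \otimes \cdots \otimes b_N$ with each $b_k \in \B$. The hypothesis $\te_1 b = 0$ means $b$ is a $\gl(2)$-highest weight vector, and $\te_{\ol 1}b = 0$ together with the description of $\teone$ on $\B^{\otimes N}$ (as used repeatedly in Lemma~\ref{lem:e1f1} and Lemma~\ref{le:q3hw}) tells us that if $b$ contains a box with entry $1$, then writing $b = b'\otimes 1 \otimes b''$ for the leftmost such box, $b''$ contains no $1$ and no $2$. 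The inequality $\langle k_1,\wt(b)\rangle \ge \langle k_2,\wt(b)\rangle + 2$ says the number of $1$'s in $b$ exceeds the number of $2$'s by at least $2$. First I would compute $\tf_1 b$: since $b$ is $\gl(2)$-highest weight and has at least one $1$, $\tf_1 b$ replaces the rightmost $1$ by a $2$ (by the even tensor product rule, as in \eqref{eq:gln h.w. vectors}). Then I would apply $\teone$ to $1 \otimes \tf_1 b$ using \eqref{eq2:tensor product}: because $\tf_1 b$ still contains at least one $1$ (we had a surplus of at least two), we have $\langle k_1, \wt(\tf_1 b)\rangle > 0$, so $\teone(1\otimes \tf_1 b) = 1 \otimes \teone(\tf_1 b)$. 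It then suffices to show $\teone(\tf_1 b) = 0$, i.e.\ that after replacing the rightmost $1$ by a $2$ one still cannot apply $\teone$; but $\teone$ acting on $\B^{\otimes(N)}$ looks for the rightmost $2$ that can be promoted to $1$, and the new $2$ (at the old rightmost-$1$ position) has, to its right, only entries that are $\ge 3$ except possibly old $2$'s — and a careful bracketing argument (the same signature/cancellation bookkeeping as in Lemma~\ref{lem:e1f1}) shows the relevant $2$ for $\teone$ is still "blocked". The surplus $\ge 2$ is exactly what guarantees a $1$ survives to block it.

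For part (b), the hypotheses make $b$ a $\gl(3)$-highest weight vector with $\te_{\ol 1}b = 0$, and $\langle k_2,\wt(b)\rangle > \langle k_3,\wt(b)\rangle$ (strictly more $2$'s than $3$'s). First I would compute $\tf_1\tf_2 b$. Since $b$ is $\gl(3)$-highest weight, $\tf_2 b$ replaces the rightmost $2$ by a $3$ and $\tf_1\tf_2 b$ then replaces the rightmost $1$ by a $2$; by the structure forced by $\te_{\ol1}b=0$ (leftmost $1$ is $b'\otimes 1\otimes b''$ with $b''$ free of $1,2$) one checks $\tf_1\tf_2 b\ne 0$ and identifies it explicitly. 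Then I would apply $\teone(1\otimes \tf_1\tf_2 b)$ via \eqref{eq2:tensor product}; one determines whether $\langle k_1,\wt(\tf_1\tf_2 b)\rangle$ is zero or positive and whether $\langle k_2,\cdot\rangle$ is zero, and in each subcase reduces to showing $\teone(\tf_1\tf_2 b) = 0$ or that the $b_1\otimes \te_1(\cdots)$-type term vanishes. Here the strict inequality on $2$'s vs $3$'s plays the role that the surplus-$\ge 2$ played in (a): it ensures that after $\tf_2$ there is still a $2$ around, so the promotion of $1\to2$ by $\tf_1$ does not create an un-blocked $2$ for $\teone$. Lemma~\ref{lem:e1f1} will again be the workhorse: I expect to reduce "$\teone(\tf_1(\cdots))=0$" to "$\teone(\cdots)=0$" via that lemma, and the latter is either a hypothesis or follows from $\gl(3)$-highest weight together with the position constraints.

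The main obstacle I anticipate is the bracketing/cancellation bookkeeping in the $\teone$-tensor-product rule on a long tensor word: unlike the even operators, $\teone$ on $\B^{\otimes N}$ is not literally a signature rule on a single pair of symbols, so one has to iterate \eqref{eq2:tensor product} carefully and keep track of the interplay between the $1$'s and $2$'s as $\tf_1$ (and $\tf_2$) move the rightmost occurrences. Organizing this cleanly — probably by repeatedly splitting off the leftmost relevant box as in Lemmas~\ref{lem:e1f1} and \ref{le:q3hw}, and invoking Lemma~\ref{lem:e1f1} to peel $\tf_1$'s off the inside — is where the real work is; the weight inequalities are used precisely to guarantee the "blocking" entry survives each peeling step. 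Once the combinatorial picture is set up, both (a) and (b) should fall out as short case checks on $\langle k_1,\wt\rangle$ and $\langle k_2,\wt\rangle$ after the $\tf$'s are applied.
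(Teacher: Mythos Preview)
Your outline has the right opening move---write $b = b_1 \otimes 1 \otimes b_2$ with $b_2$ containing neither $1$ nor $2$ (this is the \emph{rightmost} entry in $\{1,2\}$, not the leftmost)---but then goes wrong in two linked places.

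First, the claim that ``$\tf_1 b$ replaces the rightmost $1$ by a $2$'' is false under the paper's tensor product convention \eqref{eq1:tensor product}. For instance $\tf_1(1\otimes 1) = 2\otimes 1$: the even operator $\tf_1$ acts on the \emph{leftmost} unpaired $1$ in the signature rule, not the rightmost one. Second, $\teone$ on $\B^{\otimes N}$ has no bracketing or cancellation at all: iterating \eqref{eq2:tensor product} shows that $\teone$ simply locates the rightmost entry lying in $\{1,2\}$ and acts there, so $\teone c = 0$ if and only if that entry is a $1$ (or no such entry exists). There is no notion of a $2$ being ``blocked''. Consequently, if your picture were correct and $\tf_1 b = b_1 \otimes 2 \otimes b_2$, then $\teone(\tf_1 b) = b_1 \otimes 1 \otimes b_2 = b \neq 0$, and the argument collapses.

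The paper's proof avoids computing which $1$ gets hit by $\tf_1$. From $\te_1 b = 0$ one has $\varepsilon_1(b_1)=0$, so $\varphi_1(b_1) = \langle k_1 - k_2, \wt(b_1)\rangle = \langle k_1 - k_2, \wt(b)\rangle - 1 \ge 1$ by the hypothesis. Since $\varphi_1(b_1) > 0 = \varepsilon_1(1\otimes b_2)$, the even tensor rule gives $\tf_1 b = (\tf_1 b_1)\otimes 1 \otimes b_2$: the distinguished $1$ stays put. Then $1\otimes \tf_1 b$ still has a $1$ as its rightmost $\{1,2\}$-entry, so $\teone(1\otimes \tf_1 b)=0$ immediately. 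Part (b) is the same idea one level deeper: the inequality $\langle k_2,\wt(b)\rangle > \langle k_3,\wt(b)\rangle$ forces $\varphi_2(b_1) > \varepsilon_2(b_2)$, so $\tf_2 b = (\tf_2 b_1)\otimes 1 \otimes b_2$; then $\varphi_1(\tf_2 b_1) = \varphi_1(b_1)+1 > 0$ forces $\tf_1\tf_2 b = (\tf_1\tf_2 b_1)\otimes 1 \otimes b_2$, and again the distinguished $1$ survives. No case analysis on $\langle k_1,\wt\rangle$ after applying the $\tf$'s, and no appeal to Lemma~\ref{lem:e1f1}, is needed.
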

\begin{proof}

(a)   Since $\langle k_1, \wt(b) \rangle >0$ and $\te_{\ol 1} b=0$,
we can write $b=b_1 \tensor 1 \tensor b_2$ for some $b_1$ and $b_2$
such that $b_2$ contains neither $1$ nor $2$. Then we get
$$2 \leq \langle k_1, \wt(b) \rangle - \langle k_2, \wt(b) \rangle
= \langle k_1, \wt(b_1) \rangle - \langle k_2, \wt(b_1) \rangle + 1
=\varphi_1(b_1) - \varepsilon_1(b_1)+1.$$
Thus $\varphi_1(b_1) > 0 = \varepsilon_1(1)$ and hence $\tf_1 b= \tf_1 b_1 \tensor 1 \tensor b_2$.
It follows that $\teone(1 \tensor \tf_1 b)=0$.



\bigskip
\noi
(b) Since $\langle k_1, \wt(b) \rangle \geq \langle k_2, \wt(b)
\rangle
>0$ and $\te_{\ol 1} b=0$, we can write $b=b_1 \tensor 1 \tensor
b_2$ for some $b_1$ and $b_2$ such that $b_2$ contains neither $1$
nor $2$. It follows that $\varepsilon_2(b_1) = \varphi_2(b_2)=0$.
Observe that 
$$\varphi_2(b_1) = \langle k_2, \wt(b_1) \rangle - \langle k_3, \wt(b_1) \rangle
> \langle k_3, \wt(b_2) \rangle - \langle k_2, \wt(b_2) \rangle
                 = \varepsilon_2(b_2).$$
Hence we have $\tf_2 b = \tf_2 b_1 \tensor 1 \tensor b_2$.
Since $\varepsilon_1(\tf_2 b_1)=0$,
we deduce that
$$\varphi_1(\tf_2 b_1)= \langle k_1-k_2, \wt(\tf_2 b_1) \rangle
=\langle k_1-k_2, \wt(b_1) \rangle  + 1
=\varphi_1(b_1) +1 > 0= \varepsilon_1(1 \tensor b_2),$$
and hence
$$\tf_1 \tf_2 b = \tf_1 \tf_2 b_1 \tensor 1 \tensor b_2.$$
Therefore we have $\te_{\ol 1} (1 \tensor \tf_1\tf_2 b)=0$.
\end{proof}

\begin{prop} \label{pro:h.w. vectors 2}
If $b \in \B^{\tensor (N-1)}$ is a highest weight vector with
$\langle k_{j-1}, \wt(b) \rangle \geq \langle k_j, \wt(b) \rangle +
2$, then $b_0 = 1 \tensor \tf_1 \cdots \tf_{j-1} b$ is a highest
weight vector in $\B^{\tensor N}$.

\begin{proof}
We will show $\te_{\ol i} b_0 = 0$ for $i = 1,2,\ldots, n-1$.

\vskip 3mm
\noi
{\bf Case 1:} $i \geq j+1$.

By Lemma~\ref{le:swib0}, we have
$$S_{w_i} b_0 = 3 \tensor \tf_3 \cdots \tf_{j+1} S_{w_i} b.$$
Thus we obtain
$$\te_{\ol 1} S_{w_i} b_0 = 3 \tensor \tf_3 \cdots \tf_{j+1} \te_{\ol 1} S_{w_i} b = 0.$$

\medskip
\noi
{\bf Case 2:} $i=j$.

Since $S_{w_i}b_0 = 1 \tensor S_{w_i} b$, we have
$\te_{\ol 1} S_{w_i} b_0=0$.\\

\medskip
\noi
{\bf Case 3:} $i=j-1$.

We have
$$S_{w_i} b_0 = 1 \tensor \tf_1 S_{w_i} b$$
and
$$\langle k_1, \wt(S_{w_i}b) \rangle = \langle k_{j-1}, \wt(b) \rangle
\geq \langle k_j, \wt(b) \rangle +2= \langle k_2, \wt(S_{w_i}b)
\rangle +2.$$
By Lemma~\ref{le:A}(a), we obtain $\te_{\ol 1} S_{w_i} b_0 = 0$.  \\

\medskip
\noi
{\bf Case 4:} $i \leq j-2$.

Set $b' \seteq \tf_{i+2} \cdots \tf_{j-1} b$.
Here we understand $b'=b$ if $i=j-2$.
Then $b'$ is a $\gl(i+2)$-highest
weight vector and $\te_{\ol 1} b'=0$.
Because $\te_{\ol 1}$ commutes with $S_{u_i}$, we have $\te_{\ol 1} S_{u_i}b'=0$.
Since $u_i^{-1}(\alpha_1)$ and $u_i^{-1}(\alpha_2)$ are positive roots,
$S_{u_i} b'$ is a $\gl(3)$-highest weight vector by \eqref{eq:wh}.

By Lemma~\ref{le:swib0}, we have
$$S_{u_i} b_0 = 1 \tensor \tf_1 \tf_2 S_{u_i} b'.$$
Observe that
$$
\begin{aligned}
\langle k_2, \wt(S_{u_i}b') \rangle - \langle k_3, \wt(S_{u_i}b') \rangle
 &= \langle k_{i+1}, \wt(b') \rangle - \langle k_{i+2}, \wt(b') \rangle \\
 &= \langle k_{i+1}-k_{i+2} , \wt(b)-\eps_{i+2} + \eps_j \rangle \\
 &= \langle k_{i+1}-k_{i+2}, \wt(b) \rangle + 1 - \delta_{j, i+2} \\
 & \geq 1.
\end{aligned}
 $$
 By Lemma~\ref{le:A}(b), we get $\te_{\ol 1} S_{u_i} b_0 = 0$. Since
$S_{u_i}=S_{z_i} S_{w_i}$ and $\te_{\ol 1}$ commutes with $S_{z_i}$,
we obtain
$\te_{\ol 1} S_{w_i} b_0 =0$.
\end{proof}
\end{prop}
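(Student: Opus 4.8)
The plan is to verify directly that $b_0$ satisfies both defining conditions of a highest weight vector: $\te_i b_0=0$ and $\te_{\ol i}b_0=0$ for $1\le i\le n-1$.

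First, the $\gl(n)$-part. Since $b$ is a highest weight vector, $\wt(b)$ is a strict partition by Proposition~\ref{prop:strict partition}; in particular $\langle k_{j-1},\wt(b)\rangle\ge\langle k_j,\wt(b)\rangle+1$, so $\wt(b)+\eps_j$ is a partition and Lemma~\ref{le:swib0}(b) applies. It gives at once that $b_0=1\otimes\tf_1\cdots\tf_{j-1}b$ is a $\gl(n)$-highest weight vector, so $\te_i b_0=0$ for all $i$.

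Next, the odd part. Since $\te_{\ol i}=S_{w_i}^{-1}\,\te_{\ol 1}\,S_{w_i}$ with $w_i=s_2\cdots s_is_1\cdots s_{i-1}$ and $S_{w_i}$ is bijective, it is equivalent to prove $\te_{\ol 1}S_{w_i}b_0=0$ for every $i$, and I would split into the four cases of Lemma~\ref{le:swib0}(b), which describes $S_{w_i}b_0$ (and $S_{u_i}b_0$ for $u_i=z_iw_i$, $z_i=s_3\cdots s_{i+1}$) in terms of $S_{w_i}b$ and of $b'\seteq\tf_{i+2}\cdots\tf_{j-1}b$. When $i\ge j+1$ one has $S_{w_i}b_0=3\otimes\tf_3\cdots\tf_{j+1}S_{w_i}b$; since $\te_{\ol 1}$ commutes with $\te_k,\tf_k$ for $3\le k\le n-1$ (Lemma~\ref{com:evenodd}) and, by the tensor product rule (Theorem~\ref{th2:tensor product}), acts on this product through its second factor, applying $\te_{\ol 1}$ reduces to $\te_{\ol 1}S_{w_i}b=S_{w_i}\te_{\ol i}b=0$; the case $i=j$ is the same since $S_{w_i}b_0=1\otimes S_{w_i}b$ and $\te_{\ol 1}S_{w_i}b=0$. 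When $i=j-1$ one has $S_{w_i}b_0=1\otimes\tf_1 S_{w_i}b$ and $\te_{\ol 1}$ cannot be pushed past $\tf_1$; here I would invoke Lemma~\ref{le:A}(a), whose hypotheses hold because $\te_{\ol 1}S_{w_i}b=S_{w_i}\te_{\ol i}b=0$, $\te_1S_{w_i}b=0$ by \eqref{eq:wh} (as $w_i^{-1}(\alpha_1)=\alpha_i$ is a positive root), and $\langle k_1,\wt(S_{w_i}b)\rangle-\langle k_2,\wt(S_{w_i}b)\rangle=\langle h_1,w_i\wt(b)\rangle=\langle k_{j-1}-k_j,\wt(b)\rangle\ge2$ by the hypothesis of the proposition. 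When $i\le j-2$ one checks that $b'$ is a $\gl(i+2)$-highest weight vector with $\te_{\ol 1}b'=0$ (the latter again by Lemma~\ref{com:evenodd}, since the indices $i+2,\dots,j-1$ are $\ge3$), so $S_{u_i}b'$ is a $\gl(3)$-highest weight vector by \eqref{eq:wh} and $S_{u_i}b_0=1\otimes\tf_1\tf_2 S_{u_i}b'$; a short weight computation reduces $\langle k_2,\wt(S_{u_i}b')\rangle>\langle k_3,\wt(S_{u_i}b')\rangle$ to $\langle k_{i+1}-k_{i+2},\wt(b)\rangle+1-\delta_{j,i+2}\ge1$, which holds because $\wt(b)$ is a strict partition, and Lemma~\ref{le:A}(b) then yields $\te_{\ol 1}S_{u_i}b_0=0$; since $z_i$ uses only indices $\ge3$, $\te_{\ol 1}$ commutes with $S_{z_i}$, so $\te_{\ol 1}S_{w_i}b_0=0$ as well.

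The main obstacle is the pair of cases $i=j-1$ and $i\le j-2$, where the relevant vector has the shape $1\otimes\tf_1(\cdots)$ and $\te_{\ol 1}$ does not commute through $\tf_1$. The crucial point is that the arithmetic hypothesis $\langle k_{j-1},\wt(b)\rangle\ge\langle k_j,\wt(b)\rangle+2$ is transported by $S_{w_i}$ into precisely the ``gap $\ge2$'' hypothesis demanded by Lemma~\ref{le:A}(a) (for $i\le j-2$ mere strictness of $\wt(b)$ suffices), and Lemma~\ref{le:A} was set up exactly for this. The rest is bookkeeping: tracking which Weyl-group conjugate of $\te_{\ol 1}$ commutes with which $S_{s_k}$ and computing a handful of weights. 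Once Lemmas~\ref{le:swib0}, \ref{le:A}, \ref{com:evenodd} and the fact \eqref{eq:wh} are in hand, each of the four cases is a one-line verification.
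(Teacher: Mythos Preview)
Your proposal is correct and follows essentially the same route as the paper: the same four-case split via Lemma~\ref{le:swib0}, the same appeals to Lemma~\ref{le:A}(a) for $i=j-1$ and Lemma~\ref{le:A}(b) for $i\le j-2$, and the same final commutation of $\te_{\ol 1}$ past $S_{z_i}$. One hypothesis you pass over in Case~4: Lemma~\ref{le:A}(b) is applied to $S_{u_i}b'$, so you need $\te_{\ol 1}S_{u_i}b'=0$, not just $\te_{\ol 1}b'=0$; the paper obtains this by noting that $\te_{\ol 1}$ commutes with $S_{u_i}$, which you should make explicit.
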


\begin{theorem} \label{th:char.h.w}
Assume that $b$ is a $\gl(n)$-highest
weight vector in $\B^{\tensor (N-1)}$
and $b_0\seteq1 \tensor \tf_1 \cdots \tf_{j-1} b$ is a $\gl(n)$-highest
weight vector in $\B^{\tensor N}$. Then $b_0$ is a highest
weight vector if and only if $b$ is a highest weight vector and
$\wt(b_0) = \wt(b)+\eps_j$ is a strict partition.

\begin{proof}
Note that $\wt(b)$ and $\wt(b_0)$ are partitions.

If $b_0$ is a highest weight vector, then by Theorem~\ref{th:h.w.
vectors} and Proposition~\ref{prop:strict partition}, $b$ is a
highest weight vector and
 $\wt(b_0)$ is a strict partition.

Conversely, if $b$ is a highest weight vector so that $\wt(b)$ is a strict partition and $\wt(b) + \eps_j$ is still a strict partition,
then we have
 $\langle k_{j-1} - k_j, \wt(b) \rangle \geq 2 $
and hence, by Proposition~\ref{pro:h.w. vectors 2}, $b_0$ is a
highest weight vector.
\end{proof}
\end{theorem}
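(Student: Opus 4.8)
The plan is to derive this theorem purely by assembling three results already established above — Theorem~\ref{th:h.w. vectors}, Proposition~\ref{prop:strict partition}, and Proposition~\ref{pro:h.w. vectors 2} — together with one elementary observation about the combinatorics of strict partitions. First I would dispose of the weight bookkeeping. Since $\wt(\young(1))=\eps_1$ and each $\tf_i$ lowers the weight by $\alpha_i=\eps_i-\eps_{i+1}$, we get
\[
\wt(b_0)=\eps_1+\wt(b)-\sum_{i=1}^{j-1}\alpha_i=\eps_1+\wt(b)-(\eps_1-\eps_j)=\wt(b)+\eps_j,
\]
so the weight identity in the statement holds automatically; moreover, since $b_0$ (hence also $b$) is assumed to be a $\gl(n)$-highest weight vector, $\wt(b_0)$ and $\wt(b)$ are already partitions.

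For the ``only if'' direction, suppose $b_0$ is a highest weight vector. Then Theorem~\ref{th:h.w. vectors} gives at once that $b$ is a highest weight vector, and Proposition~\ref{prop:strict partition} gives that $\wt(b_0)$ is a strict partition. In view of the weight identity above, this is exactly the asserted conclusion, so nothing more is needed here.

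For the ``if'' direction, assume $b$ is a highest weight vector and that $\wt(b)+\eps_j$ is a strict partition. By Proposition~\ref{prop:strict partition}, $\wt(b)=(\la_1>\la_2>\cdots>\la_r>0)$ is itself a strict partition; writing $\la_i=\langle k_i,\wt(b)\rangle$, the key point is to verify the numerical gap condition $\langle k_{j-1},\wt(b)\rangle\ge\langle k_j,\wt(b)\rangle+2$ required by Proposition~\ref{pro:h.w. vectors 2} (the case $j=1$ being vacuous). Indeed, in $\wt(b)+\eps_j$ the $(j-1)$-st and $j$-th entries are $\la_{j-1}$ and $\la_j+1$, and since membership in $\Lambda^+$ forbids two equal nonzero consecutive parts and $\la_j+1\ge1$, one cannot have $\la_{j-1}=\la_j+1$; hence $\la_{j-1}\ge\la_j+2$. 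Feeding this inequality into Proposition~\ref{pro:h.w. vectors 2} yields that $b_0=1\tensor\tf_1\cdots\tf_{j-1}b$ is a highest weight vector, finishing the proof.

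I do not expect a substantial obstacle at this stage: all of the genuinely hard analysis (the action of the $S_w$ on the relevant $\gl(n)$-highest weight vectors, and the case-by-case verification that the odd Kashiwara operators annihilate $b_0$) has already been carried out in Theorem~\ref{th:h.w. vectors} and Proposition~\ref{pro:h.w. vectors 2}. The only point where a careless reading could go wrong is the translation between ``$\wt(b)+\eps_j\in\Lambda^+$'' and the gap condition $\langle k_{j-1}-k_j,\wt(b)\rangle\ge2$, so I would make sure to state the strict-partition convention explicitly when writing that step.
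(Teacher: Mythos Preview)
Your proposal is correct and follows essentially the same approach as the paper's proof: both directions are handled by directly invoking Theorem~\ref{th:h.w. vectors} and Proposition~\ref{prop:strict partition} for ``only if'', and Proposition~\ref{pro:h.w. vectors 2} together with the gap inequality $\langle k_{j-1}-k_j,\wt(b)\rangle\ge 2$ for ``if''. You have simply made explicit the weight computation $\wt(b_0)=\wt(b)+\eps_j$ and the derivation of the gap condition from the strict-partition hypothesis, both of which the paper leaves implicit.
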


\vskip 1cm

\section{Existence and uniqueness}

In this section, we state and prove the main result of our paper:
the existence and uniqueness theorem for crystal bases. We first
prove several lemmas that are needed in the proof of our main
theorem.

We set
\eq
\tki=
q^{k_i-1} k_{\ol i}\quad\text{ for all $i=1, \ldots, n$.}
\eneq

\begin{lemma} \label{le:invarianct under tildeki}
Let $M$ be a $\Uq$-module in $\Oint$.
\bna
\item
For $\mu\in \wt(M)$ and $i\in\{1,\ldots,n-1\}$
such that $\mu+\alpha_i\not\in\wt(M)$, we have
$$\tk_{\ol{i+1}}=S_i\circ\tk_{\ol i}\circ S_i\quad\text{as endomorphisms of $M_\mu$,}$$
where $S_i$ is defined in {\rm Remark~\ref{rem:teibar}}.
\item Assume that $\la\in\wt(M)$ satisfies $\la+\alpha_i\not\in \wt(M)$
for all $i=1,\ldots,n-1$.
If $(L, B, l_{B})$ is a
crystal basis of $M$, then $L_\la$ is invariant under
$\tki$ for all $i=1,\ldots,n$.
\ee
\end{lemma}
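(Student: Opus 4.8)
The plan is as follows. For part (a), fix $i\in\{1,\ldots,n-1\}$ and $\mu\in\wt(M)$ with $\mu+\alpha_i\notin\wt(M)$, and first record three preliminary facts. Since $e_i$ has weight $\alpha_i$ and $M_{\mu+\alpha_i}=0$, we have $e_iM_\mu=0$; and since $M$ is a finite-dimensional $U_q(\gl(n))$-module via the even generators, its weight set is a union of weight sets of integrable $\gl(n)$-modules, so each $\alpha_i$-string is an unbroken symmetric interval and $\mu+\alpha_i\notin\wt(M)$ forces $\mu+j\alpha_i\notin\wt(M)$ for all $j\ge1$; in particular $\ell\seteq\langle h_i,\mu\rangle\ge0$. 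Next, from the defining relations $k_{\ol i}f_i-qf_ik_{\ol i}=-f_{\ol i}q^{k_i}$, $f_if_{\ol i}=f_{\ol i}f_i$ and $q^{k_i}f_i=q^{-1}f_iq^{k_i}$, a straightforward induction on $m$ gives
\[k_{\ol i}f_i^{(m)}=q^mf_i^{(m)}k_{\ol i}-f_{\ol i}f_i^{(m-1)}q^{k_i}\qquad(m\ge1).\]
Finally, for $u\in M_\mu$ I would decompose $f_{\ol i}u\in M_{\mu-\alpha_i}$ into $e_i$-strings: because $M_{\mu+j\alpha_i}=0$ for $j\ge1$, the only $e_i$-strings meeting $M_{\mu-\alpha_i}$ have highest weight $\mu-\alpha_i$ or $\mu$, so $f_{\ol i}u=w_0+f_iw_1$ with $e_iw_0=e_iw_1=0$, $\wt(w_0)=\mu-\alpha_i$, $\wt(w_1)=\mu$.

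Now both sides of the claimed identity can be computed on $u\in M_\mu$. Restricting the defining relation $e_if_{\ol i}-f_{\ol i}e_i=k_{\ol i}q^{-k_{i+1}}-k_{\ol{i+1}}q^{-k_i}$ to $M_\mu$ (where $e_i=0$) and multiplying by $q^{\langle k_i+k_{i+1},\mu\rangle-1}$ yields
\[\tk_{\ol{i+1}}u=\tki u-q^{\langle k_i+k_{i+1},\mu\rangle-1}\,e_if_{\ol i}u=\tki u-q^{\langle k_i+k_{i+1},\mu\rangle-1}[\ell]\,w_1,\]
the last equality because $e_if_{\ol i}u=e_i(w_0+f_iw_1)=[\ell]w_1$. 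On the other hand $S_iu=f_i^{(\ell)}u$; applying the commutation formula above with $m=\ell$, pulling the resulting powers of $q$ through the weight spaces involved, and then applying $S_i$ once more — using that $S_i(f_i^{(\ell)}v)=v$ for any $e_i$-highest $v$ of $h_i$-weight $\ell$, that $f_i^{(\ell-1)}w_0=0$ (its $e_i$-string has length $\ell-1$), and that $f_i^{(\ell-1)}f_iw_1=[\ell]f_i^{(\ell)}w_1$ — I obtain $S_i\tki S_iu=\tki u-q^{\langle k_i+k_{i+1},\mu\rangle-1}[\ell]w_1$ as well. Hence $\tk_{\ol{i+1}}=S_i\circ\tki\circ S_i$ on $M_\mu$, which is (a).

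For part (b), recall from Remark~\ref{rem:teibar} that $L$ is stable under $\tkone=\tk_{\ol1}$ and under every $S_i$, all of which preserve the weight decomposition; thus $\tk_{\ol1}L_\nu\subset L_\nu$ and $S_iL_\nu=L_{s_i\nu}$ for each weight $\nu$. Since $M\in\Oint$ is completely reducible with highest weights in $\Lambda^+$, the maximal weight $\lambda$ is the highest weight of a constituent, hence a strict partition; consequently, for each pair $1\le k<j$ the weight $s_{k+1}s_{k+2}\cdots s_{j-1}\lambda+\alpha_k$ also lies outside $\wt(M)$ — its decreasing rearrangement dominates that of $\lambda+\alpha_k\notin\wt(M)$, and $\wt(M)$ is downward closed under dominance. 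Therefore part (a) applies at each of these weights, and iterating it along the chain $\lambda,\ s_{j-1}\lambda,\ s_{j-2}s_{j-1}\lambda,\ \ldots,\ s_1s_2\cdots s_{j-1}\lambda$ gives
\[\tk_{\ol j}\big|_{M_\lambda}=S_{j-1}S_{j-2}\cdots S_1\circ\tk_{\ol1}\circ S_1S_2\cdots S_{j-1}\big|_{M_\lambda}\]
as a composition of maps running through the corresponding sequence of weight spaces. Each factor carries lattices to lattices, so $\tk_{\ol j}L_\lambda\subset L_\lambda$; this is (b). (For $j>\ell(\lambda)$ one also has $\tk_{\ol j}=0$ on $M_\lambda$ by the defining property of $\Oint$, consistent with the above.)

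The step I expect to be the main obstacle is the verification, needed in (b), that part (a) is genuinely applicable at the intermediate weights $s_{k+1}\cdots s_{j-1}\lambda$ — i.e.\ the combinatorial assertion $s_{k+1}\cdots s_{j-1}\lambda+\alpha_k\notin\wt(M)$. This rests on the saturatedness of the weight set of a finite-dimensional $\gl(n)$-module together with a short dominance comparison with $\lambda+\alpha_k$; by contrast, the identity in (a) is, modulo the $q$-power bookkeeping, a direct consequence of the defining relations.
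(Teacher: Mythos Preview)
Your argument is correct in both parts, but it takes a somewhat different route from the paper's proof, especially in (b).

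For (a), the paper writes $S_i^{-1}=e_i^{(\ell)}$ and pushes $e_i^{(\ell)}$ past $k_{\ol i}$: from $e_i^{(s)}k_{\ol i}=e_ik_{\ol i}e_i^{(s-1)}-[s-1]k_{\ol i}e_i^{(s)}$ it computes $e_i^{(\ell)}q^{k_i-1}k_{\ol i}f_i^{(\ell)}$ directly, after first rewriting $k_{\ol{i+1}}$ on $M_\mu$ as $-[\ell-1]k_{\ol i}+e_ik_{\ol i}f_i$. You instead push $k_{\ol i}$ past $f_i^{(\ell)}$ via $k_{\ol i}f_i^{(m)}=q^mf_i^{(m)}k_{\ol i}-f_{\ol i}f_i^{(m-1)}q^{k_i}$ and then exploit the $i$-string decomposition $f_{\ol i}u=w_0+f_iw_1$; this is equally valid and of comparable length. (Your implicit check that $e_i(\tki u)=0$, needed so that $S_i(f_i^{(\ell)}\tki u)=\tki u$, follows because $e_{\ol i}u\in M_{\mu+\alpha_i}=0$.)

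For (b), the paper avoids your dominance argument entirely by a simple trick: it replaces $M$ by the submodule $M'=\uqqn M_\lambda$, which is highest weight of weight $\lambda$, so $\wt(M')\subset\lambda-Q^+$ and the condition $\mu_{j+1}+\alpha_j\notin\wt(M')$ is immediate (the $\alpha_j$-coefficient of $\mu_{j+1}+\alpha_j-\lambda$ is $+1$). Since $L'_\lambda=L_\lambda$, this suffices. Your approach of staying inside $M$ and showing $(s_{k+1}\cdots s_{j-1}\lambda+\alpha_k)^+\geq(\lambda+\alpha_k)^+$ together with downward closure of $\wt(M)$ under dominance also works, but is combinatorially heavier. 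Note also that your assertion that $\lambda$ is a strict partition is true (because $e_{\ol i}M_\lambda\subset M_{\lambda+\alpha_i}=0$ as well, so $M_\lambda$ consists of $\uqqn$-highest weight vectors and hence $\lambda\in\Lambda^+$), but you neither justify it nor actually use it---your dominance comparison only requires that $\lambda$ be a partition.
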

\begin{proof}
(a) Set $\ell\seteq\lan h_i,\mu\ran\ge0$.
Then $S_i\cl M_\mu\isoto M_{s_i\mu}$
is given by $f_i^{(\ell)}$, and
its inverse is given by $e_i^{(\ell)}$.
%
Note that $e_iM_\mu=0$.

From the defining relation it follows that
\begin{equation*}
\begin{array}{lll}
e_{\ol i} f_i - f_i e_{\ol i} &= e_{\ol i} q^{-k_i} q^k_i f_i - f_i e_{\ol i} q^{-k_i} q^{k_i}\\
                              &= (k_{\ol i} e_i - q e_i k_{\ol i}) q^{k_i} f_i - f_i (k_{\ol i} e_i - q e_i k_{\ol i}) q^{k_i} \\
                              &= (k_{\ol i} e_i - q e_i k_{\ol i}) f_i q^{k_i -1}  - f_i (k_{\ol i} e_i - q e_i k_{\ol i}) q^{k_i} \\
                              &= k_{\ol i} e_i f_i q^{k_i -1} - q e_i k_{\ol i} f_i q^{k_i -1} - f_i k_{\ol i} e_i q^{k_i} + q f_ie_i k_{\ol i} q^{k_i}\\
                              &= k_{\ol i}\left(f_i e_i + \dfrac{q^{h_i}-q^{-h_i}}{q - q^{-1}} \right)q^{k_i -1}
                              - q e_i k_{\ol i} f_i q^{k_i -1} - f_i k_{\ol i} e_i q^{k_i} + q f_ie_i k_{\ol i} q^{k_i}.

\end{array}
\end{equation*}
Thus on $M_{\mu}$, we have
\begin{equation*}
\begin{array}{lll}
e_{\ol i} f_i - f_i e_{\ol i} &= k_{\ol i}
\dfrac{q^{h_i}-q^{-h_i}}{q - q^{-1}} q^{k_i -1} -  e_i k_{\ol i} f_i
q^{k_i},
\end{array}
\end{equation*}
which yields
\begin{equation} \label{eq:k i+1 bar}
\begin{array}{lll}
k_{\ol{i+1}} &= q^{-h_i} k_{\ol i} -(e_{\ol i} f_i - f_i e_{\ol i}) q^{-k_i} \\
             &= q^{-h_i} k_{\ol i} - \left(k_{\ol i} \dfrac{q^{h_i}-q^{-h_i}}{q - q^{-1}} q^{k_i -1} -  e_i k_{\ol i} f_i q^{k_i} \right) q^{-k_i} \\
             &= q^{-h_i} k_{\ol i} - k_{\ol i} \dfrac{q^{h_i}-q^{-h_i}}{q - q^{-1}} q^{-1} +  e_i k_{\ol i} f_i  \\
             &= -k_{\ol i} \left(  \dfrac{q^{h_i -1}-q^{-h_i+1}}{q - q^{-1}} \right) +  e_i k_{\ol i} f_i  \\
             &= -[\ell-1] k_{\ol i} +  e_i k_{\ol i} f_i.  \\
\end{array}
\end{equation}

On the other hand, we have, similarly to \eqref{eq:k1serre},
\begin{equation*}
\begin{array}{lll}
e_i ^{(2)} k_{\ol i} = e_i k_{\ol i} e_i- k_{\ol i} e_i^{(2)}. \\
\end{array}
\end{equation*}
By induction on $s$, we obtain
\begin{equation*}
\begin{array}{lll}
e_i ^{(s)} k_{\ol i} = e_i k_{\ol i} e_i^{(s-1)}-[s-1] k_{\ol i} e_i^{(s)} & (s \geq 1). \\
\end{array}
\end{equation*}
If $\ell >0$, we have on $M_{\mu}$
\begin{equation*}
\begin{array}{lll}
e_i^{(\ell)} q^{k_{i}-1} k_{\ol i} f_i^{(\ell)}
   &= q^{-\ell} e_i^{(\ell)} k_{\ol i} f_i^{(\ell)} q^{k_i -1} \\
   &= q^{-\ell} (e_i k_{\ol i} e_i^{(\ell-1)}-[\ell-1] k_{\ol i} e_i^{(\ell)}) f_i^{(\ell)} q^{k_i -1} \\
   &= q^{-\ell} (e_i k_{\ol i} f_i - [\ell-1]k_{\ol i}) q^{k_i -1} \\
   &= (e_i k_{\ol i} f_i - [\ell-1]k_{\ol i}) q^{k_{i+1} -1} \\
   &= k_{\ol{i+1}} q^{k_{i+1} -1} = \tk_{\ol {i+1}}.
\end{array}
\end{equation*}

\noi
If $\ell=0$, then $f_i M_{\mu}=0$,  and hence
\eqref{eq:k i+1 bar} implies $k_{\ol{i+1}} = k_{\ol i}$.
Therefore we have
$\tk_{\ol{i+1}}=k_{\ol{i+1}} q^{k_{i+1}-1}= k_{\ol i}q^{k_{i}-1}=\tk_{\ol{i}}$ on $M_{\mu}$.
In the both cases, we have $\tk_{\ol{i+1}} = S_i^{-1} \tki S_i$ on $M_{\mu}$.

\bigskip
\noi
(b)\quad
Let $M'=\uqqn M_\la \subset M$, and let $L'=L \cap M'$.
Set $\mu_j \seteq s_{j}\cdots s_{i-1} \la $ for $j=1,\ldots, i$.
Then $\langle h_{j}, \mu_{j+1} \rangle \ge0$, $s_j\mu_{j+1}=\mu_j$, and
$\mu_{j+1} + \alpha_j \notin \wt(M')$.
{}From (a) it follows that
$\tk_{\ol{j+1}}\vert_{M'_{\mu_{j+1}}}=S_j\circ\tk_{\ol j}\circ S_{j}\vert_{M'_{\mu_{j+1}}}$.
Hence, if $L'_{\mu_j}$ is stable under  $\tk_{\ol{j}}$,
then $L'_{\mu_{j+1}}$ is stable under  $\tk_{\ol{j+1}}$.
Since $L'_{\mu_1}$ is stable under $\tkone$,
$L'_{\mu_j}$ is stable under $\tk_{\ol{j}}$ for
all $j=1,\ldots, i$ by induction. 
In particular, $L_{\la}=L'_\la$  is stable under $\tki$.
\end{proof}

\vskip 3mm

\begin{lemma} \label{le:uniqueness of crystal lattice}
  Let $M$ be a $\uqqn$-module in $\Oint$,
and $\la\in P^{\ge0}$.
Let $(L, B, l_B)$ be a crystal basis of $M$ such that
any connected component of $B$ intersects with $B_\la$.
Let $L'$ be an $\A$-submodule of $M$
with the weight space decomposition $L'=\soplus_{\mu \in P^{\geq 0}} (L' \cap M_{\mu}) $,
which is stable under  $\tei$, $\tfi$ $(i=1,\ldots,n-1,\ol1)$.
Then
\bna

\item  $L'_{\lambda} \subset L_{\lambda}$ implies $L' \subset
L$,

\item  $L'_{\lambda} \supseteq L_{\lambda}$ implies $L' \supseteq L$.
\end{enumerate}

\begin{proof}
(a)
Assume that $L'_{\lambda} \subset L_{\lambda}$.
Set $S \seteq (L \cap q L') / (qL \cap qL') $.
Then $S \subset L/qL$ and $S$ is stable under $\tei$, $\tfi$
($i=1,\ldots,n-1,\ol1$).
Note that
$$S_{\lambda} = S \cap (L/qL)_{\lambda} = (L_{\lambda} \cap q L'_{\lambda}) / (q L_{\lambda} \cap q L'_{\lambda}) = 0.$$
For each $b \in B$, let $P_b \cl L / qL \twoheadrightarrow l_b$ be the
canonical projection.
Since $S_{\lambda} = 0$, we have
$P_{b} (S) = 0$ for any $b \in B_\la$. If $\tei b \neq 0$ for some $i=1,\ldots, n-1,
\overline{1}$, then $\tei\circ P_b = P_{\tei b}\circ \tei$ implies
$\tei P_b (S) = P_{\tei b} \tei (S) \subset P_{\tei b} (S)$.
Therefore, if $P_{\tei b} (S) = 0$, then $P_b (S) = 0$.
The same property holds for $\tfi$.

Since any $b \in B$ can be connected with
an element of weight $\la$ by a sequence of
operators in  $\tei$, $\tfi$ ($i=1,\ldots,n-1,\ol1$), we have
$P_b(S)=0$ for all $b \in B$. It follows that $S=0$ and hence $L
\cap q L' \subset q L$.

Since
$$L ' \cap q^{-m} L \subset q^{-(m-1)} (L' \cap q^{-1} L ) \subset q^{-(m-1)} L$$
for all $m\ge1$, we have $L ' \cap q^{-m} L \subset L' \cap q^{-(m-1)} L$.
Hence we obtain $L' \cap q^{-m} L \subset L$.
It follows that $L' \subset L$ as desired.

\bigskip
\noindent
(b) Assume that $L'_{\lambda} \supset L_{\lambda}$.
Set $S \seteq (L' \cap L) / (L' \cap qL)$.
Then $S \subset L/qL$ and $S$ is stable under $\tei$, $\tfi$.
Note that $l_{b}\subset S$ for any $b \in B_\la$.
If $\tei b  \neq 0$ and $l_b \subset S$,
 then $l_{\tei b} = \tei l_b \subset \tei S \subset S$.

The same is true for $\tfi$. Thus we have
$L/qL = \soplus_{b \in B} l_b \subset S$. By Nakayama's lemma, we
have $L' \cap L = L$.
\end{proof}
\end{lemma}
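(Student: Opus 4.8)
The plan is to prove both parts by the same two-step scheme. First I would translate the hypothesis on the weight space $M_\la$ into a statement about $\C$-subspaces of $L/qL$, and then propagate that statement across all of $B$, using the assumption that every connected component of $B$ meets $B_\la$; the resulting ``mod $q$'' statement is then bootstrapped up to the asserted inclusion of $\A$-lattices. Throughout, the structural facts I would rely on are that $L$ is stable under all $\tei,\tfi$ $(i=1,\ldots,n-1,\ol1)$, that these operators commute with multiplication by $q$ on $M$ (so $L'$ stable $\Rightarrow$ $qL'$ stable), that everything in sight is weight-graded, and that on $L/qL=\soplus_{b\in B}l_b$ one has $\tei\circ P_b=P_{\tei b}\circ\tei$ with $\tei\cl l_b\isoto l_{\tei b}$ an isomorphism whenever $\tei b\neq0$ (and likewise for $\tfi$), where $P_b\cl L/qL\twoheadrightarrow l_b$ is the projection.

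For (a), assume $L'_\la\subset L_\la$ and set $S\seteq(L\cap qL')/(qL\cap qL')$, which embeds as a $\C$-subspace of $L/qL$ and is stable under all $\tei,\tfi$ since $L$, $L'$, hence $qL'$, are. From $L'_\la\subset L_\la$ one computes $S_\la=0$, hence $P_b(S)=0$ for every $b\in B_\la$. The propagation step: if $\tei b\neq0$ and $P_{\tei b}(S)=0$, then $\tei P_b(S)=P_{\tei b}\tei(S)\subset P_{\tei b}(S)=0$, and injectivity of $\tei$ on $l_b$ forces $P_b(S)=0$; the same works for $\tfi$. Since each connected component of $B$ contains a weight-$\la$ vector, an induction on the distance to $B_\la$ gives $P_b(S)=0$ for all $b$, so $S=0$, i.e. $L\cap qL'\subset qL$. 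Equivalently $L'\cap q^{-1}L\subset L$; then I would show $L'\cap q^{-m}L\subset L$ for all $m$ by induction, noting that multiplying $L'\cap q^{-m}L$ by $q^{m-1}$ lands it in $L'\cap q^{-1}L\subset L$, whence $L'\cap q^{-m}L\subset q^{-(m-1)}L$, hence $L'\cap q^{-m}L\subset L'\cap q^{-(m-1)}L$, and iterate down to $m=0$. Since $M=\bigcup_m q^{-m}L$, taking the union yields $L'\subset L$.

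For (b), assume $L_\la\subset L'_\la$ and dually set $S\seteq(L'\cap L)/(L'\cap qL)\subset L/qL$, again stable under $\tei,\tfi$. Now $S_\la=(L/qL)_\la$, so $l_b\subset S$ for every $b\in B_\la$; propagating in the opposite direction, $l_b\subset S$ and $\tfi b\neq0$ give $l_{\tfi b}=\tfi l_b\subset S$ (and similarly with $\tei$), so connectedness of components with $B_\la$ forces $l_b\subset S$ for all $b\in B$. Hence $L/qL=\soplus_b l_b\subset S$, i.e. $S=L/qL$, which says $(L'\cap L)+qL=L$. As $L$ is free of finite rank over the local ring $\A$ (from $\F\otimes_\A L\isoto M$ with $M$ finite-dimensional), Nakayama's lemma applied to $L/(L'\cap L)$ gives $L'\cap L=L$, i.e. $L\subset L'$.

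The step I expect to be the crux is the propagation argument, and the point that needs care there is the choice of operator set: ``connected component of $B$'' must be read with respect to exactly $\tei,\tfi$ for $i=1,\ldots,n-1,\ol1$, since these are precisely the operators under which both $L$ and $L'$ are stable. This is consistent with the $\qn$-crystal structure, because the remaining odd operators $\teibar,\tfibar$ $(i\geq2)$ are $S_w$-conjugates of $\teone,\tfone$ with each $S_w$ a composition of powers of the $\tei,\tfi$ $(i\leq n-1)$; thus ``$b'=\teibar b$'' already joins $b$ and $b'$ by a path using only $\tei,\tfi$ $(i\leq n-1,\ol1)$, so those operators introduce no new connectivity. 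Once this is fixed, everything else is routine bookkeeping with $\A$-lattices and the exact compatibility of the decomposition $L/qL=\soplus_b l_b$ with the Kashiwara operators.
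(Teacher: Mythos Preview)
Your proof is correct and follows essentially the same approach as the paper: the same auxiliary subspace $S$ in each part, the same propagation of $P_b(S)=0$ (resp.\ $l_b\subset S$) along the crystal graph using $\tei\circ P_b=P_{\tei b}\circ\tei$, and the same bootstrap/Nakayama conclusion. Your closing paragraph on why connectivity with respect to $i=1,\ldots,n-1,\ol1$ suffices is a useful gloss the paper leaves implicit, but the argument itself is the same.
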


\begin{lemma} \label{le:crystal bases for irreducible summands}
Let $M$ be a highest weight $\uqqn$-module with highest weight $\la \in \La^+$
in the category $\Oint$.
Suppose that $M$ has a crystal basis $(L, B, l_B)$ such that
$B_\la = \{b_\la \}$ and $B$ is connected.
Let $L_\la = \soplus_{j=1}^s E_j$ be a decomposition into indecomposable modules over
$\A[C_1,\ldots, C_r]$ \ro see {\rm Remark~\ref{re:Clifford algebras}}\rf,
where $r=\ell(\la)$, and let
$$
\begin{aligned}
M_j \seteq \uqqn E_j, & \quad L_j \seteq M_j \cap L & \text{and} \quad l^{j}_b \seteq l_b \cap \big(L_j / qL_j \big).
\end{aligned}
$$
Then we have
\bna \item $M_j$ is irreducible over $\uqqn$, \item
$M=\soplus_{j=1}^s M_j$, $L = \soplus_{j=1}^s L_j$ and $l_b =
\soplus_{j=1}^s l^j_b$, \item $(L_j, B, (l^{j}_b)_{b \in B})$ is
a crystal basis of $M$. \ee
\begin{proof}
By Remark~\ref{re:Clifford algebras},
we see that
$(M_j)_\la \simeq\F \otimes_\A E_j$
is an irreducible module over $\F[C_1, \ldots, C_r]$ for each $j = 1,2, \ldots, s$.
Hence, Proposition~\ref{prop:uniqueness of lattice of highest weight space of an irreducible module} (a) implies
that $M_j$ is irreducible over $\uqqn$ and $M = \soplus_{j=1}^s M_j$.
Note that
$$L_j/ q L_j \subset L / qL \quad (j=1,2, \ldots, s).$$

Since $\soplus_{j=1}^s(L_j)_\la =\soplus_{j=1}^s \big(M_j \cap L_\la \big)=\soplus_{j=1}^s E_j=L_\la$,
we have
$$l_{b_{\la}} = L_\la / q L_\la = \soplus_{j=1}^s \big( (L_j)_\la / q(L_j)_\la \big)
=\soplus_{j=1}^s l^j_{b_{\la}}.$$
Consider $b_1, b_2 \in B$ such that $b_2=\tei b_1$ (equivalently,
$b_1=\tfi b_2$) for some $i =1,2,\ldots, n-1, \ol 1$.
Then we have injective maps
$$\tei|_{l^{j}_{b_1}} \cl l^{j}_{b_1} \mono l^{j}_{b_2}, \quad \tfi|_{l^{j}_{b_2}}
\cl l^{j}_{b_2} \mono l^{j}_{b_1}.$$
Hence comparing their dimensions, we conclude that
$$\tei \cl {l^{j}_{b_1}} \isoto l^{j}_{b_2}\quad\text{and}\quad
\tfi \cl {l^{j}_{b_2}} \isoto l^{j}_{b_1}
\quad\text{for all $j=1,2,\ldots, s$.}$$

Therefore $l_{b_1}=\soplus_{j=1}^s l^{j}_{b_1}$ if and only if
$l_{b_2}=\soplus_{j=1}^s l^{j}_{b_2}$.
Since $B$ is connected,
$\soplus_{j=1}^s l^{j}_{b} =l_{b}$
for all $b \in B$.

\noi
Since
$$L / q L = \soplus_{b \in B} l_b =
\soplus_{j=1}^s \soplus_{b \in B}  l^{j}_b
\subset \soplus_{j=1}^s L_j /qL_j,$$
Nakayama's lemma implies that
$L=\soplus_{j=1}^s L_j$, and $(L_j, B, (l^{j}_b)_{b \in B})$
is a crystal basis of $M_j$. 
\end{proof}
\end{lemma}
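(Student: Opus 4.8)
The plan is to first decompose $M$ as a $\uqqn$-module by exploiting the Clifford-algebra description of its highest weight space, and then transport this decomposition to the crystal lattice $L$ and to each space $l_b$, using the connectedness of $B$.

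First I would prove~(a) and the $\uqqn$-decomposition in~(b). Because $M$ is a highest weight module of highest weight $\la$, the generators $e_i,e_{\ol i}$ annihilate $M_\la$ while $U^0$ preserves it, so by the triangular decomposition $(M_j)_\la=\F\otimes_\A E_j$ and each $M_j$ lies in $\rm HT(\la)$. Since $E_j$ is a direct summand of the free $\A$-module $L_\la$, it is $\A$-free, so Remark~\ref{re:Clifford algebras} gives $E_j\cong V\otimes_\C\A$, whence $\F\otimes_\A E_j\cong V\otimes_\C\F$ is an \emph{irreducible} $\F[C_1,\ldots,C_r]$-module. Applying the equivalence $\rm HT(\la)\isoto\SMod(\F[C_1,\ldots,C_r])$, $N\mapsto N_\la$, of Proposition~\ref{prop:uniqueness of lattice of highest weight space of an irreducible module}~(a), we conclude that each $M_j$ is irreducible over $\uqqn$ and that the decomposition $M_\la=\soplus_j(M_j)_\la$ transports to $M=\soplus_j M_j$.

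Next I would check that $L_j=M_j\cap L$ is a weight-graded $\A$-submodule stable under the Kashiwara operators $\tei,\tfi$ $(i=1,\ldots,n-1,\ol 1)$ and under $\tkone$: each $M_j$, being a $\uqqn$-submodule, is preserved by $e_i,f_i,k_{\ol 1},q^h$, hence by the string decompositions and by the operators $S_w$ out of which these Kashiwara operators are built. Moreover, since $M_j$ is a direct summand of $M$ one has $qL\cap L_j=qL_j$, so the canonical map $L_j/qL_j\to L/qL$ is injective and $l^j_b=l_b\cap(L_j/qL_j)$ makes sense inside $L/qL$. At the highest weight vertex, $B_\la=\{b_\la\}$ forces $l_{b_\la}=L_\la/qL_\la$, and $(L_j)_\la=(\F\otimes_\A E_j)\cap L_\la=E_j$, so $l_{b_\la}=\soplus_j E_j/qE_j=\soplus_j l^j_{b_\la}$. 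I would then propagate this along the crystal graph: whenever $b_2=\tei b_1\ne0$, the operators $\tei$ and $\tfi$ restrict to injections $l^j_{b_1}\hookrightarrow l^j_{b_2}$ and $l^j_{b_2}\hookrightarrow l^j_{b_1}$, which are therefore both isomorphisms by comparing dimensions; hence $l_{b_1}=\soplus_j l^j_{b_1}$ holds if and only if $l_{b_2}=\soplus_j l^j_{b_2}$. Since $B$ is connected, $l_b=\soplus_j l^j_b$ for every $b\in B$.

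Finally I would assemble the pieces. From $L/qL=\soplus_b l_b=\soplus_j\bigl(\soplus_b l^j_b\bigr)\subset\soplus_j L_j/qL_j$ and Nakayama's lemma it follows that $L=\soplus_j L_j$; then a comparison of dimensions gives $L_j/qL_j=\soplus_b l^j_b$, and $\F\otimes_\A L_j\isoto M_j$. All the remaining axioms of Definition~\ref{def:crystal base} for the triple $(L_j,B,(l^j_b)_{b\in B})$---nonvanishing of the $l^j_b$, the weight condition, stability under $\tkone$, and the behaviour of $\tei,\tfi$---are inherited from the corresponding properties of $(L,B,l_B)$ together with the isomorphisms $\tei\colon l^j_b\isoto l^j_{\tei b}$ (and likewise for $\tfi$) obtained above; this establishes~(b) and~(c). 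I expect the propagation step to be the main obstacle: one must check carefully that the even and odd Kashiwara operators really preserve the subspaces $l^j_b$, that the two induced injections pin the dimensions down so that one gets isomorphisms, and that the base case at $b_\la$ is genuinely a direct sum---which is exactly where the indecomposability of the $E_j$ over $\A[C_1,\ldots,C_r]$ and their freeness over $\A$ are used.
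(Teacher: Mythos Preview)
Your proposal is correct and follows essentially the same route as the paper's proof: irreducibility of $M_j$ via the Clifford description of $(M_j)_\la$ and Proposition~\ref{prop:uniqueness of lattice of highest weight space of an irreducible module}~(a), the base case $l_{b_\la}=\soplus_j l^j_{b_\la}$ from $(L_j)_\la=E_j$, the propagation step using the mutually injective maps $\tei,\tfi$ between $l^j_{b_1}$ and $l^j_{b_2}$, and Nakayama's lemma to conclude $L=\soplus_j L_j$. You add a few details the paper leaves implicit (freeness of $E_j$, stability of $L_j$ under the Kashiwara operators, the injectivity $L_j/qL_j\hookrightarrow L/qL$), but the architecture is the same.
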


\begin{lemma} \label{le:isomorphic crystals}
Let $M$ be a $\Uq$-module in the category $\Oint$ and
let $(L_1, B_1, l^1_{B_1})$, $(L_2, B_2, l^2_{B_2})$ be
two crystal bases of $M$ such that $L_1=L_2$. If $B_1$ is a
connected abstract $\qn$-crystal and there exist $b_1 \in B_1$, $b_2
\in B_2$ such that $l^1_{b_1}=l^2_{b_2}$, then there exists a bijection
$\vphi \cl B_1 \to B_2$
 which commutes with the Kashiwara operators and $l^1_b=l^2_{\vphi(b)}$
for all $b \in B_1$.

\begin{proof} 
Let us set $S=\set{b\in B_1}{\text{there exists $b'\in B_2$ such that
$l^1_b=l^2_{b'}$}}$.
Then it is easy to see that it is stable under the Kashiwara operators
and it contains $b_1$.
Hence $S$ coincides with $B_1$.
Therefore for every $b\in B_1$,
there exists a $b'\in B_2$ such that $l^1_b=l^2_{b'}$.
Such a $b'$ is unique and we can define
$\vphi$ by $\vphi(b)=b'$.
Since $L_1/qL_1=\soplus\nolimits_{b\in B_1}l^1_b=\soplus\nolimits_{b\in B_2}l^2_b$,
$\vphi\cl B_1\to B_2$ is bijective.
\end{proof}
\end{lemma}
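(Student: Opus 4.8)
The plan is to match $B_1$ with $B_2$ directly inside the single space $L_1/qL_1=L_2/qL_2$. Note that since $L_1=L_2$, the Kashiwara operators $\tei,\tfi$ ($i=1,\ldots,n-1,\ol1$) act on this common quotient as one and the same family of operators, so all the work consists of bookkeeping with the two direct sum decompositions $L/qL=\soplus_{b\in B_1}l^1_b=\soplus_{b'\in B_2}l^2_{b'}$ supplied by Definition~\ref{def:crystal base}\,(c)(ii).

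First I would set
$$S\seteq\set{b\in B_1}{l^1_b=l^2_{b'}\ \text{for some}\ b'\in B_2}$$
and show $S=B_1$. By hypothesis $b_1\in S$. If $b\in S$ with witness $b'$ and $\tei b\ne0$, then Definition~\ref{def:crystal base}\,(c)(iv) gives an isomorphism $\tei\cl l^1_b\isoto l^1_{\tei b}$, so $\tei l^1_b=l^1_{\tei b}\ne0$; since $l^2_{b'}=l^1_b$ inside $L/qL$, this forces $\tei b'\ne0$ (otherwise $\tei l^2_{b'}=0$) and hence $\tei l^2_{b'}=l^2_{\tei b'}$. Thus $l^1_{\tei b}=\tei l^1_b=\tei l^2_{b'}=l^2_{\tei b'}$, so $\tei b\in S$. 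The same argument handles $\tfi$. Since $B_1$ is a connected $\qn$-crystal, $S=B_1$.

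Next I would define $\vphi$ and check it is a bijection commuting with the Kashiwara operators. For $b\in B_1$ pick $b'\in B_2$ with $l^1_b=l^2_{b'}$; this $b'$ is unique, because the $l^2_c$ ($c\in B_2$) are nonzero and in direct sum, so two that coincide have the same index. Put $\vphi(b)\seteq b'$. The previous paragraph already produced, for $\tei b\ne0$, the witness $\tei\vphi(b)$ of $\tei b$, so $\vphi(\tei b)=\tei\vphi(b)$, and likewise $\vphi(\tfi b)=\tfi\vphi(b)$; and when $\tei b=0$ we have $\tei l^1_b=0$, hence $\tei l^2_{\vphi(b)}=0$, which by Definition~\ref{def:crystal base}\,(c)(iv) forces $\tei\vphi(b)=0=\vphi(\tei b)$ (a nonvanishing $\tei\vphi(b)$ would give a nonzero image), and similarly for $\tfi$. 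Injectivity of $\vphi$ follows since $\vphi(b)=\vphi(\tilde b)$ gives $l^1_b=l^1_{\tilde b}$, hence $b=\tilde b$ by the direct sum argument in $L_1/qL_1$; surjectivity follows since $\soplus_{b\in B_1}l^2_{\vphi(b)}=L_1/qL_1=L_2/qL_2=\soplus_{b'\in B_2}l^2_{b'}$, so the sub-direct-sum indexed by $\vphi(B_1)$ must be all of the right-hand side, every $l^2_{b'}$ being nonzero. By construction $l^1_b=l^2_{\vphi(b)}$ for all $b\in B_1$.

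The argument is entirely elementary; the one point needing care is the consistent use of the two direct sum decompositions of the common space $L_1/qL_1=L_2/qL_2$ — to pin down the witness $b'$ uniquely, to get injectivity, and to upgrade $\vphi$ to a surjection — together with the observation that ``$\tei b=0$'' refers to the action of the same operator whether $b$ is read in $B_1$ or as its image in $B_2$.
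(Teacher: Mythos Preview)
Your proof is correct and follows essentially the same approach as the paper: define the set $S$ of $b\in B_1$ whose subspace $l^1_b$ matches some $l^2_{b'}$, use stability under the Kashiwara operators together with connectedness of $B_1$ to get $S=B_1$, and then read off the bijection from the two direct-sum decompositions of $L/qL$. You have simply spelled out in detail the steps the paper leaves as ``easy to see,'' including the explicit verification that $\vphi$ commutes with the Kashiwara operators in both the vanishing and nonvanishing cases.
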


\begin{lemma} \label{le:existence of crystal base for a lattice}
Let $\la \in \Lambda^+$ and assume that $V(\la)$ has a crystal basis $(L_0,B_0,l_{B_0})$
such that $B_0$ is connected and $(B_0)_\la =\{b_\la\}$.
 Let $M \in \Oint$ be a highest weight $\Uq$-module with highest weight
$\la \in \Lambda^+$.
If $E$ is a free $\A$-submodule of $M_{\la}$, which is stable under $\tki$
for $i=1,2,\ldots, n$ and generates $M_{\la}$ over $\F$,
then there exists a unique crystal basis $(L,B,l_B)$ such that
\bna
\item $L_{\la}=E$,

\item $B \simeq B_0$ as an abstract $\qn$-crystal,
\end{enumerate}
\end{lemma}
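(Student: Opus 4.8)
The plan is to construct the crystal basis first on the model module $V(\la)^{\oplus m}$, where one is available from the hypothesis on $V(\la)$, and then transport it to $M$ along $\uqqn$-linear isomorphisms. Since $M$ is a highest weight module with highest weight $\la$, it lies in $\rm HT(\la)$; since $\Oint$ is completely reducible and $M$ is generated by $M_\la$, whose vectors are annihilated by all $e_i$ and $e_{\ol i}$, every irreducible summand of $M$ has highest weight $\la$, so $M\simeq V(\la)^{\oplus m}$ for some $m\ge 1$ (one may also read this off the equivalence $\rm HT(\la)\isoto\SMod(\F[C_1,\dots,C_r])$ of Proposition~\ref{prop:uniqueness of lattice of highest weight space of an irreducible module}(a)). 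Fix such a $\uqqn$-isomorphism $\iota\cl M\isoto M'\seteq V(\la)^{\oplus m}$; then $M'$ carries the crystal basis $\bl(L_0^{\oplus m},B_0,(l_b^{\oplus m})_{b\in B_0}\br)$, whose underlying $\gl(n)$-crystal is $B_0$.

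\emph{Existence.} Set $E'\seteq\iota(E)\subset M'_\la$. Because $\iota$ intertwines the $\uqqn$-action, $E'$ is a free $\A$-lattice in $M'_\la$ with a weight decomposition that is stable under every $\tki=q^{k_i-1}k_{\ol i}$. The lattice $((L_0)_\la)^{\oplus m}=(L_0^{\oplus m})_\la$ has the same three properties, stability under the $\tki$'s following from Lemma~\ref{le:invarianct under tildeki}(b) since $\la+\alpha_i\notin\wt(V(\la))$. Hence Proposition~\ref{prop:uniqueness of lattice of highest weight space of an irreducible module}(b), applied to $M'\in\rm HT(\la)$, produces a $\uqqn$-module automorphism $\Psi$ of $M'$ with $\Psi\bl(((L_0)_\la)^{\oplus m}\br)=E'$. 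Any $\uqqn$-module automorphism preserves weight spaces and commutes with $e_i,f_i,k_{\ol1},q^h$, hence with the $i$-string decompositions and therefore with $\tei,\tfi$; and since $\tkone,\teone,\tfone$ are polynomials in $e_1,f_1,k_{\ol1},q^{k_1},q^{k_2}$ with coefficients in $\F$, it commutes with these as well. Thus $\Psi$, and likewise $\iota^{-1}$, sends crystal bases to crystal bases, and composing I obtain a crystal basis $(L,B,l_B)$ of $M$ with $L=\iota^{-1}\Psi(L_0^{\oplus m})$, $L_\la=\iota^{-1}(E')=E$ and $B=B_0\simeq B_0$.

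\emph{Uniqueness.} Suppose $(L,B,l_B)$ and $(\hat L,\hat B,\hat l_{\hat B})$ are crystal bases of $M$ with $L_\la=\hat L_\la=E$ and $B\simeq B_0\simeq\hat B$. Since $B_0$ is connected and $(B_0)_\la=\{b_\la\}$, both $B$ and $\hat B$ are connected and possess a single weight-$\la$ element, so every connected component meets the weight-$\la$ part; Lemma~\ref{le:uniqueness of crystal lattice}(a) applied with $\hat L_\la\subseteq L_\la$ and (b) applied with $\hat L_\la\supseteq L_\la$ then give $\hat L\subseteq L$ and $\hat L\supseteq L$, so $\hat L=L$. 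With the lattices equal, the unique weight-$\la$ vectors $b_\la\in B$ and $\hat b_\la\in\hat B$ satisfy $l_{b_\la}=(L/qL)_\la=(\hat L/q\hat L)_\la=\hat l_{\hat b_\la}$, so Lemma~\ref{le:isomorphic crystals} yields a bijection $\varphi\cl B\isoto\hat B$ that commutes with the Kashiwara operators and satisfies $l_b=\hat l_{\varphi(b)}$ for all $b$; since $\varphi$ also preserves weights it is an isomorphism of abstract $\qn$-crystals, which identifies the two crystal bases.

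The steps I expect to need the most care are two bookkeeping points behind the existence argument: checking that a $\uqqn$-module isomorphism commutes with the odd Kashiwara operators $\tkone,\teone,\tfone$ (immediate once these are written through the algebra generators and scalars), and noticing that the given lattice $E$ need not respect any chosen splitting $M\simeq V(\la)^{\oplus m}$ — which is precisely the obstruction removed by Proposition~\ref{prop:uniqueness of lattice of highest weight space of an irreducible module}(b). The parity ambiguity in identifying the irreducible summands of $M$ with $V(\la)$ is harmless, since parity plays no role in the definition of a crystal basis.
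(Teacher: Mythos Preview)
Your proof is correct and follows essentially the same route as the paper's. The paper packages the existence step slightly differently---it directly produces a free $\A$-module $K$ with $M\simeq K\otimes_\A V(\la)$ and $E\simeq K\otimes_\A (L_0)_\la$ via the categorical equivalences of Remark~\ref{re:Clifford algebras} and Proposition~\ref{prop:uniqueness of lattice of highest weight space of an irreducible module}, then sets $L=K\otimes_\A L_0$---whereas you first fix an arbitrary splitting $M\simeq V(\la)^{\oplus m}$ and then correct it by the automorphism furnished by Proposition~\ref{prop:uniqueness of lattice of highest weight space of an irreducible module}(b); these are the same idea, and the uniqueness argument (Lemma~\ref{le:uniqueness of crystal lattice} followed by Lemma~\ref{le:isomorphic crystals}) is identical.
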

\begin{proof}
By Lemma~\ref{le:invarianct under tildeki}
and Proposition~\ref{prop:uniqueness of lattice of highest weight
space of an irreducible module}, there exists a finitely generated
free $\A$-module $K$ such that $M \simeq K \tensor_\A V(\la)$ and $E
\simeq K \tensor_\A (L_0)_\la$. Then $(K \tensor_\A (L_0), B_0, (K
\tensor l_b)_{b \in B_0})$ is a crystal basis for $M$. The
uniqueness follows from Lemma~\ref{le:uniqueness of crystal lattice}
and Lemma~\ref{le:isomorphic crystals}.
\end{proof}

\vskip 1ex
For a weight $\la = \la_1 \epsilon_1 + \cdots + \la_n
\epsilon_n \in P$, define $|\la| =\sum_{i=1}^{n} \la_i$.
Now we are ready to state our main theorem.

\Th \label{th:main theorem} \hfill
\bna
\item  Let $M$ be an irreducible highest weight $\Uq$-module with highest weight
$\la \in \La^{+}$. Then there exists a crystal basis $(L, B, l_{B})$
of $M$ such that

\bni

\item $B_{\la} = \{b_{\la} \}$,

\item $B$ is connected.

\end{enumerate}
Moreover, such a crystal basis is unique up to an automorphism of
$M$. In particular, $B$ depends only on $\la$ as an abstract
$\qn$-crystal and we write $B=B(\la)$.

\item The $\qn$-crystal $B(\la)$ has a unique highest weight vector
$b_{\la}$. 

\item A vector $b \in \B \otimes B(\la)$ is a highest weight vector
if and only if
$$b = 1 \otimes \tf_{1} \cdots \tf_{j-1} b_{\la}$$
for some $j$ such that $\la + \epsilon_j$ is a strict partition.

\item Let $M$ be a finite-dimensional highest weight $\Uq$-module
with highest weight
$\la \in \La^{+}$. 
Assume that $M$ has a crystal basis $(L,B(\la), l_{B(\la)})$
such that $L_{\la} / q L_{\la} = l_{b_{\la}}$.
Then we have

\bni

\item $\V \otimes M = \soplus_{\la + \epsilon_j : \text{strict}}M_ j,$
where $M_j$ is a highest weight $\Uq$-module with highest
weight $\la + \epsilon_j$ and $\dim (M_{j})_{\la + \epsilon_j} = 2\dim M_{\la}$,

\item 
if we set $L_{j} = (\mathbf{L} \otimes L) \cap M_{j}$ and
$B_j=\set{b\in\B \otimes B(\la)}{l_b\subset L_j/qL_j}$, then we have
$\B \otimes B(\la)=\coprod\limits_{\la+ \epsilon_j: \text{strict}} B_{j}$
and $L_j/qL_j = \soplus_{b \in B_j} l_{b}$,
\item $M_j$ has a crystal basis $(L_j, B_j, l_{B_j})$,

\item $B_{j}\simeq B(\la + \epsilon_j)$ as an abstract $\qn$-crystal.

\end{enumerate}
\end{enumerate}
 \enth

\begin{proof}
We shall argue by induction on $|\la|$.

For a positive integer $k$, we denote by ${\rm (a)}_{k}$,
${\rm(b)}_{k}$, ${\rm(c)}_{k}$ and ${\rm(d)}_{k}$ the assertions
(a), (b), (c) and (d) for $\la$ with $|\la| = k$,  respectively.

It is straightforward to check ${\rm (a)}_{1}$ and ${\rm (b)}_{1}$.
Assuming the assertions ${\rm (a)}_{k}$,  ${\rm (b)}_{k}$
for $k \leq N$ and the assertions ${\rm (c)}_{k}$, ${\rm (d)}_{k}$ for $k < N $,
let us show ${\rm (a)}_{N+1}$,  ${\rm (b)}_{N+1}$,
${\rm (c)}_{N}$ and ${\rm (d)}_{N}$.

\vs{2ex}
\noindent
\textbf{Step 1} :
We shall prove ${\rm (c)}_N$.
Let $\la$ be a strict partition with $|\la|=N$.
By choosing a sequence of strict partitions $\eps_1=\la_1, \
\la_2, \ \la_3, \ldots, \la_N=\la $ such that
  $\la_{k+1} = \la_k + \eps_{j_k}$ for some $j_k$ and applying ${\rm (d)}_{k}$ on each $\la_k$ successively for $k < N$,
 we can embed $B(\la)$ into $\B^{\tensor N}$.
It follows that $\B \otimes B(\lambda) \subset \B^{\tensor
(N+1)}$.
By ${\rm (b)}_N$, we know that there exists a unique
highest weight vector, say $b_\la$, in $B(\la)$.
By Theorem~\ref{th:char.h.w}, an element $b \in \B \otimes B(\la)$ is a highest weight vector if and only if
$$b = 1 \otimes \tf_{1} \cdots \tf_{j-1} b_{\la}$$
for some $j$ such that $\la+\eps_j$ is a strict partition. So
${\rm (c)}_{N}$ holds.

\vs{2ex}
\noindent
\textbf{Step 2} : We shall show that ${\rm(d)}_N$ holds except (iv).
Let $M$ be a finite-dimensional highest weight module with highest
weight $\la \in \La^{+}$ with $|\la|=N$ and let $(L, B(\la),
l_{B(\la)})$ be a crystal basis of $M$. By Theorem
\ref{th:decomposition}, we have a decomposition
$\V \otimes M = \soplus_{\la +\epsilon_j : \text{strict}} M_ j$, where $M_j$ is a
highest weight $\Uq$-module with highest weight
$\la + \epsilon_j$ and $\dim (M_{j})_{\la + \epsilon_j} = 2 \dim
M_{\la}$.

By Theorem~\ref{th2:tensor product}, $\V
\otimes M$ admits a crystal basis $(\tilde{L},
\ \B \otimes B(\la), \ l_{\B \otimes B(\la)})$
where $\tilde{L}\seteq \mathbf{L} \otimes L$.
Set $L_j \seteq M_j \cap \tilde{L}$.
Note that
$$\F \otimes_\A L_j \isoto M_j \quad \text{and} \quad
L_j = \soplus_{\mu \in P^{\geq 0}} L_j \cap (M_j)_{\mu}. $$

Then we have
$$L_j / q L_j \subset \tilde L / q \tilde L = \soplus_{b \in \B \otimes B(\la)} l_b.$$
Since $\tei (M_j)_{\la+\epsilon_j} = \tilde e_{\ol i} (M_j)_{\la+\epsilon_j}=0 $
for any $i=1,2, \ldots, n-1$ (see Remark~\ref{rem:teibar}),
we have as subspaces of $\tL/q\tL$
$$(L_j)_{\la+\epsilon_j } / q (L_j)_{\la+\epsilon_j } \subset
 \Big( \bigcap_{i=1}^{n-1} \Ker \tei  \bigcap \bigcap_{i=1}^{n-1} \Ker \tilde e_{\ol i} \Big)_{\lambda+\epsilon_j} =
 \soplus _{\stackrel{\wt(b)=\la+\epsilon_j,}{\tei b = \tilde e_{\ol i} b =0 }} l_b = l_{b_{j}},$$
 where $b_j = 1 \otimes \tilde f_1 \cdots \tilde f_{j-1} b_{\la}$ in $\B \otimes B(\la)$.
Here, the last equality follows from ${\rm(c)}_{N}$.

Because ${\rm rank}_\A(L_j)_\mu = \dim_\F (M_j)_\mu$ for any $\mu \in \wt(M_j)$, we have
$$
\begin{aligned}
\dim_{\C} \big((L_j)_{\la+\epsilon_j } / q (L_j)_{\la+\epsilon_j } \big)=& {\rm rank}_\A(L_j)_{\lambda+\epsilon_j}
=\dim_\F (M_j)_{\lambda+\epsilon_j} \\
=&2 \dim_{\F} M_{\la} = \dim_{\C} l_{b_j}
\end{aligned}$$
and hence
$$(L_j / q L_j)_{\la + \epsilon_j}=(L_j)_{\la+\epsilon_j } / q (L_j)_{\la+\epsilon_j } = l_{b_j}.$$
Let $B_j$ be the connected component containing $b_j$ in $\B \otimes B(\la)$.
By ${\rm(c)}_N$ and Lemma~\ref{le:existence of
h.w. vectors}, we obtain $\bigcup_j B_j = \B \otimes B(\la)$.
Since $L_j$ is stable under $\tei, \tfi, \teone$ and $\tfone$, we have
$$\soplus_{b \in B_j} l_b  \subset L_j / q L_j. $$
It follows that \eq \nonumber \tilde L / q \tilde L = \soplus_{b
\in \B \otimes B(\la)} l_b =\soplus_{b \in \bigcup_j B_j} l_b
\subset \sum_j (L_j/ q L_j).
\eneq
By Nakayama's Lemma, we get \eq \label{eq:nakayama lemma} \tilde L
= \sum_j L_j. \eneq
Since $\sum_j L_j = \soplus\nolimits_j L_j$, we
obtain $\tilde L = \soplus\nolimits_j L_j$ and
\eq \nonumber
\soplus_{b \in \bigcup_j B_j} l_b = \tilde L / q \tilde L \simeq
\soplus_j (L_j/ q L_j) \supseteq  \soplus_j \soplus_{b \in B_j} l_{b_j}.
\eneq
Therefore, we obtain
$$ L_j / q L_j = \soplus_{b \in B_j} l_b\quad\text{and}
\quad\B \otimes B(\la) = \coprod_j B_j.$$
Thus $(L_j, \ B_j, \ l_{B_j} =(l_b)_{b \in B_j})$ is a crystal basis of $M_j$.

Note that each $B_j$ has a unique highest weight vector $b_j$ and
that $B_j$ is connected.

\vs{2ex}
\noindent
\textbf{Step 3} : 
We will show ${\rm (a)}_{N+1}$.
Since an irreducible highest weight module
is uniquely determined up to parity change, and since the crystal
structure dose not vary under the parity change functor, it is
enough to show that there exists an irreducible highest weight
module with a crystal basis which satisfies (i) and (ii) in ${\rm
(a)}$.

Let $\la$ be a strict partition with $|\la|=N+1$.
Choose a strict partition $\mu$ and $\ell=1,\ldots, n$
such that $\lambda=\mu +\epsilon_\ell$. By ${\rm (a)}_N$,
there exists an irreducible highest weight $\Uq$-module $M$
of highest weight $\mu$ which has a crystal basis $(L, B(\mu),
l_{B(\mu)})$. Then we have
$$\V \otimes M = \soplus_{\la + \epsilon_j : \text{strict}} M_j,$$
and each $M_j$ has a crystal basis as in {\bf Step 2}. Therefore
there exists a finite-dimensional highest weight $\Uq$-module $M$
with highest weight $\la$ which has a crystal basis $(L, B, l_{B})$
 such that $B$ is connected and $B_\la=\{b_0\}$.
Moreover we see that in {\bf Step 2}, $B$ has a unique highest weight vector.
By Lemma~\ref{le:crystal bases for irreducible summands}, we conclude
that each irreducible summand of $M$ admits a crystal basis
with the abstract crystal $B$ which satisfies (i) and (ii) in ${\rm (a)}$
and $B$ has a unique highest weight vector.

The uniqueness in ${\rm (a)}_{N+1}$ immediately follows from Lemma~\ref{le:invarianct under tildeki}, Lemma~\ref{le:isomorphic crystals} and Proposition~\ref{prop:uniqueness of lattice of highest weight space of an irreducible module}.
The property ${\rm (b)}_{N+1}$ is obvious.
The remaining (iv) in ${\rm (d)}_{N}$ follows
from Lemma~\ref{le:existence of crystal base for a lattice}.
\end{proof}

\begin{corollary} \hfill

\bna
\item Every $U_q(\mathfrak{q}(n))$-module in the category
$\Oint$ has a crystal basis.
\item If $M$ is a $\Uq$-module in the category
$\Oint$ and $(L,B,l_B)$ is a crystal basis of
$M$, then there exist decompositions $M=\soplus_{a\in A}M_a$ as a
$U_q(\mathfrak{q}(n))$-module, $L=\soplus_{a\in A}L_a$ as an
$\A$-module, $B=\coprod_{a\in A}B_a$ as a $\qn$-crystal,
parametrized by a set $A$ such that the following
conditions are satisfied for any $a\in A$ :
\bni
\item
$M_a$ is a highest weight module with
highest weight $\la_a$ and $B_a \simeq B(\la_a)$ for some strict
partition $\la_a$,
\item
$L_a=L\cap M_a$,
$L_a/qL_a=\soplus_{b\in B_a}l_b$, \item $(L_a,B_a,l_{B_a})$ is a
crystal basis of $M_a$.
\ee
\ee
\begin{proof}
(a) Our assertion follows from the semisimplicity of the category
$\Oint$. Indeed, if $M = \soplus_\nu M_{\nu}$
is a decomposition of $M$ into irreducible $\uqqn$-modules, then
each $M_{\nu}$ is an irreducible highest weight module, and hence it
admits a crystal basis $(L_\nu, B_\nu, l_{B_\nu})$ by Theorem
\ref{th:main theorem}. Set
\begin{equation*}
\begin{array}{ccc}
L \seteq \soplus_{\nu} L_\nu, & B \seteq  \coprod_\nu B_\nu, & l_B
\seteq (l_{b})_{b \in B}.
\end{array}
\end{equation*}
Then $(L, B, l_B)$ is a crystal basis of $M$.

\bigskip\noi 
(b) Let $\la$ be a maximal element in $\wt(B)=\wt(M)$. Note that if
$\ell(\la)=r$ is odd, then we have the following commutative
diagram (see Remark~\ref{re:Clifford algebras} for notations):
$$
\xymatrix{
\Mod(\A) \ar[r]^(.34){\sim} \ar[d]^{\C \tensor_{\A/q\A} ( - )} & \SMod(\A[C_1,\ldots, C_r]) \ar[d]^{\C \tensor_{\A/q\A} ( - )}\\
\Mod(\C) \ar[r]^(.34){\sim}        & \SMod(\C[C_1,\ldots, C_r]),
}
$$
and if $r$ is even, then we have the following commutative diagram:
$$\xymatrix{
\SMod(\A) \ar[r]^(.36){\sim} \ar[d]^{\C \tensor_{\A/q\A} ( - )} & \SMod(\A[C_1,\ldots, C_r]) \ar[d]^{\C \tensor_{\A/q\A} ( - )}\\
\SMod(\C) \ar[r]^(.36){\sim}        & \SMod(\C[C_1,\ldots, C_r]).
}
$$
The horizontal arrows are given by
$$K \mapsto V \tensor_\C K$$
for each module $K$ in the left hand side,
where  $V$ denotes an irreducible supermodule over $\C[C_1,\ldots,C_r]$.

Let $M^{(\la)}\seteq\uqqn M_\la$ be the isotypic component of $M$
that is a highest weight module of highest weight $\la$.
Let $B_{\la} = \set{b^\nu}{\nu =1,2, \ldots, s}$.
Then we have $L_\la/qL_\la=\soplus\nolimits_{\nu=1}^sl_{b^\nu}$.
Hence one can find an $\A[C_1,\cdots C_r]$-submodule $E_\nu$
of $L_\la$ for each $\nu =1,2, \ldots, s$ such that
$$E_\nu / qE_\nu =l_{b^{\nu}} \quad \text{and} \quad L_\la=\soplus_{\nu =1}^s E_\nu.$$
Setting $M^\nu\seteq\uqqn E_\nu$, we have
$$
M^{(\la)} = \soplus_{\nu =1}^s M^\nu.
$$
By Lemma~\ref{le:existence of crystal base for a lattice}, $M^{\nu}$
has a crystal basis
$$(L(M^\nu), B(\la), (l^\nu_b)_{b \in B(\la)})$$
such that $L(M^\nu)_\la=E_\nu$.
Hence their direct sum
$\soplus_{\nu=1}^s(L(M^\nu), B(\la), (l^\nu_b)_{b \in B(\la)})$ is a crystal basis
of $M^{(\la)}$.
Set $L(M^{(\la)})\seteq M^{(\la)}\cap L$.
Since $L(M^{(\la)})_\la=L_\la=\sum_\nu E_\nu=(\sum_\nu L(M^\nu))_\la$,
Lemma~\ref{le:uniqueness of crystal lattice}
implies $L(M^{(\la)})=\soplus\nolimits_{\nu=1}^sL(M^{\nu})$.
In particular, we have $L(M^\nu)=L\cap M^\nu$, and we can regard
$L(M^\nu)/qL(M^{\nu})$ as a subspace of $L/qL$.

\noi
The set
$\set{b \in B(\la)}{ l^\nu_{b} = l_{b'} \quad \text{for some } b' \in B}$
is stable under the Kashiwara operators and contains $b_\la$,
and hence it coincides with $B(\la)$. Therefore
the map $\phi_\nu \cl B(\la) \to B$ given by $l^\nu_b=l_{\phi_\nu(b)}$ ($b\in B(\la)$)
is injective and commutes with the Kashiwara operators.
Its image $C_\nu$ is therefore the connected component of
$b_\nu$ and we obtain
$$L(M^\nu) / q L(M^\nu) = \soplus_{b \in C_{\nu}} l_{b}.$$
%
Write $B=B_1 \sqcup B_2$, where $B_1=\coprod_{\nu=1}^s C_\nu$.
Then $(L(M^{(\la)}), B_1, l_{B_1})$ is a crystal basis of
$M^{(\la)}$ and coincides with the direct sum of the crystal bases
$(L(M^\nu), B(\la), l^\nu_{B(\la)})$ of $M^\nu$.

Let $M = M^{(\la)} \oplus \widetilde M$ be the decomposition as a $\uqqn$-module,
and set $\widetilde L \seteq L \cap \widetilde M$.

Set $S \seteq q^{-1}L(M^{(\la)}) \cap \bl(q^{-1} \widetilde L + L(M^{(\la)})\br)$.
Then $S$ is invariant under the Kashiwara operators and $S_\la =
L(M^{(\la)})_\la$. Hence by Lemma~\ref{le:uniqueness of crystal
lattice}, we have $S = L(M^{(\la)})$, which implies $L(M^{(\la)})
\cap (\widetilde{L}+qL(M^{(\la)}))=q L(M^{(\la)})$.
Hence we obtain
\eq
&&\big(L(M^{(\la)}) / q L(M^{(\la)})\big)
\cap \big(\widetilde L / q\widetilde  L \big) =0
\quad\text{as a subspace of $L/qL$.}
\label{eq:LMtilde}
\eneq
By comparing dimensions, we have
$$L/q L =\big(L(M^{(\la)}) / q L(M^{(\la)}) \big) \oplus \big(\widetilde
L / q \widetilde L\big).$$ Therefore, by Nakayama's lemma, we obtain
\eq
L = L(M^{(\la)}) + \widetilde L = L(M^{(\la)}) \oplus \widetilde L.
\label{eq:LLL}
\eneq

Now, we shall show
\eq
&&\widetilde L / q \widetilde L = \soplus_{b \in B_2} l_{b}.
\eneq
For $b\in B$, let $P_b \cl L/qL \twoheadrightarrow l_b$ be the canonical
projection. Then, for $i=1,\ldots, n-1,\ol1$ satisfying $\te_i b \in B$,
we have a commutative diagram
$$\xymatrix{
L/qL\ar[r]^{{\te_i}}\ar[d]^{P_b}&L/qL\ar[d]^{P_{\te_ib}}\\
l_b\ar[r]^-\sim_-{\te_i}&\;l_{\te_ib}\;.
}
$$
Hence $P_{\te_ib}(\tL/q\tL)=0$ implies $P_b(\tL/q\tL)=0$.
Similarly, $P_{\tf_ib}(\tL/q\tL)=0$ implies
$P_b(\tL/q\tL)=0$.
Hence $S\seteq\{b\in B_1\,;\,P_b(\tL/q\tL)=0\}$
is  stable under the Kashiwara operators. 
Since $S_\la=B_\la$, we obtain $S=B_1$.
Hence $\tL/q\tL\subset \soplus_{b\in B_2}l_b$.
Then \eqref{eq:LLL} implies
the desired result $\tL/q\tL=\soplus_{b\in B_2}l_b$.

Therefore $(\widetilde L, B_2, (l_b)_{b \in B_2})$ is a crystal
basis of $\widetilde M$. Hence
the crystal basis $(L,B, l_B)$ of $M$
is the direct sum of a crystal basis of $\widetilde M$
and crystal bases of $M^\nu$ ($\nu=1,\ldots,s$).
Since $\dim \widetilde M < \dim M$,
our assertion follows by induction on $\dim M$. 
\end{proof}
\end{corollary}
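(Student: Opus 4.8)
The plan is to deduce part (a) directly from the semisimplicity of $\Oint$ together with Theorem~\ref{th:main theorem}, and to prove part (b) by induction on $\dim M$.

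For (a), since $\Oint$ is semisimple I would write $M=\soplus_\nu M_\nu$ with each $M_\nu$ irreducible, hence an irreducible highest weight module $V(\la_\nu)$ for some strict partition $\la_\nu$. By Theorem~\ref{th:main theorem}(a) each $M_\nu$ admits a crystal basis $(L_\nu,B_\nu,l_{B_\nu})$, and the termwise direct sum $\bigl(\soplus_\nu L_\nu,\ \coprod_\nu B_\nu,\ (l_b)_b\bigr)$ satisfies all the axioms of Definition~\ref{def:crystal base}, each of which is preserved under direct sums. This is routine.

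For (b), I would argue by induction on $\dim M$, the case $M=0$ being vacuous. Choose a maximal element $\la$ of $\wt(B)=\wt(M)$ and form the isotypic component $M^{(\la)}\seteq\Uq M_\la$, a highest weight module of highest weight $\la$; write $r=\ell(\la)$. The space $M_\la$ carries the action of the Clifford-type algebra $\F[C_1,\dots,C_r]$ as in Remark~\ref{re:Clifford algebras}, and $L_\la/qL_\la=\soplus_{\nu=1}^{s}l_{b^\nu}$ where $B_\la=\{b^\nu\}$. Using the equivalences of Remark~\ref{re:Clifford algebras} I would choose $\A[C_1,\dots,C_r]$-submodules $E_\nu\subset L_\la$ with $E_\nu/qE_\nu=l_{b^\nu}$ and $L_\la=\soplus_\nu E_\nu$, and set $M^\nu\seteq\Uq E_\nu$. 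Since, by Theorem~\ref{th:main theorem}(a), $V(\la)$ has a connected crystal basis $(L_0,B_0,l_{B_0})$ with $(B_0)_\la$ a singleton, Lemma~\ref{le:existence of crystal base for a lattice} applies and equips each $M^\nu$ with a crystal basis $(L(M^\nu),B(\la),l^\nu_{B(\la)})$ such that $L(M^\nu)_\la=E_\nu$. By Proposition~\ref{prop:uniqueness of lattice of highest weight space of an irreducible module} we have $M^{(\la)}=\soplus_\nu M^\nu$, and since $L(M^{(\la)})\seteq L\cap M^{(\la)}$ has $\la$-weight space $L_\la=\soplus_\nu E_\nu$, Lemma~\ref{le:uniqueness of crystal lattice} forces $L(M^{(\la)})=\soplus_\nu L(M^\nu)$; in particular each $L(M^\nu)/qL(M^\nu)$ embeds in $L/qL$, the assignment $l^\nu_b=l_{\phi_\nu(b)}$ defines an embedding of $\qn$-crystals $\phi_\nu\colon B(\la)\to B$ (its domain of definition is stable under the Kashiwara operators and contains $b_\la$), and its image $C_\nu$ is the connected component of $b^\nu$ in $B$. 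Put $B_1\seteq\coprod_\nu C_\nu$.

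Finally I would decompose $M=M^{(\la)}\oplus\widetilde M$ as a $\Uq$-module, and set $\widetilde L\seteq L\cap\widetilde M$ and $B_2\seteq B\setminus B_1$. The crux — and the step I expect to be the main obstacle — is to show that the crystal lattice splits compatibly with this module decomposition, namely $L=L(M^{(\la)})\oplus\widetilde L$ and $\widetilde L/q\widetilde L=\soplus_{b\in B_2}l_b$, rather than $L$ merely restricting to lattices in the two summands. For the first identity I would introduce $S\seteq q^{-1}L(M^{(\la)})\cap\bigl(q^{-1}\widetilde L+L(M^{(\la)})\bigr)$, observe that $S$ is stable under the Kashiwara operators with $S_\la=L(M^{(\la)})_\la$, invoke Lemma~\ref{le:uniqueness of crystal lattice} to get $S=L(M^{(\la)})$, deduce $\bigl(L(M^{(\la)})/qL(M^{(\la)})\bigr)\cap\bigl(\widetilde L/q\widetilde L\bigr)=0$ inside $L/qL$, and conclude by a dimension count together with Nakayama's lemma. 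For the second, using the canonical projections $P_b\colon L/qL\twoheadrightarrow l_b$ and their commutation with $\tei,\tfi,\teone,\tfone$, the set $\{b\in B_1\,;\,P_b(\widetilde L/q\widetilde L)=0\}$ is stable under the Kashiwara operators and contains $B_\la$, hence equals $B_1$; combined with $L=L(M^{(\la)})\oplus\widetilde L$ this gives $\widetilde L/q\widetilde L=\soplus_{b\in B_2}l_b$. Thus $(\widetilde L,B_2,(l_b)_{b\in B_2})$ is a crystal basis of $\widetilde M$, and since $\dim\widetilde M<\dim M$ the induction hypothesis decomposes $\widetilde M$ as in the statement; adjoining the summands $M^\nu$ — each a highest weight module of highest weight $\la$ with $C_\nu\simeq B(\la)$ — and indexing everything by $A=\{1,\dots,s\}\sqcup A'$ (with $A'$ the index set for $\widetilde M$) yields the required decomposition of $M$, $L$ and $B$.
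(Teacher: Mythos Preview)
Your proposal is correct and follows essentially the same approach as the paper's proof: the same induction on $\dim M$, the same splitting of $L_\la$ into Clifford submodules $E_\nu$ via Remark~\ref{re:Clifford algebras}, the same use of Lemmas~\ref{le:uniqueness of crystal lattice} and~\ref{le:existence of crystal base for a lattice}, the same auxiliary $S=q^{-1}L(M^{(\la)})\cap\bigl(q^{-1}\widetilde L+L(M^{(\la)})\bigr)$ combined with Nakayama to split $L$, and the same projection argument with the $P_b$ to identify $\widetilde L/q\widetilde L$. The only cosmetic difference is that the paper spells out the Clifford-module commutative diagrams explicitly before lifting the decomposition of $L_\la/qL_\la$, whereas you simply cite Remark~\ref{re:Clifford algebras}.
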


\vskip 1cm


\begin{thebibliography}{99}

\bibitem{BKK}
G. Benkart, S.-J. Kang, M. Kashiwara,
\newblock {\em Crystal bases for the quantum superalgebra
$U_q(\mathfrak{gl}(m,n))$},
\newblock   J. Amer. Math. Soc. {\bf 13}  (2000), 293--331.

\bibitem{GJKK} D. Grantcharov, J. H. Jung, S.-J. Kang. M. Kim,
\newblock {\it Highest weight modules over quantum queer superalgebra
$U_q({\mathfrak q}(n))$}, \newblock Commun. Math. Phys. {\bf 296}
(2010), 827--860.

\bibitem{HK2002}
J. Hong, S.-J. Kang, \emph{Introduction to Quantum Groups and
Crystal Bases}, Graduate Studies in Mathematics \textbf{42},
American Mathematical Society, 2002.


\bibitem{JMMO}
M.~Jimbo, K.~C. Misra, T.~Miwa, M.~Okado, \emph{Combinatorics of
representations of ${U}_q(\widehat{\mathfrak{sl}}(n))$ at $q=0$},
Commun. Math. Phys. \textbf{136} (1991), 543--566.

\bibitem{K} V. Kac,
\newblock{\it Lie superalgebras}, Adv. Math. {\bf 26} (1977), 8--96.


\bibitem{Ka2003}
S.-J. Kang, \emph{Crystal bases for quantum affine algebras and
combinatorics of Young walls}, Proc. Lond. Math. Soc. {\bf (3) 86}
(2003), 29--69.


\bibitem{KMN1}
S.-J. Kang, M.~Kashiwara, K.~C. Misra, T.~Miwa, T.~Nakashima,
A.~Nakayashiki, \emph{Affine crystals and vertex models}, Int. J.
Mod. Phys. A. \textbf{Suppl. 1A} (1992), 449--484.


\bibitem{KMN2}
\bysame, \emph{Perfect crystals of quantum affine {L}ie algebras},
Duke Math. J. \textbf{68} (1992), 499--607.

\bibitem{KK08}
S.-J. Kang, J.-H. Kwon,
\newblock{\it Fock space representations
of quantum affine algebras and generalized Lascoux-Leclerc-Thibon
algorithm}, J. Korean Math. Soc. {\bf 45} (2008), 1135-1202.

\bibitem{Kas90} M. Kashiwara,
\newblock {\it Crystalizing the $q$-analogue
of universal enveloping algebras},
\newblock Commun. Math. Phys.  {\bf 133} (1990) 249--260.

\bibitem{Kas91}
\bysame
\newblock {\it On crystal bases of the $q$-analogue of
universal enveloping algebras},
\newblock Duke Math. J. {\bf 63} (1991) 465--516.

\bibitem{Kas93}
\bysame,
\newblock {\it Crystal base and Littelmann's refined Demazure character formula},
\newblock Duke Math. J. \textbf{71} (1993) 839--858.

\bibitem{Kas94}
\bysame, \emph{Crystal bases of modified quantized enveloping
algebra}, Duke Math. J. \textbf{73} (1994), 383--413.

\bibitem{KN} M. Kashiwara, T. Nakashima,
\newblock {\it Crystal graphs for representations of the $q$-analogue of classical Lie algebras},
\newblock  J. Algebra {\bf 165} (1994), 295--345.

\bibitem{KS97}
M. Kashiwara and Y. Saito, {\em Geometric construction of crystal
bases}, Duke Math. J. {\bf 89} (1997), 9--36.

\bibitem{Li1}
P.~Littelmann, \emph{A Littlewood-Richardson rule for
symmetrizable Kac-Moody algebras}, Invent. Math. \textbf{116}
(1994), 329--346.

\bibitem{Li2}
P.~Littelmann,  \emph{Paths and root operators in representation
theory}, Ann. of Math. (2) \textbf{142} (1995), no. 3, 499--525.

\bibitem{Lus90}
G.~Lusztig, \emph{Canonical bases arising from quantized
enveloping algebras}, J. Amer. Math. Soc. \textbf{3} (1990), no.
2, 447--498.

\bibitem{Lus91}
G.~Lusztig, \emph{Quivers, perverse sheaves, and quantized
enveloping algebras}, J. Amer. Math. Soc. \textbf{4} (1991), no.
2, 365--421.


\bibitem{MM}
K.~Misra, T.~Miwa, \emph{Crystal base for the basic representation
of ${U}_q(\widehat{\mathfrak{sl}}(n))$}, Commun. Math. Phys.
\textbf{134} (1990), 79--88.


\bibitem{Naka02}
H.~Nakajima,\emph{Quiver varieties and finite-dimensional
representations of quantum affine algebras}, J. Amer. Math. Soc.
\textbf{14} (2001), no. 1, 145--238.


\bibitem{N93}
T.~Nakashima, \newblock{\it Crystal base and a generalization of the
Littlewood-Richardson rule for the classical Lie algebras}, Commun.
Math. Phys. {\bf 154} (1993), 2154--243.


\bibitem{Ol} G. Olshanki, \newblock {\it Quantized universal enveloping
superalgebra of type $Q$ and a super-extension of the Hecke
algebra}, \newblock Lett. Math. Phys. {\bf 24} (1992), 93--102.

\bibitem{PS} I. Penkov, V. Serganova,
\newblock {\it Characters of finite-dimensional irreducible $q(n)$-modules},
\newblock Lett. Math. Phys. {\bf 40} (1997) 147--158.

\bibitem{Sai}
Y. Saito, \emph{Crystal bases and quiver varieties}, Math. Ann.
\textbf{324} (2002), no. 4, 675--688.

\bibitem{Ser} A. Sergeev, \newblock {\it The tensor algebra of the tautological representation as a module over the Lie superalgebras $\mathfrak{gl}(n,m)$ and $Q(n)$}, \newblock Mat. Sb. {\bf 123} (1984) 422--430 (in Russian).


\end{thebibliography}
\end{document}